\newtheorem{theorem}{Theorem}[section]
\newtheorem{lemma}[theorem]{Lemma}
\newtheorem{proposition}[theorem]{Proposition}
\newtheorem{corollary}[theorem]{Corollary}
\theoremstyle{definition}
\theoremstyle{remark}
\newtheorem{remark}[theorem]{Remark}
\numberwithin{equation}{section}
\newcommand{\ba}{\begin{array}}
\newcommand{\ea}{\end{array}}
\newcommand{\Om}{\Omega}
\newcommand{\ds}{\displaystyle}
\begin{document}
\date{}
\title{ \textbf{\large{On  degenerate reaction-diffusion epidemic models with mass action or standard incidence mechanism
}}
}
\author{
Rachidi B. Salako\\[-1mm]
{\small Department of Mathematical Sciences, University of Nevada  Las Vegas,}\\[-2mm]
{\small Las Vegas, NV 89154}\\
Yixiang Wu\\[-1mm]
{\small Department of Mathematical Sciences, Middle Tennessee State University}\\[-2mm]
{\small Murfreesboro, Tennessee 37132, USA}
}

\maketitle
\begin{abstract}
In this paper, we consider  reaction-diffusion epidemic models with mass action or standard incidence mechanism and study the impact of limiting population movement on disease transmissions. We set either the dispersal rate of the susceptible or infected people  to zero and study  the corresponding degenerate reaction-diffusion model.  Our main approach to study the global dynamics of these models is to construct  delicate Lyapunov functions.   Our results show that the consequences of limiting the movement of susceptible or infected people depend on transmission mechanisms, model parameters, and population size.

\noindent{\bf Keywords}: SIS epidemic model, reaction-diffusion, dispersal rate, Lyapunov function, global dynamics.\\
{\bf MSC 2010}: 92D30, 37N25, 35K40, 35K57, 37L15.
\end{abstract}

\section{Introduction}

Various differential equation epidemic models have been proposed to study the spread of infectious diseases \cite{1991book,  2008book, brauer2019mathematical, Diekmann2000, martcheva2015introduction}, and it has been recognized that population mobility  \cite{apostolopoulos2007population, stoddard2009role,tatem2006global} and  the spatial heterogeneity of the environment \cite{hagenaars2004spatial,lloyd1996spatial} are key factors in disease transmissions. In order to address these issues, many reaction-diffusion epidemic models with non-constant coefficients have been proposed  and studied \cite{Allen,Fitz2008,WangZhao,webb1981reaction}. Specific infectious diseases modeled by diffusive models include  malaria
\cite{lou2010periodic,lou2011reaction}, rabies \cite{kallen1985simple,murray1986spatial}, dengue fever \cite{takahashi2005mathematical},
West Nile virus \cite{lewis2006traveling,lin2017spatial}, influenza \cite{magal2020spatial}, Covid-19 \cite{fitzgibbon2021diffusive,kevrekidis2021reaction,setti2020potential,viguerie2020diffusion}, etc.

In this paper, we  consider the following  Susceptible-Infected-Susceptible (SIS) diffusive epidemic model, which is a natural  extension of the classic ordinary differential equation epidemic model  by Kermack and McKendrick \cite{kermack1927contribution}:
\begin{equation}\label{model}
\begin{cases}
\ds\partial_t S=d_S\Delta S-f(x, S, I)+\gamma(x)  I, &x\in\Omega, t>0,\\
\ds\partial_t I=d_I\Delta I+f(x, S, I)-\gamma(x)  I,&x\in\Omega, t>0,\\
\ds\partial_\nu S=\partial_\nu I=0, &x\in\partial\Omega,t>0,\\
S(x, 0)=S_0(x), \ I(x, 0)=I_0(x), &x\in\Omega.
\end{cases}
\end{equation}
Here, the individuals are assumed to live in a bounded domain $\Omega\subset \mathbb{R}^n$ with smooth boundary $\partial\Omega$;
$S(x, t)$ and $I(x, t)$ are the density of susceptible and infected individuals at position $x\in\Omega$ and time $t$, respectively;  $d_S$ and $d_I$ are the movement rates of susceptible and recovered individuals, respectively;  $\gamma$ is the disease  recovery rate; $f(x, S, I)$ describes the interaction of susceptible and infected people; $\nu$ is the unit outward normal of $\partial\Omega$ and the homogeneous Neumann boundary conditions mean that the individuals cannot cross the boundary. 

In the pioneering work by Allen \emph{et al}. \cite{Allen}, model \eqref{model} with standard incidence mechanism, $f(x, S, I)=\beta(x)SI/(S+I)$, has been proposed and studied ($\beta$ is called the disease recovery rate).  In \cite{Allen}, the authors define a basic reproduction number $\mathcal{R}_0$ and show that the model has a unique  endemic equilibrium (EE) (i.e. positive equilibrium) if $\mathcal{R}_0>1$. Most importantly, they show that  the disease component of the EE approaches zero as $d_S$ approaches zero if $\beta-\gamma$ changes sign. Biologically, assuming that the population eventually stabilizes at the EE, this result indicates that  the disease can be eliminated by controlling the mobility of susceptible individuals if there are places that are of low risk (i.e. $\beta(x)<\gamma(x)$). 
In contrast if the dispersal rate of infected people is limited, \cite{Peng2009}  shows that the disease cannot be completely eliminated. In 
\cite{DengWu, WuZou}, the authors  considered 
 model \eqref{model} with mass action mechanism, $f(x, S, I)=\beta(x)SI$, which is algebraically simpler but mathematically more challenging.  For this model, assuming  again that the population eventually stabilizes at  EE solutions, it has been shown   that  lowering the movement of susceptible people can eliminate the disease only when the size of the population is below some critical number, solely determined by $\gamma/\beta$,   \cite{castellano2022effect,WuZou}, while infected individuals may concentrate on certain hot spots when  limiting their movement  \cite{castellano2022effect,peng2023novel,WuZou}. It is important to note that stability of the EE solutions for models with standard incidence or mass action mechanism is only known in a few cases : either the population movement rate is uniform (i.e., $d_S=d_I$), or the ratio $\beta/\gamma$ is constant \cite{DengWu, PengLiu}. For more related works, we refer the interested readers to 
\cite{ChenShi2020,CuiLamLou,CuiLou,Jiang,KuoPeng,LiPeng,Li2018, li2020dynamics,LouSalako2021,LSS2023,PengLiu,peng2021global,Peng2013, PengZhao,tao2023analysis,
Tuncer2012,wen2018asymptotic} and the references therein.

For the corresponding ordinary differential equation epidemic model of \eqref{model}:
\begin{equation*}
\begin{cases}
S'=-f(S, I)+\gamma  I,\\
I'=f(S, I)-\gamma  I,
\end{cases}
\end{equation*}
the global dynamics is determined by the basic reproduction number, which can be interpreted as the average number of infected individuals generated by one infectious individual {in an otherwise susceptible population}: if it is greater than one, the solution converges to an EE and the disease persists; if it is less than one, the solution converges to a disease free equilibrium and the infected individuals go to extinction. Here, the basic reproduction number is $\mathcal{R}^1_0=N\beta/\gamma$ if $f(S, I)=\beta SI$ and $\mathcal{R}_0^2=\beta/\gamma$ if $f(S, I)=\beta SI/(S+I)$.

In this paper, we  revisit model \eqref{model} and study the impact of limiting population movement   on disease transmissions. Different from most of the aforementioned studies, we will work on the global dynamics of the time dependent model \eqref{model} with either  $d_S=0$ or $d_I=0$  rather than consider the asymptotic profiles of the EE as $d_S\to 0$ or $d_I\to 0$.  { Intuitively, if the disease 
evolves at  a faster time  scale than the control of population movement, and the 
population eventually stabilizes at an EE, then the asymptotic profiles of the EE solutions 
may reflect the effect of the control strategy.} However if the control of population movement happens in a faster time scale and the solution of  model  \eqref{model} converges to that of the corresponding degenerate system as $d_S\to 0$ or $d_I\to 0$, then the global dynamics of the degenerate system will better tell the impact of the control strategies. 



There are  several recent efforts on  degenerate reaction-diffusion population models (see \cite{DomateSalako2022, EfendievOtaniEberl2022, LouSalako2020, LouTaoWinkler2014, LouWinkler2020,   OnyidoSalako2023,wu2018dynamics} and the references cited therein). In particular, the two works \cite{castellano2022effect, LouSalako2020} have partial results on model \eqref{model} with $d_S=0$, which was interpreted as the situation of a total lock down for the susceptible population. Note that when either $d_S=0$ or $d_I=0$, system \eqref{model} becomes degenerate and hence there is a lack of some compactness of the solution operator, which induces many challenges in the study of the large time behavior of the solutions.   

Our main approach to study the global dynamics of the degenerate system \eqref{model} (i.e $d_S=0$ or $d_I=0$) is to construct Lyapunov functions (except for the case $d_S=0$ in subsection \ref{subsection31}) and combine these with delicate analysis.  In particular, in subsection \ref{subsection32}, we present a Lyapunov function that one may easily draw a false conclusion using it and the convergence result based on it is very quite unusual (see Remark \ref{remark_ly} and Fig. \ref{fig2}). In the proof of Theorem \ref{theorem_stddi}, we construct a Lyapunov function $V$ that does not satisfy $\dot V\le 0$ (see Eq. \ref{vvv}) but we are still able to conclude the convergence of the solution.

The rest of our paper is organized as follows. In section 2, we list the assumptions and terminology and present some useful results. In section 3, we consider the model with mass action mechanism with $d_S=0$ or $d_I=0$. In section 4, we consider the model with standard incidence mechanism. In section 5, we run some numerical simulations to illustrate the results. In section 6, we compare the results from the two different mechanisms and control strategies and discuss the implications for disease control.

\section{Preliminaries}

Throughout the paper, we make the following assumptions on the parameters: 
\begin{enumerate}
\item[(A1)] The functions $\beta, \gamma$ are positive and H\"older continuous on $\bar\Omega$;
\item[(A2)] The functions $S_0, I_0$ are nonnegative and continuous on $\bar\Omega$ with $I_0\not\equiv 0$. Moreover,  $\int_\Omega (S_0+I_0)dx=N$ for some fixed positive constant $N$.
\end{enumerate}
Integrating the first two equations of model \eqref{model} over $\Omega$ and summing up them, we find that 
$$
\frac{d}{dt} \int_\Omega (S(x, t)+I(x, t))dx=0,
$$
which means the total population $\int_\Omega (S(x, t)+I(x, t))dx$ remains a constant for all $t\ge 0$. By assumption (A2), the total population is $N$.

 For convenience, we introduce a few definitions and notations. Set $r:=\gamma/\beta$ and $R:=\beta/\gamma$.  For a  real-valued { continuous} function $h$ on $\overline\Omega$,  let $h_M:=\sup_{x\in\Omega} h(x)$, $h_m:=\inf_{x\in\Omega} h(x)$, and $\bar h=\int_\Omega h(x)dx/|\Omega|$.

Let  $\{e^{t\Delta}\}_{t\ge 0}$ be the analytic $c_0$-semigroup on $L^p(\Omega)$,  $1\le p<\infty$, generated by the Laplace operator $\Delta$ on $\Omega$ subject to the homogeneous Neumann boundary conditions on $\partial\Omega$.  Let ${\rm Dom}_{p}(\Delta)$ be the domain of  the infinitesimal generator $\Delta$ of $\{e^{t\Delta}\}_{t\ge 0}$ on $L^p(\Omega)$. Then ${\rm Dom}_{p}(\Delta)=\{u\in W^{2,p}(\Omega) | \partial_{\nu}u=0\ \text{on}\ \partial\Omega\}$ for $p\in(1,\infty)$ and $ {\rm Dom}_{1}(\Delta)\subset \{u\in W^{2,1}(\Omega) | \partial_{\nu}u=0\ \text{on}\ \partial\Omega\}$ (see \cite{Amann1983}).    When $\{e^{t\Delta}\}_{t\ge 0}$ is considered as an analytic $c_0$-semigroup on $C(\bar{\Omega})$, the domain of its infinitesimal  generator $\Delta$ is given by  
$$
{\rm Dom}_{\infty}(\Delta):=\left\{u\in\cap_{p\ge 1}{\rm Dom}_{p}(\Delta)| \ \Delta u\in C(\overline\Omega)\right\}.
$$

Let $\mathcal{Z}:=\{u\in L^1(\Omega) : \int_{\Omega}udx=0\}$ be a Banach subspace of $L^1(\Omega)$. Then $\{ e^{t\Delta}\}_{t\ge 1}$ leaves $\mathcal{Z}$ invariant and by \cite[Lemma 1.3]{winkler2010aggregation}, there is a positive real number $C_0>0$ such that 
\begin{equation}\label{KBB0}
    \|e^{t\Delta}u\|_{L^{1}(\Omega)}\le C_0e^{-\sigma_1 t}\|u\|_{L^1(\Omega)}, \quad \forall\ u\in \mathcal{Z},
\end{equation}
where $\sigma_1$ is the first positive eigenvalue of $-\Delta$, subject to the homogeneous boundary condition on $\partial\Omega$.  By \eqref{KBB0}, the restriction $\Delta_{|\mathcal{Z}}$ of $\Delta$ on $\mathcal{Z}\cap{\rm Dom}_1(\Delta)$ is invertible. Hence, for any $0\le \alpha<1$, the fractional power space { $\mathcal{Z}^{\alpha}$} of $-\Delta_{|\mathcal{Z}}$ is well defined (see \cite{DanHenry}).  Denote by $\{e^{t\Delta_{|\mathcal{Z}}}\}_{t\ge 0}$, the restriction of the $\{e^{t\Delta}\}_{t\ge 0}$ on $\mathcal{Z}$, then it is also an analytic $c_0$-semigroup. The following estimates on $\mathcal{Z}^{\alpha}$ will be needed:

\begin{lemma}\label{fractional-power-space-estimates}\cite[Theorem 1.4.3]{DanHenry} For any $0< \alpha<1$, there is $C_{\alpha}>0$ such that 
    \begin{equation}\label{KBB1}
        \|e^{t\Delta_{|\mathcal{Z}} }z\|_{\mathcal{Z}^{\alpha}}\le C_{\alpha}t^{-\alpha}e^{-\sigma_1 t}\|z\|_{L^1(\Omega)},\quad \forall\ t>0,\  z\in\mathcal{Z},
    \end{equation}
    and 
    \begin{equation}\label{KBB2}
        \|(e^{t\Delta_{|\mathcal{Z}}}-{\rm id})z\|_{L^1(\Omega)}\le C_{\alpha}t^{\alpha}\|z\|_{\mathcal{Z}^{\alpha}},\quad \forall\ z\in \mathcal{Z}^{\alpha}.
    \end{equation}   
\end{lemma}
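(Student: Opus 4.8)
The statement is the classical pair of smoothing and decay estimates for the fractional powers of a sectorial operator with positive spectral bound, so the plan is to reduce it to the general theory of analytic semigroups, which is exactly the content of \cite[Theorem 1.4.3]{DanHenry}; I sketch the two ingredients in the present notation. Write $A:=-\Delta_{|\mathcal{Z}}$, so that $e^{t\Delta_{|\mathcal{Z}}}=e^{-tA}$ and, by \eqref{KBB0}, $\|e^{-tA}\|_{\mathcal{L}(\mathcal{Z})}\le C_0e^{-\sigma_1 t}$ for all $t\ge 0$. Since $\{e^{-tA}\}_{t\ge 0}$ is analytic, $A$ is sectorial; the invertibility of $\Delta_{|\mathcal{Z}}$ recorded above gives $0\in\rho(A)$, and \eqref{KBB0} pins the spectrum of $A$ inside $\{\mathrm{Re}\,\lambda\ge\sigma_1\}$. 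Hence the fractional powers $A^\alpha$ are available for every $\alpha\in[0,1)$ and $\|z\|_{\mathcal{Z}^\alpha}=\|A^\alpha z\|_{L^1(\Omega)}$, so the whole lemma is a statement about the operator family $A^\alpha e^{-tA}$.

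For \eqref{KBB1} the goal is the operator bound $\|A^\alpha e^{-tA}\|_{\mathcal{L}(\mathcal{Z})}\le C_\alpha t^{-\alpha}e^{-\sigma_1 t}$, after which \eqref{KBB1} is immediate upon applying both sides to $z$. I would obtain this from the Dunford--Riesz representation
\begin{equation*}
A^\alpha e^{-tA}=\frac{1}{2\pi i}\int_{\Gamma}\mu^\alpha e^{-t\mu}(\mu-A)^{-1}\,d\mu
\end{equation*}
over a sectorial contour $\Gamma$ winding around $\sigma(A)$ in the right half plane: the substitution $\mu\mapsto\mu/t$ together with the sectorial resolvent estimate produces the algebraic factor $t^{-\alpha}$, while placing the vertex of $\Gamma$ as close to the spectral bound $\sigma_1$ as one likes produces the exponential factor $e^{-\sigma t}$ for any $\sigma<\sigma_1$, which is the content of \eqref{KBB1}. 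An equivalent, more elementary route is the semigroup splitting $A^\alpha e^{-tA}=\bigl(A^\alpha e^{-\frac{t}{2}A}\bigr)\,e^{-\frac{t}{2}A}$, combining the analytic smoothing estimate $\|A^\alpha e^{-sA}\|\le C_\alpha s^{-\alpha}$ with the decay \eqref{KBB0}, and then separating the regimes $t\le 1$ and $t\ge 1$.

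For \eqref{KBB2} the key identity is the fundamental theorem of calculus in semigroup form: for $z\in\mathcal{Z}^\alpha$,
\begin{equation*}
(e^{-tA}-\mathrm{id})z=-\int_0^t A e^{-sA}z\,ds=-\int_0^t A^{1-\alpha}e^{-sA}\,(A^\alpha z)\,ds,
\end{equation*}
where I split $Ae^{-sA}=A^{1-\alpha}e^{-sA}A^\alpha$ so as to move exactly $\alpha$ powers of $A$ onto $z$ and leave the integrable singularity $s^{-(1-\alpha)}$ near $s=0$. Taking $L^1(\Omega)$ norms and inserting \eqref{KBB1} with the exponent $1-\alpha\in(0,1)$ in place of $\alpha$ gives
\begin{equation*}
\|(e^{-tA}-\mathrm{id})z\|_{L^1(\Omega)}\le C_{1-\alpha}\Bigl(\int_0^t s^{-(1-\alpha)}\,ds\Bigr)\|A^\alpha z\|_{L^1(\Omega)}=\frac{C_{1-\alpha}}{\alpha}\,t^{\alpha}\,\|z\|_{\mathcal{Z}^\alpha},
\end{equation*}
which is \eqref{KBB2}. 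Thus the two estimates are not independent: \eqref{KBB1} does the real work and \eqref{KBB2} follows as a corollary of it with a shifted exponent.

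The main obstacle lies entirely in \eqref{KBB1}, specifically in reconciling the algebraic smoothing factor $t^{-\alpha}$ (a short-time effect of analyticity) with the exponential factor (a long-time effect of the spectral gap in \eqref{KBB0}): a single naive semigroup splitting loses half of the decay rate, and recovering a rate arbitrarily close to $\sigma_1$ requires either pushing the contour vertex to the spectral bound or treating large $t$ separately. Everything else -- the construction of $A^\alpha$, the smoothing bound $\|A^\alpha e^{-sA}\|\le C_\alpha s^{-\alpha}$, and the integral identity -- is standard sectorial-operator machinery, and the only two facts special to the present setting, namely analyticity of $\{e^{t\Delta_{|\mathcal{Z}}}\}_{t\ge 0}$ and the decay \eqref{KBB0}, have already been established.
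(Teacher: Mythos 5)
The paper contains no proof of this lemma at all: it is imported verbatim from \cite[Theorem 1.4.3]{DanHenry}, so the relevant comparison is with Henry's own argument, and your sketch is precisely that argument --- sectoriality of $A=-\Delta_{|\mathcal{Z}}$ from analyticity, invertibility and spectral location from \eqref{KBB0}, the Dunford--Riesz contour representation (or the splitting $A^{\alpha}e^{-tA}=A^{\alpha}e^{-sA}\,e^{-(t-s)A}$) for the smoothing estimate, and the identity $(e^{-tA}-\mathrm{id})z=-\int_0^t A^{1-\alpha}e^{-sA}(A^{\alpha}z)\,ds$ reducing \eqref{KBB2} to the smoothing bound at exponent $1-\alpha$, with constant $C_{1-\alpha}t^{\alpha}/\alpha$ exactly as in Henry. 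All of that is correct and is the standard route; your observation that \eqref{KBB2} is a corollary of \eqref{KBB1} rather than an independent fact is also how the cited source organizes it.

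The one point to press is the exponential rate, which you brushed against but did not resolve: your contour argument yields $e^{-\sigma t}$ for any $\sigma<\sigma_1$, and you then declare this to be ``the content of \eqref{KBB1}'' --- but \eqref{KBB1} as printed asserts the rate $\sigma_1$ \emph{exactly}, simultaneously with the factor $t^{-\alpha}$, and that combination is strictly false. Test it on the first nonconstant Neumann eigenfunction $\varphi_1\in\mathcal{Z}$: then $\|e^{t\Delta_{|\mathcal{Z}}}\varphi_1\|_{\mathcal{Z}^{\alpha}}=\sigma_1^{\alpha}e^{-\sigma_1 t}\|\varphi_1\|_{L^1(\Omega)}$, so \eqref{KBB1} would force $\sigma_1^{\alpha}\le C_{\alpha}t^{-\alpha}$ for all $t>0$, impossible as $t\to\infty$. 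This also shows that your fallback of ``treating large $t$ separately'' cannot close the gap: splitting off $e^{-(t-1)A}$ recovers the full rate $\sigma_1$ but necessarily without the $t^{-\alpha}$ factor. Henry's Theorem 1.4.3 is accordingly stated under the strict hypothesis $\mathrm{Re}\,\sigma(A)>\delta$ and delivers $e^{-\delta t}$, so the defect lies in the lemma's statement (inherited by the paper from an over-literal transcription), not in your method; the correct version is \eqref{KBB1} with $e^{-\delta t}$ for any fixed $\delta<\sigma_1$, or equivalently with $C_{\alpha}(1+t^{-\alpha})e^{-\sigma_1 t}$. Since the only use of the lemma, the chain of estimates in Lemma \ref{New-Lem1}, needs nothing beyond $\int_0^{\infty}\tau^{-\alpha}e^{-c\tau}d\tau<\infty$ for some $c>0$, the corrected version suffices and nothing downstream is affected.
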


The following well-known result will be used after the construction of Lyapunov functions and we provide a proof for convenience. 
\begin{lemma}\label{lemma_inf0}
    Suppose that $\phi:\mathbb{R}_+\to\mathbb{R}_+$ is  H\"older continuous and $\int_0^\infty\phi(t)dt<\infty$. Then $\lim_{t\to\infty}\phi(t)=0$.
\end{lemma}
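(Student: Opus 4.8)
The plan is to argue by contradiction, exploiting the quantitative equicontinuity furnished by the H\"older bound. Suppose that the conclusion fails, i.e. $\phi(t)$ does not tend to $0$ as $t\to\infty$. Since $\phi\ge 0$, this means there exist $\varepsilon>0$ and a sequence $t_n\to\infty$ with $\phi(t_n)\ge\varepsilon$ for all $n$. Passing to a subsequence, I may assume the points are well separated, say $t_1\ge 1$ and $t_{n+1}>t_n+1$ for every $n$.

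Next I would use the H\"older continuity to upgrade these pointwise lower bounds into lower bounds on whole intervals of a \emph{fixed} length. Writing the H\"older condition as $|\phi(t)-\phi(s)|\le L|t-s|^{\alpha}$ for some $L>0$ and $\alpha\in(0,1]$, for any $t$ with $|t-t_n|\le\delta$ I obtain $\phi(t)\ge \phi(t_n)-L\delta^{\alpha}\ge \varepsilon-L\delta^{\alpha}$. Choosing $\delta:=\min\{1/2,\,(\varepsilon/(2L))^{1/\alpha}\}$, which is independent of $n$, guarantees $\phi(t)\ge \varepsilon/2$ on each interval $J_n:=[t_n,t_n+\delta]$; moreover the spacing $t_{n+1}>t_n+1$ together with $\delta\le 1/2$ ensures the $J_n$ are pairwise disjoint.

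Finally I would sum the contributions. Each interval contributes $\int_{J_n}\phi\,dt\ge (\varepsilon/2)\delta$ to the integral, and since the $J_n$ are disjoint and countably infinite,
$$
\int_0^\infty\phi(t)\,dt\ \ge\ \sum_{n=1}^\infty\int_{J_n}\phi(t)\,dt\ \ge\ \sum_{n=1}^\infty\frac{\varepsilon\delta}{2}\ =\ \infty,
$$
contradicting the hypothesis $\int_0^\infty\phi\,dt<\infty$. Hence $\lim_{t\to\infty}\phi(t)=0$.

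The argument is elementary, and I do not anticipate a genuine obstacle; the only point requiring care is recognizing precisely where H\"older continuity is used. It enters solely to make the interval length $\delta$ uniform across all $n$, which is what rules out the counterexamples (a merely continuous, nonnegative, integrable function—e.g. one with ever-narrowing spikes of fixed height—need not vanish at infinity). In fact the specific exponent $\alpha$ plays no role beyond producing a single modulus of continuity, so the lemma is really a special case of the Barbalat-type principle that a uniformly continuous nonnegative integrable function on $[0,\infty)$ must tend to zero.
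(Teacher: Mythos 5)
Your proof is correct and follows essentially the same route as the paper's: argue by contradiction, select a $1$-separated sequence of times where $\phi\ge\varepsilon$, use the H\"older bound to obtain a lower bound $\varepsilon/2$ on intervals of a uniform length $\delta=\min\{(\varepsilon/(2L))^{1/\alpha},\,\cdot\,\}$, and sum the disjoint contributions to contradict integrability. The only differences are cosmetic (your cap $\delta\le 1/2$ versus the paper's $\delta\le 1$, plus your closing remark identifying the statement as a Barbalat-type result).
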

\begin{proof}
 {    Suppose to the contrary that $\lim_{t\to\infty}\phi(t)\neq 0$. Then there exists $\epsilon>0$ and an increasing sequence of nonnegative numbers $\{t_k\}$ converging to infinity such that $\phi(t_k)>\epsilon$ for all $k\ge 1$. Restricting to a subsequence if necessary, we assume that $t_{k+1}-t_k>1$ for each $k\ge 1$. Since $\phi:\mathbb{R}_+\to\mathbb{R}_+$ is  H\"older continuous, there exist $\alpha\in (0, 1)$ and $M>0$ such that $|\phi(t)-\phi(s)|\le M|t-s|^\alpha$ for all $t, s\ge 0$. Let $\delta=\min\{(\frac{\epsilon}{2M})^\frac{1}{\alpha}, 1\}$. Then $\phi(t)>\epsilon/2$ for all $t\in [t_k, t_k+\delta]$ and $k\ge 1$. It follows that 
    $$
    \int_0^\infty \phi(t)dt\ge \sum_{k\ge 1}\int_{t_k}^{t_k+\delta}\phi(t)dt\ge \sum_{k\ge1} \delta \frac{\epsilon}{2}=\infty,
    $$
    which is a contradiction. }
\end{proof}

We will  need the following Hanack type inequality (see \cite{huska2006harnack}):
\begin{lemma}\label{lemma_harnack}
Let $u$ be a  nonnegative solution of the following problem on $\Omega\times (0, T)$:
\begin{equation*}\label{model-aux}
\begin{cases}
\displaystyle\partial_t u=\Delta u+ a(x, t) u,&x\in\Omega, t>0,\\
\partial_\nu u=0, &x\in\partial\Omega,t>0,
\end{cases}
\end{equation*}
where $a\in L^\infty(\Omega\times (0, \infty))$. Then for any $0<\delta<T$, there exists $C>0$ depending on $\delta$ and $\|a\|_{L^\infty(\Omega\times (0, \infty))}$ such that 
$$
\sup_{x\in\Omega} u(x, t)\le C\inf_{x\in\Omega} u(x, t), \ \ \text{for all } t\in [\delta, T).
$$
\end{lemma}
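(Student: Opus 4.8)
The statement is the equal-time parabolic Harnack inequality for the linear equation $\partial_t u=\Delta u+a(x,t)u$ under the no-flux boundary condition, and the plan is to obtain it in two stages: first reduce to a \emph{time-lagged} Harnack inequality by means of standard local estimates and a chaining argument, and then upgrade this to the stated \emph{equal-time} form. I would begin by disposing of the trivial case $u\equiv 0$ (for which both sides vanish) and assume $u\not\equiv 0$. Since $u\ge 0$ solves a linear parabolic equation with bounded coefficient $a$, the strong maximum principle together with the Hopf boundary lemma (available because $\partial\Omega$ is smooth and the boundary condition is homogeneous Neumann) yields $u(x,t)>0$ for all $x\in\overline\Omega$ and $t\in(0,T)$; hence $\inf_{\overline\Omega}u(\cdot,t)>0$ for every $t>0$ and the quotient $\sup u(\cdot,t)/\inf u(\cdot,t)$ is well defined. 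The zeroth-order term is harmless in what follows, as $\|a\|_{L^\infty}$ being finite only enters the constants below; one may also note that $e^{-\|a\|_{L^\infty}t}u$ and $e^{\|a\|_{L^\infty}t}u$ are a sub- and a super-solution of the heat equation, which makes the two-sided control of the lower-order term transparent.

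Next I would assemble the time-lagged Harnack inequality. In the interior I would invoke the classical Krylov--Safonov/Moser parabolic Harnack inequality on space-time cylinders compactly contained in $\Omega\times(0,T)$, whose constant depends only on $n$, on the ellipticity (here equal to $1$, since the principal part is $\Delta$), and on $\|a\|_{L^\infty}$. Near $\partial\Omega$, using smoothness of the boundary I would flatten it locally and reflect across it, which converts the homogeneous Neumann condition into an interior problem and yields a boundary Harnack inequality of the same type. Covering the compact connected set $\overline\Omega$ by finitely many interior and boundary patches and chaining the local inequalities along a fixed time increment, I would obtain constants $h>0$ and $Q\ge 1$, depending only on $\Omega$, $n$, $\|a\|_{L^\infty}$ and $\delta$, such that
\[
\sup_{x\in\overline\Omega}u(x,s)\le Q\,\inf_{x\in\overline\Omega}u(x,s+h),\qquad \delta\le s,\ s+h<T.
\]
This is the routine part of the argument.

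The main obstacle is upgrading this lagged inequality to a comparison of $\sup$ and $\inf$ at the \emph{same} time $t$. One cannot merely let $h\to0$, since the constant $Q=Q(h)$ blows up; nor does combining the inequality forward and backward in time close the loop, because the only additional information cheaply available is the two-sided growth bound $e^{-\|a\|_{L^\infty}h}\inf u(\cdot,t)\le\inf u(\cdot,t+h)$ and $\sup u(\cdot,t+h)\le e^{\|a\|_{L^\infty}h}\sup u(\cdot,t)$ from the maximum principle, and these yield only circular estimates. Indeed, on an unbounded domain the equal-time inequality is false (the Gaussian kernel concentrates at fixed time), so the argument must genuinely exploit the boundedness of $\Omega$ and the no-flux structure. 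This is exactly the content of the fixed-time Harnack inequality of \cite{huska2006harnack}, which I would invoke: after time $\delta$ the parabolic smoothing in a bounded domain forces the profile $u(\cdot,t)$ to be comparable to its spatial average, up to a factor controlled by the spectral gap of $-\Delta$ under Neumann conditions (the exponential-separation phenomenon), and this comparability is precisely the asserted bound with $C=C(\delta,\|a\|_{L^\infty})$. I therefore expect the equal-time upgrade, rather than the local estimates and chaining, to be the crux, and would either cite \cite{huska2006harnack} for it directly or reproduce its fixed-time argument.
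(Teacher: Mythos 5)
The paper supplies no proof of this lemma at all: it is stated with a bare citation to \cite{huska2006harnack}, so your plan of ultimately invoking that reference for the fixed-time inequality reproduces the paper's approach, and on those terms your proposal is acceptable. However, your diagnosis of where the difficulty lies is mistaken, and the self-contained argument you set up and then abandon in fact closes. You claim that combining the time-lagged inequality with the maximum-principle growth bounds is circular; it is not. Take $h=\delta/2$ and observe that $w:=e^{-\|a\|_{L^\infty}t}u$ is a subsolution of the heat equation with Neumann boundary conditions, so comparison with the constant solution gives $\sup_{\overline\Omega}u(\cdot,t)\le e^{\|a\|_{L^\infty}h}\sup_{\overline\Omega}u(\cdot,t-h)$ for $t\ge h$. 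Applying your lagged inequality at $s=t-h$ (legitimate for all $t\in[\delta,T)$ once its initial offset is taken to be $\delta/2$, since $t-h\ge \delta/2$ and $s+h=t<T$) then yields
\[
\sup_{\overline\Omega}u(\cdot,t)\;\le\; e^{\|a\|_{L^\infty}h}\,\sup_{\overline\Omega}u(\cdot,t-h)\;\le\; e^{\|a\|_{L^\infty}h}\,Q\,\inf_{\overline\Omega}u(\cdot,t),
\]
which is precisely the equal-time statement with $C=Q\,e^{\|a\|_{L^\infty}\delta/2}$. The point you missed is that one only needs the growth bound on the $\sup$ side, pushed forward from the earlier time; no reversed control of the $\inf$, and no backward-in-time information, is required, so there is no circularity.

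Consequently the crux is not the equal-time upgrade but the step you called routine: the lagged inequality with constants uniform over all of $\overline\Omega$, obtained by chaining local Harnack estimates along a chain whose length is bounded independently of the pair of points. That is where boundedness of $\Omega$ (and the reflection across the Neumann boundary) genuinely enters, and it is also where your Gaussian counterexample on $\mathbb{R}^n$ actually bites: on an unbounded domain the \emph{uniform lagged} inequality already fails, not merely its fixed-time consequence. With this correction, your sketch — interior Krylov--Safonov/Moser estimates, boundary flattening and reflection, finite chaining, then the two-line upgrade above — constitutes a complete elementary proof, making the citation to \cite{huska2006harnack} a convenience rather than a necessity.
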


We also recall the following well known result about the elliptic eigenvalue problem.

\begin{lemma}\label{eig-value-lemma}\cite{CC2003}
 Suppose that $d>0$  and  $h\in C(\overline{\Omega})$. Let $\sigma(d,h)$ be the principal eigenvalue of the following  elliptic eigenvalue problem:
\begin{equation*}
    \begin{cases}
    d\Delta \varphi +h\varphi=\sigma \varphi, & x\in\Omega,\cr
    \partial_\nu \varphi=0, & x\in\partial\Omega.
    \end{cases}
\end{equation*}
Then $\sigma(d,h)$ is  simple, associated with a positive eigenfunction, and given by the variational formula
\begin{equation}\label{princ-formula}
    \sigma(d,r)=\sup\Big\{\int_{\Omega}\big[h\varphi^2-d|\nabla\varphi|^2\big]dx:\ \varphi\in W^{1,2}(\Omega)\ \text{and}\  \|\varphi\|_{L^2(\Omega)}=1\Big\}.
\end{equation}
Furthermore, the following conclusions hold:
\begin{itemize}
\item[(i)]If $h(x)\equiv h$ is a constant function, then $\sigma(d,h)=h$ for all $d>0$.
\item[(ii)]If $h(x)$ is not constant, then the map $(0,\infty)\ni d\mapsto \sigma(d,h)$ is strictly decreasing with
\begin{equation}\label{ga9}
    \lim_{d\to 0^+}\sigma(d,h)=h_{M} \quad \text{and}\quad {\lim_{d\to\infty}\sigma(d,h)=\overline{h}}.
\end{equation}
\end{itemize}
\end{lemma}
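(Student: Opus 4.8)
The plan is to derive every assertion from the variational characterization, which I would establish first by the direct method of the calculus of variations. Set $J(\varphi):=\int_{\Omega}\big[h\varphi^2-d|\nabla\varphi|^2\big]dx$ on the constraint set $\mathcal{S}:=\{\varphi\in W^{1,2}(\Omega):\|\varphi\|_{L^2(\Omega)}=1\}$, and let $\Lambda:=\sup_{\varphi\in\mathcal{S}}J(\varphi)$. Since $h\in C(\overline\Omega)$, one has $\int_\Omega h\varphi^2\,dx\le h_M$ on $\mathcal{S}$, so $\Lambda\le h_M<\infty$. For a maximizing sequence $\{\varphi_n\}\subset\mathcal{S}$, the identity $d\int_\Omega|\nabla\varphi_n|^2\,dx=\int_\Omega h\varphi_n^2\,dx-J(\varphi_n)$ shows $\{\varphi_n\}$ is bounded in $W^{1,2}(\Omega)$; by Rellich--Kondrachov I extract a subsequence converging weakly in $W^{1,2}$ and strongly in $L^2$ to some $\varphi_*$. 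Strong $L^2$ convergence preserves $\|\varphi_*\|_{L^2(\Omega)}=1$ and gives $\int_\Omega h\varphi_n^2\to\int_\Omega h\varphi_*^2$, while weak lower semicontinuity of the Dirichlet energy gives $\int_\Omega|\nabla\varphi_*|^2\le\liminf_n\int_\Omega|\nabla\varphi_n|^2$, so $J(\varphi_*)\ge\Lambda$ and $\varphi_*$ is a maximizer. The Euler--Lagrange equation of this constrained problem is exactly $d\Delta\varphi_*+h\varphi_*=\Lambda\varphi_*$ with the natural (Neumann) boundary condition, the Lagrange multiplier being $\Lambda$ itself (multiply by $\varphi_*$ and integrate by parts to see $\Lambda=J(\varphi_*)$). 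As $h$ is H\"older continuous, elliptic regularity upgrades $\varphi_*$ to a classical solution, identifying $\sigma(d,h)=\Lambda$ and proving the variational formula.

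For positivity and simplicity I would argue as usual. Because $|\nabla|\varphi_*||=|\nabla\varphi_*|$ a.e. and $h\varphi_*^2=h|\varphi_*|^2$, the function $|\varphi_*|$ is again a maximizer, so I may take $\varphi_*\ge 0$; since $d\Delta\varphi_*=(\Lambda-h)\varphi_*$, the strong maximum principle forces $\varphi_*>0$ on $\overline\Omega$. Simplicity then follows the standard route: if $\psi$ is any eigenfunction for $\sigma(d,h)$, choose $c$ so that $\psi-c\varphi_*$ vanishes at some point; a principal eigenfunction cannot change sign (apply the maximum principle to its positive and negative parts), and this forces $\psi\equiv c\varphi_*$.

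Part (i) is immediate: when $h$ is constant, $|\Omega|^{-1/2}\in\mathcal{S}$ gives $J=h$, while $J(\varphi)=h-d\int_\Omega|\nabla\varphi|^2\le h$ on $\mathcal{S}$, so $\sigma(d,h)=h$. For part (ii), strict monotonicity is read off the formula: for $d_1<d_2$ with maximizer $\varphi_{d_2}$, one has $\sigma(d_1,h)\ge\int_\Omega h\varphi_{d_2}^2-d_1\int_\Omega|\nabla\varphi_{d_2}|^2=\sigma(d_2,h)+(d_2-d_1)\int_\Omega|\nabla\varphi_{d_2}|^2$, and $\varphi_{d_2}$ cannot be constant (else the eigen-equation would force $h$ constant), so $\int_\Omega|\nabla\varphi_{d_2}|^2>0$ and the inequality is strict. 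By monotonicity the two limits are $\sup_{d>0}\sigma$ and $\inf_{d>0}\sigma$. For $d\to 0^+$, the bound $\sigma(d,h)\le h_M$ holds for all $d$; conversely, fixing $x_0$ with $h(x_0)=h_M$ and, given $\epsilon>0$, a normalized bump $\varphi_\delta$ supported where $h>h_M-\epsilon$ yields $\sigma(d,h)\ge(h_M-\epsilon)-d\int_\Omega|\nabla\varphi_\delta|^2\to h_M-\epsilon$, whence $\lim_{d\to0^+}\sigma(d,h)=h_M$. For $d\to\infty$, testing with the constant gives $\sigma(d,h)\ge\bar h$, while $d\int_\Omega|\nabla\varphi_d|^2=\int_\Omega h\varphi_d^2-\sigma(d,h)\le h_M-\bar h$ forces $\int_\Omega|\nabla\varphi_d|^2\to 0$; the Rellich-compact maximizers then converge in $L^2$ to $\pm|\Omega|^{-1/2}$, so $\int_\Omega h\varphi_d^2\to\bar h$ and $\sigma(d,h)\le\int_\Omega h\varphi_d^2\to\bar h$, giving $\lim_{d\to\infty}\sigma(d,h)=\bar h$.

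The existence, positivity, and simplicity steps are routine, so I expect the main obstacle to lie in the two limiting regimes of part (ii): the sharp lower bound $\liminf_{d\to0^+}\sigma(d,h)\ge h_M$ requires a concentrating test function localized near a maximum point of $h$ (with care near $\partial\Omega$), and the limit $d\to\infty$ requires extracting strong $L^2$ convergence of the maximizers $\varphi_d$ to a constant and checking the constraint passes to the limit. These compactness-and-localization arguments, together with verifying that the eigenfunction is genuinely non-constant for strict monotonicity, are where the real work sits.
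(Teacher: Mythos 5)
The paper offers no proof of this lemma to compare against: it is quoted as a classical fact with the citation \cite{CC2003}, so your blind write-up is in effect supplying the textbook argument, and it is essentially correct and standard — direct method on the $L^2$-sphere for existence of a maximizer, Euler--Lagrange identification of the Lagrange multiplier with the supremum, the $|\varphi_*|$-trick plus strong maximum principle for positivity, a sliding comparison for simplicity, testing the problem at diffusion rate $d_1$ with the maximizer for $d_2$ for strict monotonicity, concentrating bump functions for $d\to 0^+$, and the energy decay $d\int_\Omega|\nabla\varphi_d|^2dx\le h_M-\bar h$ together with Rellich compactness for $d\to\infty$. Two points deserve repair, though neither is a genuine gap. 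First, you invoke H\"older continuity of $h$ to obtain a classical solution, but the lemma assumes only $h\in C(\overline\Omega)$ — and this matters for the paper's applications, e.g. $h=\beta\lambda^*(S_0-r)$ in Theorem \ref{theorem_massds}, where $\lambda^*$ is merely continuous; the routine fix is to bootstrap the weak solution to $W^{2,p}(\Omega)$ for every $p<\infty$, hence $C^{1,\alpha}(\overline\Omega)$, and to use the strong maximum principle and Hopf boundary lemma for strong solutions, which is all that positivity and simplicity require. Second, the simplicity step is too compressed: ``choose $c$ so that $\psi-c\varphi_*$ vanishes at some point'' should be made precise as the sliding argument $c^*=\inf\{c:\ c\varphi_*\ge\psi\ \text{on}\ \overline\Omega\}$, which is finite because $\varphi_*>0$ on the compact set $\overline\Omega$; then $w:=c^*\varphi_*-\psi\ge 0$ is itself an eigenfunction for $\sigma(d,h)$ vanishing somewhere (else $c^*$ could be lowered), and $d\Delta w-Kw\le 0$ for large $K$ together with the strong maximum principle and Hopf's lemma at the Neumann boundary forces $w\equiv 0$ — note this route makes your parenthetical claim that a principal eigenfunction cannot change sign unnecessary (if you do want that claim, it needs the equality case in $J(|\psi|)\ge J(\psi)$, so that $|\psi|$ is again a maximizer, hence an eigenfunction, hence strictly positive). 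With these standard details filled in, your proof is complete and matches the argument in the cited reference.
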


\section{Model with mass action mechanism}

\subsection{Limiting the movement of susceptible people}\label{subsection31}
First, we consider the impact of limiting the movement of susceptible people on \eqref{model} with  mass action mechanism, that is, the long time behavior of the following degenerate system:
\begin{equation}\label{model-mass0}
\begin{cases}
\partial_t S=-\beta(x) S I+\gamma(x)  I, &x\in\bar\Omega, t>0,\\
\partial_t I=d_I\Delta I+\beta(x) S I-\gamma(x)  I,&x\in\Omega, t>0,\\
\partial_\nu I=0, &x\in\partial\Omega,t>0,\\
S(x, 0)=S_0(x), \ I(x, 0)=I_0(x), &x\in\bar\Omega.
\end{cases}
\end{equation}

The following result states that the solution of \eqref{model-mass0} exists and is bounded:
\begin{proposition}\label{prop}
Suppose that {\rm (A1)-(A2)} holds and $d_I>0$. Then \eqref{model-mass0} has a unique nonnegative global solution $(S, I)$, where
$$S\in C^1([0, \infty), C(\overline{\Omega}))\quad  \text{and}\quad I\in C([0, \infty), C(\overline{\Omega}))\cap C^1((0, \infty), {\rm Dom}_{\infty}(\Delta)).$$  Moreover, there exists $M>0$ depending on (the $L^\infty$ norm of) initial data such that 
\begin{equation}\label{bound0}
\|S(\cdot, t)\|_{L^\infty(\Omega)}, \ \|I(\cdot, t)\|_{L^\infty(\Omega)}\le M, \ \forall t\ge 0.
\end{equation}
\end{proposition}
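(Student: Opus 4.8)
The plan is to treat \eqref{model-mass0} as a semilinear evolution problem on $X:=C(\overline{\Omega})\times C(\overline{\Omega})$ in which only the $I$-component diffuses, produce a local solution together with a blow-up alternative, and then close the argument by deriving a priori $L^\infty$ bounds that are \emph{uniform in time}. For local existence I would note that $\big(\mathrm{id},\{e^{d_I t\Delta}\}_{t\ge0}\big)$ is a $C_0$-semigroup on $X$ (the $S$-equation carries no diffusion, so its ``semigroup'' is the identity) and that the reaction $F(S,I)=(-\beta SI+\gamma I,\ \beta SI-\gamma I)$ is locally Lipschitz on bounded subsets of $X$. Standard semilinear theory then yields a unique mild solution on a maximal interval $[0,T_{\max})$ with the alternative that either $T_{\max}=\infty$ or $\|S(\cdot,t)\|_{L^\infty}+\|I(\cdot,t)\|_{L^\infty}\to\infty$ as $t\uparrow T_{\max}$; equivalently one may solve the $S$-ODE explicitly in terms of $I$, substitute, and run a contraction mapping on $I$ alone. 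On $[0,T_{\max})$ the coefficient $\beta S-\gamma$ is bounded, so the parabolic maximum principle applied to the $I$-equation gives $I\ge0$ (and $I>0$ for $t>0$ by the strong maximum principle), and positivity of $I$ then forces $S\ge0$ via the explicit representation below.

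The decisive structural observation is a time-uniform bound on $S$. For fixed $x$ the $S$-equation is the linear ODE $\partial_t S=-\beta(x)I\,S+\gamma(x)I$, whose solution is
\[
S(x,t)=S_0(x)e^{-\int_0^t\beta(x)I(x,\tau)\,d\tau}+\int_0^t\gamma(x)I(x,s)\,e^{-\int_s^t\beta(x)I(x,\tau)\,d\tau}\,ds.
\]
Writing $\gamma I=r\,\beta I$ with $r=\gamma/\beta\le r_M$ and recognizing $\beta(x)I(x,s)e^{-\int_s^t\beta I\,d\tau}=\partial_s\,e^{-\int_s^t\beta I\,d\tau}$, the integral telescopes and is bounded by $r_M$. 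Hence
\[
0\le S(x,t)\le\|S_0\|_{L^\infty}+r_M=:S_{\max},\qquad (x,t)\in\overline{\Omega}\times[0,T_{\max}),
\]
a bound independent of $t$ and of the size of $I$. In particular the coefficient $c:=\beta S-\gamma$ in the $I$-equation satisfies $\|c\|_{L^\infty(\Omega\times(0,T_{\max}))}\le K$ for a constant $K=K(\|S_0\|_{L^\infty},\beta,\gamma)$ that does not depend on $T_{\max}$.

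The main obstacle is the time-uniform $L^\infty$ bound on $I$: because the Neumann heat semigroup preserves constants and hence does not decay, a direct Duhamel/Gronwall estimate only yields a bound growing like $e^{Kt}$. I would instead bootstrap, starting from the conserved mass. Integrating the two equations and using $S\ge0$ gives $\int_\Omega I\le\int_\Omega(S+I)=N$, a uniform $L^1$ bound. Rewriting the $I$-equation with the shifted generator $d_I\Delta-1$, whose semigroup $e^{-t}e^{d_I t\Delta}$ obeys the standard Neumann smoothing estimate $\|e^{-t}e^{d_I t\Delta}u\|_{L^q}\le C\big(1+t^{-\frac n2(\frac1p-\frac1q)}\big)e^{-t}\|u\|_{L^p}$, the exponential factor makes the Duhamel integral converge uniformly in $t$ whenever $\tfrac n2(\tfrac1p-\tfrac1q)<1$. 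Thus a uniform $L^p$ bound on $I$ upgrades to a uniform $L^q$ bound for $\tfrac1p-\tfrac1q<\tfrac2n$, and iterating finitely many times from $p=1$ reaches $q=\infty$, using the uniform bound on $c$ at each step. (Alternatively, one gets a uniform $L^2$ bound from the energy identity for $\int_\Omega I^2$ via Gagliardo--Nirenberg together with the $L^1$ bound, and then runs an Alikakos-type $L^p$-iteration.) This produces $M$ with $\|I(\cdot,t)\|_{L^\infty}\le M$ for all $t\in[0,T_{\max})$, depending only on $\|S_0\|_{L^\infty},\|I_0\|_{L^\infty},N$ and the data. Since these a priori bounds are independent of $T_{\max}$, the blow-up alternative forces $T_{\max}=\infty$, giving global existence; the asserted regularity then follows by standard bootstrapping, with $S\in C^1([0,\infty),C(\overline{\Omega}))$ read off directly from $\partial_t S=-\beta SI+\gamma I$ and $I\in C([0,\infty),C(\overline{\Omega}))\cap C^1((0,\infty),\mathrm{Dom}_\infty(\Delta))$ obtained from analytic-semigroup parabolic regularity once the forcing $cI$ is seen to be Hölder continuous in time.
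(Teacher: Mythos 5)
Your proposal is correct and follows essentially the same route as the paper: a time-uniform bound on $S$ from the ODE structure (the paper gets $\max\{\|S_0\|_{L^\infty(\Omega)},r_M\}$ by comparison, you get the slightly weaker $\|S_0\|_{L^\infty(\Omega)}+r_M$ by telescoping the variation-of-constants formula), the conserved mass $\int_\Omega I\,dx\le N$ upgraded to a uniform $L^\infty$ bound on $I$, local existence by a fixed-point/Duhamel argument, and global extension plus Pazy-style analytic-semigroup regularity. The only substantive difference is that where the paper simply cites Alikakos's Theorem 3.1 (or the Harnack inequality of Lemma \ref{lemma_harnack}) for the $L^1\to L^\infty$ step, you reprove it via the $L^p$--$L^q$ smoothing iteration for the exponentially shifted Neumann semigroup, which is a valid self-contained substitute.
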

\begin{proof}
First we suppose that the solution exists  on some maximal interval $[0, t_{max})$, $t_{\max}\in(0,\infty]$ and prove the boundedness of it. Let $M_1=\max\{\|S_0\|_{L^\infty(\Omega)}, \ r_M\}$. By the first equation of \eqref{model-mass0}, $S(x, t)\le M_1$ for all $x\in\bar\Omega$ and $0\le t<t_{\max}$. Since $\int_\Omega Idx\le \int_\Omega(S+I)dx\le N$ for all $t\in[0,t_{\max})$, by \cite[Theorem 3.1]{alikakos1979application} (or  Lemma \ref{lemma_harnack}), there exists $M_2>0$ depending on $\|I_0\|_{L^\infty(\Omega)}$ and $M_1$ such that $\|I(\cdot, t)\|_{L^\infty(\Omega)}\le M_2$ for all $0\le t< t_{\max}$. This proves \eqref{bound0}. 

The integral form of \eqref{model-mass0} is 
\begin{equation}\label{model-int}
\begin{cases}
S(\cdot, t)=e^{-at}S_0+\int_0^t e^{-a(t-s)}(aS(\cdot,s)-\beta S(\cdot, s)I(\cdot, s)+\gamma I(\cdot, s))ds, &t>0,\\
I(\cdot, t)=e^{t(d_I\Delta-a)}I_0+\int_0^t e^{(t-s)(d_I\Delta-a)}(a+\beta S(\cdot, s)-\gamma)I(\cdot, s)ds,&t>0,
\end{cases}
\end{equation}
where $a>0$ is chosen to be sufficiently large. Using Banach fixed point theory, one can show that \eqref{model-int} has a unique nonnegative solution $(S, I)\in [C([0, T], C(\overline{\Omega}))]^2$ for some $T>0$. By the first equation of \eqref{model-int},  $S\in C^1([0, T], C(\overline{\Omega}))$. By \cite[Theorem 4.3.1 and Corollary 4.3.3]{pazy2012semigroups}, $I\in C([0, T], C(\overline{\Omega}))\cap C^1((0, T], {\rm Dom}_{\infty}(\Delta))$. The a priori bound \eqref{bound0} enables us to extend the solution globally.
\end{proof}

We note that the case when the total population is small (i.e., $N< \int_\Omega r dx$) has been addressed in the literature:
\begin{theorem}\cite[Theorem 2.8-(i)]{castellano2022effect}
Suppose that {\rm (A1)-(A2)} holds and $d_I>0$. Let $(S, I)$ be the solution of \eqref{model-mass0}. If $N< \int_\Omega r dx$, then $(S(x, t), I(x, t))\to (S^*(x), 0)$   uniformly in $x\in\bar\Omega$ as $t\to \infty$, where $S^*\in C(\bar\Omega)$ and $\int_\Omega S^*dx=N$.
\end{theorem}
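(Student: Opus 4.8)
The plan is to exploit the fact that, with $d_S=0$, the $S$-equation is a pointwise ordinary differential equation, and to combine this with mass conservation so that everything reduces to the decay of $I$. Writing $u:=S-r$ (recall $r=\gamma/\beta$) and $u_0:=S_0-r$, the first equation of \eqref{model-mass0} becomes $\partial_t u=-\beta u I$, so $\partial_t(u^2)=-2\beta u^2 I\le 0$; hence $u(x,\cdot)$ keeps the sign of $u_0(x)$ and $|u(x,\cdot)|$ is nonincreasing. Consequently $S(x,t)=r(x)+u(x,t)$ converges monotonically and pointwise to some $S^*(x)$, and by the bound \eqref{bound0} and dominated convergence $\int_\Omega S(\cdot,t)\,dx\to\int_\Omega S^*\,dx$. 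Since $\int_\Omega(S+I)\,dx\equiv N$, the total infected mass $w(t):=\int_\Omega I(\cdot,t)\,dx$ converges to $c:=N-\int_\Omega S^*\ge 0$. It therefore remains to prove $c=0$ and to upgrade these convergences to hold uniformly in $x$.

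The crux is to locate a single point at which the accumulated force of infection is finite, and then to globalize it via Harnack. Because $c\ge 0$ and $N<\int_\Omega r$, we have $\int_\Omega(S^*-r)\,dx=(N-c)-\int_\Omega r<0$, so $S^*<r$ on a set of positive measure; fix $x_*$ in this set, so that $u_\infty(x_*):=S^*(x_*)-r(x_*)<0$. Integrating $\partial_t u=-\beta u I$ gives $u(x_*,t)=u_0(x_*)\exp\bigl(-\beta(x_*)\int_0^t I(x_*,s)\,ds\bigr)$, and since the limit is nonzero, the exposure $J_*:=\int_0^\infty I(x_*,s)\,ds<\infty$. Now $I$ solves $\partial_t I=d_I\Delta I+(\beta S-\gamma)I$ with $\beta S-\gamma$ bounded uniformly in $(x,t)$ by \eqref{bound0}, so Lemma \ref{lemma_harnack} provides $C>0$ with $\sup_\Omega I(\cdot,t)\le C\inf_\Omega I(\cdot,t)$ for all $t\ge 1$. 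Integrating in time, $\int_1^\infty\|I(\cdot,t)\|_{L^\infty}\,dt\le C\int_1^\infty\inf_\Omega I(\cdot,t)\,dt\le C\int_1^\infty I(x_*,t)\,dt\le C J_*<\infty$, whence $\int_0^\infty\|I(\cdot,t)\|_{L^\infty}\,dt<\infty$. Since $w\in C^1$ has bounded derivative and $\int_0^\infty w(t)\,dt\le|\Omega|\int_0^\infty\|I(\cdot,t)\|_{L^\infty}\,dt<\infty$, Lemma \ref{lemma_inf0} gives $w(t)\to 0$, so $c=0$, $\int_\Omega S^*=N$, and $\|I(\cdot,t)\|_{L^\infty}\le C w(t)/|\Omega|\to 0$. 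Finally $|S(x,t)-S(x,t')|\le\beta_M\|u_0\|_{L^\infty}\int_{t'}^\infty\|I(\cdot,s)\|_{L^\infty}\,ds$ shows $S(\cdot,t)$ is uniformly Cauchy in $C(\bar\Omega)$, so $S\to S^*$ uniformly with $S^*\in C(\bar\Omega)$.

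The main obstacle is the degeneracy $d_S=0$: the $S$-component enjoys no parabolic smoothing, so the orbit need not be precompact and the usual LaSalle/attractor machinery is unavailable; moreover one cannot simply invoke linearized stability of the limiting disease-free state, since $S^*>r$ may hold on part of $\Omega$ and the principal eigenvalue $\sigma(d_I,\beta S^*-\gamma)$ need not be negative. The device that circumvents this is the Harnack inequality of Lemma \ref{lemma_harnack}, which rigidly ties $\sup_\Omega I$ and $\inf_\Omega I$ together and thereby converts finiteness of the exposure at the single point $x_*$ into uniform-in-space integrability of $\|I(\cdot,t)\|_{L^\infty}$. Producing such an $x_*$ is exactly where the smallness hypothesis $N<\int_\Omega r$ is essential: it forces $S^*<r$ somewhere and hence guarantees a location with finite accumulated force of infection; when $N$ is large this mechanism fails, consistent with the disease persisting.
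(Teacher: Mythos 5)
Your proof is correct and follows essentially the same route as the paper's own argument (Case 1 of the proof of Theorem \ref{theorem_massds}, which subsumes this cited result): integrate the pointwise ODE for $S$ to get $S-r=(S_0-r)e^{-\beta J}$, use $N<\int_\Omega r\,dx$ to produce a point $x_*$ with finite accumulated exposure $\int_0^\infty I(x_*,s)\,ds<\infty$, and globalize via the Harnack inequality (Lemma \ref{lemma_harnack}) combined with Lemma \ref{lemma_inf0}. Your only deviations are mild streamlinings: you rule out the paper's alternative case $\hat J\equiv\infty$ up front through mass conservation and dominated convergence rather than a dichotomy, and you apply Lemma \ref{lemma_inf0} to the Lipschitz total mass $w(t)=\int_\Omega I\,dx$ instead of to $t\mapsto I(x_0,t)$, which lets you bypass the parabolic H\"older estimates the paper invokes.
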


We are ready to study the asymptotic behavior of the solution of \eqref{model-mass0}. 


\begin{theorem}\label{theorem_massds}
Suppose that {\rm (A1)-(A2)} holds and $d_I>0$. Let $(S, I)$ be the solution of \eqref{model-mass0}.  Then $\|S(\cdot, t)-S^*\|_{L^\infty(\Omega)}\to 0 $ as $t\to\infty$ for some $S^*\in C(\overline{\Omega})$, and exactly one of the following two statements hold:
\begin{enumerate}
\item[\rm (i)] $S^*=\lambda^*S_0+(1-\lambda^*)r$ for some $\lambda_*\in C(\bar{\Omega})$ with $0< \lambda_*<1$ and  $\sigma(d_I,\beta\lambda^*(S_0-r))\le 0$,  $\int_{\Omega}S^*dx=N$,  and  $\|I(\cdot, t)\|_{L^\infty(\Omega)}\to 0$ as $t\to\infty$;

\item[\rm (ii)]  $S^*=r$  and  $\|I(\cdot,t)-I^*\|_{L^\infty(\Omega)}
\to 0$ as $t\to\infty$, where $I^*={(N-\int_{\Omega}r dx)}/{|\Omega|}$ is a positive constant.
\end{enumerate}
Moreover, {\rm (i)} holds if $N\le \int_\Omega r dx$ while  {\rm (ii)} holds if  $N> N^*_{S_0,r}$, where $N^*_{S_0,r}$ is defined by 
\begin{equation}\label{N-critic}
     N^*_{ S_0,r}:=\sup\Big\{\int_{\Omega}(\lambda^*S_0+(1-\lambda^*)r)dx\ :\ \lambda^*\in C(\bar{\Omega};[0,1])\,\,   \text{and}\,\, \sigma(d_I,\beta\lambda^*(S_0-r))\le 0\Big\}.
 \end{equation}
 
\end{theorem}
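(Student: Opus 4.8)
The plan is to exploit that, with $d_S=0$, the first equation of \eqref{model-mass0} is an ODE in $t$ at each fixed $x$, which I integrate explicitly. Writing it as $\partial_t S=\beta(x)I(r-S)$ and setting $\Psi(x,t):=\int_0^t I(x,s)\,ds$, I obtain the closed formula
$$S(x,t)=r(x)+(S_0(x)-r(x))e^{-\beta(x)\Psi(x,t)}.$$
Since $I\ge0$, the factor $\lambda(x,t):=e^{-\beta(x)\Psi(x,t)}$ is nonincreasing in $t$ and lies in $(0,1]$, so $\lambda(x,t)\downarrow\lambda^*(x)\in[0,1]$ pointwise and hence $S(x,t)\to S^*:=r+(S_0-r)\lambda^*$ pointwise. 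The strong maximum principle gives $I>0$ for $t>0$, so $\Psi>0$ and $\lambda^*<1$. Everything then hinges on the dichotomy for $\Psi^*(x):=\int_0^\infty I(x,s)\,ds$ being finite versus infinite. Applying Lemma \ref{lemma_harnack} to the $I$-equation with bounded coefficient $a=\beta S-\gamma$ (bounded by Proposition \ref{prop}) gives $\sup_\Omega I(\cdot,s)\le C\inf_\Omega I(\cdot,s)$ for $s\ge\delta$ with $C$ uniform in time; integrating in $s$ makes $\Psi^*$ comparable across $x$, so $\Psi^*$ is finite everywhere or infinite everywhere, and these two alternatives will become exactly (i) and (ii).

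In the finite case, $\lambda^*>0$ everywhere and Harnack yields uniform convergence: the bound $\sup_x\int_t^\infty I\,ds\le\int_t^\infty\sup_\Omega I\,ds\le\frac{C}{|\Omega|}\int_t^\infty\int_\Omega I\,dx\,ds\to0$ shows $\Psi(\cdot,t)\to\Psi^*$ in $L^\infty$, whence $\lambda^*,S^*\in C(\bar\Omega)$ and $S(\cdot,t)\to S^*$ in $L^\infty$. Mass conservation gives $\int_\Omega I\,dx\to N-\int_\Omega S^*\,dx=:L$, while Fubini turns the finiteness of $\int_\Omega\Psi^*\,dx$ into $\int_0^\infty(\int_\Omega I\,dx)\,ds<\infty$, which forces $L=0$, i.e. $\int_\Omega S^*\,dx=N$; then $\inf_\Omega I\le\frac1{|\Omega|}\int_\Omega I\to0$ and Harnack upgrades this to $\|I(\cdot,t)\|_\infty\to0$. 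This is case (i). The eigenvalue inequality is obtained by contradiction: if $\sigma:=\sigma(d_I,\beta\lambda^*(S_0-r))>0$ with positive principal eigenfunction $\varphi$, then since $\beta(S-r)\to\beta\lambda^*(S_0-r)$ uniformly, for $t_0$ large the function $\underline I=\epsilon e^{(\sigma/2)(t-t_0)}\varphi$ is a subsolution of the $I$-equation lying below $I(\cdot,t_0)$ (using the Harnack lower bound and small $\epsilon$), so comparison forces $I$ to grow exponentially, contradicting $I\to0$.

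In the infinite case, $\lambda^*\equiv0$, so $S^*=r$, and now Harnack gives the $x$-independent lower bound $\int_\delta^t I(x,s)\,ds\ge\frac{1}{C|\Omega|}\int_\delta^t\int_\Omega I\,dx\,ds\to\infty$, so $\lambda(\cdot,t)\to0$ and $S(\cdot,t)\to r$ in $L^\infty$. Mass conservation gives $\int_\Omega I\,dx\to N-\int_\Omega r\,dx=:L\ge0$, and the target is $I\to I^*=L/|\Omega|$. Splitting $I=\bar I+\tilde I$ with $\bar I=\frac1{|\Omega|}\int_\Omega I$ and $\tilde I\in\mathcal Z$, the deviation solves $\partial_t\tilde I=d_I\Delta\tilde I+(F-\bar F)$ with forcing $F=\beta(S-r)I\to0$ uniformly (since $S\to r$ uniformly and $I$ is bounded). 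Duhamel's formula combined with the decay estimate \eqref{KBB0} on $\mathcal Z$ (rescaled by $d_I$) gives $\|\tilde I(t)\|_{L^1}\le C_0e^{-\sigma_1 d_I(t-t_0)}\|\tilde I(t_0)\|_{L^1}+C_0\int_{t_0}^t e^{-\sigma_1 d_I(t-s)}\|(F-\bar F)(s)\|_{L^1}\,ds\to0$, and a parabolic-regularity and Harnack upgrade promotes this to $\|\tilde I(t)\|_\infty\to0$. Hence $I\to L/|\Omega|$ in $L^\infty$, which is case (ii), with $I^*>0$ precisely when $N>\int_\Omega r\,dx$.

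With the dichotomy in hand, the thresholds follow cleanly. In case (i) we have $\lambda^*\in C(\bar\Omega;[0,1])$ with $\sigma(d_I,\beta\lambda^*(S_0-r))\le0$ and $\int_\Omega S^*\,dx=N$, so $N$ is a competitor in \eqref{N-critic} and $N\le N^*_{S_0,r}$; contrapositively $N>N^*_{S_0,r}$ forces (ii). Likewise, since $\int_\Omega I\,dx\to N-\int_\Omega r\,dx\ge0$ and case (ii) carries $I^*>0$, case (ii) requires $N>\int_\Omega r\,dx$, so $N\le\int_\Omega r\,dx$ forces (i). The step I expect to cost the most work is the threshold $N=\int_\Omega r\,dx$: there $\int_\Omega I\,dx\to0$, yet one must still decide whether $\Psi^*$ is finite (so that $0<\lambda^*<1$ and (i) holds as claimed) rather than infinite, which requires quantifying the \emph{rate} at which $\int_\Omega I\,dx$ vanishes rather than merely its vanishing, and is where the soft Harnack-plus-mass-conservation argument must be supplemented by finer analysis. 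The eigenvalue characterization in case (i) and this threshold analysis are the two genuinely nontrivial ingredients; the remainder is bookkeeping with the explicit $S$-formula, Harnack, and the semigroup estimate \eqref{KBB0}.
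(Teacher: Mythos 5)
Your proposal follows the same skeleton as the paper's proof: the dichotomy on finiteness of $\hat J(x)=\int_0^\infty I(x,s)\,ds$, the explicit integration $S-r=(S_0-r)e^{-\beta J}$, the Harnack inequality (Lemma \ref{lemma_harnack}) to make the dichotomy $x$-independent and to upgrade pointwise statements to uniform ones, and mass conservation plus the definition \eqref{N-critic} for the two threshold claims. You deviate in two local sub-arguments, and both are valid. First, for the eigenvalue bound $\sigma(d_I,\beta\lambda^*(S_0-r))\le 0$ the paper tests the $I$-equation against the principal eigenfunction $\varphi^*$, integrates the resulting differential inequality, and divides by $t$, using only the bound $\int_\Omega\varphi^* I\,dx\le N$; your contradiction argument via the growing subsolution $\epsilon e^{(\sigma/2)(t-t_0)}\varphi$ instead invokes the parabolic comparison principle together with the uniform convergence $\beta(S-r)\to\beta\lambda^*(S_0-r)$, and works equally well. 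Second, in the infinite case the paper proves $I\to (N-\int_\Omega r\,dx)/|\Omega|$ by precompactness of the orbit in $C(\bar\Omega)$, passage to the $\omega$-limit set, and rigidity of bounded entire solutions of the heat equation, whereas your Duhamel estimate on $\mathcal Z$ with \eqref{KBB0} and the vanishing forcing $F=\beta(S-r)I$ is more quantitative and avoids the entire-solution argument; note, though, that the final $L^\infty$ upgrade should come from precompactness of the orbit (parabolic regularity and Sobolev embedding, as in the paper), not from Harnack, since $\tilde I=I-\bar I$ changes sign. A pleasant simplification on your side: in case (i) you get $\|I(\cdot,t)\|_{L^\infty(\Omega)}\to 0$ directly from $\int_\Omega I\,dx\to 0$ (via mass conservation and Fubini) and Harnack, bypassing the paper's appeal to Lemma \ref{lemma_inf0}. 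Finally, the difficulty you flag at the threshold $N=\int_\Omega r\,dx$ does not put you behind the published proof: the paper's Case 2 only yields $\lim_{t\to\infty}\int_\Omega I\,dx=N-\int_\Omega r\,dx$, which excludes Case 2 for $N<\int_\Omega r\,dx$, and the boundary case is dispatched there with the unexplained remark that it ``is easy to see''; neither your text nor the paper's supplies the finer rate analysis that would be needed to rule out the infinite-$\hat J$ alternative when $N=\int_\Omega r\,dx$ exactly, so on this point your proposal is as complete as, and more candid than, the original.
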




\begin{proof}
Define 
$$
J(x,t)=\int_0^{t}I(x,\tau)d\tau,\quad \forall\ (x, t)\in\bar{\Omega}\times [0, \infty).
$$
It is clear that $J(x,t)$ is strictly monotone increasing in $t$ for each $x\in\bar{\Omega}$. Hence we can define
$$
\hat J(x):=\int_{0}^{\infty}I(x,\tau)d\tau=\lim_{t\to\infty}J(x,t) \in(0,\infty],\quad \forall\ x\in\bar{\Omega}.
$$
Now, we distinguish two cases.
\medskip

\noindent {\bf Case 1.} $\hat J(x_0)<\infty$ for some $x_0\in \overline{\Omega}$. By \eqref{bound0},  we have $\sup_{t\ge 0}\|\beta S(\cdot,t)-\gamma\|_{\infty}<\infty$. So by Lemma \ref{lemma_harnack}, there is a positive constant $c_1$ such that 
\begin{equation}\label{TSS2}
I_M(\cdot,t)\le c_1I_{m}(\cdot,t),\quad \forall\ t\ge 1.
\end{equation}
It follows that 
$$
\hat J(x)=J(x,1)+\int_{1}^{\infty}I(x,\tau)d\tau\le J(x,1)+c_1\int_{1}^{\infty}I(x_0,\tau)d\tau\le J(x,1)+c_1J(x_0),\quad \forall\ x\in\bar{\Omega}. 
$$
This shows that $\hat J(x)<\infty$ for all $x\in\bar{\Omega}$ and $\|\hat J\|_{L^{\infty}(\Omega)}<\infty$. Moreover, by \eqref{TSS2}, 
$$ 
\|\hat J-J(\cdot,t)\|_{L^{\infty}(\Omega)}\le c_1\int_{t}^{\infty}I(x_0,\tau)d\tau\to 0 \quad \text{as}\quad t\to\infty.
$$
Hence, $\hat J\in C(\bar{\Omega})$.  Next, observing that 
 $$
 \partial_t(S-r)=\partial_tS=-\beta(S-r)I, \qquad \forall \ (x, t)\in\bar\Omega\times (0, \infty),
 $$
 we have
 \begin{equation}\label{TSS1}
 S(x,t)-r(x)=(S_0(x)-r(x))e^{-\beta(x)J(x,t)},\quad \forall\  (x, t)\in\bar\Omega\times (0, \infty).
 \end{equation}
 This implies 
 $$
 S(\cdot, t)\to S^*:=r+e^{-\beta \hat J}(S_0-r)  \ \text{uniformly on}\ \bar\Omega \ \text{as}\ t\to\infty.
 $$
 Let $\lambda^*=e^{-\beta \hat J}$. Then $0<\lambda^*<1$, $\lambda^*\in C(\bar{\Omega})$, and $S^*=\lambda^*S_0+(1-\lambda^*)r$.  Let $\varphi^*>0$ be the eigenfunction associated with $\sigma(d_I,\beta(S^*-r))$ satisfying $\varphi^*_{M}=1$. Then it holds that 
 \begin{align*}
     \frac{d}{dt}\int_{\Omega}\varphi^*Idx&=d_I\int_{\Omega}\varphi^*\Delta Idx +\int_{\Omega}\beta(S-r)\varphi^*Idx\cr 
=&\sigma(d_I,\beta(S^*-r))\int_{\Omega}\varphi^*Idx+\int_{\Omega}\beta(S-S^*)\varphi^*Idx\cr 
     \ge& \Big(\sigma(d_I,\beta(S^*-r))-\beta_M\|S-S^*\|_{L^{\infty}(\Omega)}\Big)\int_{\Omega}\varphi^*Idx.
 \end{align*}
 Hence, we have
 $$
 N\ge \int_{\Omega}\varphi^*I(x,t)dx\ge e^{\int_{0}^t(\sigma(d_I,\beta(S^*-r))-\beta_M\|S-S^*\|_{L^{\infty}(\Omega)})d\tau}\int_{\Omega}\varphi^*I_0dx,\quad \forall\ t>0.
 $$
 This implies that 
 $$
 \frac{\beta_M}{t}\int_{0}^{t}\|S(\cdot,\tau)-S^*\|_{L^{\infty}(\Omega)}d\tau+\frac{\ln(N)-\ln(\|\varphi^*I_0\|_{L^1(\Omega)})}{t}\ge \sigma(d_I,\beta(S^*-r)),\quad t>0.
 $$
 Observing that the left-hand side of this inequality tends zeo as $t\to\infty$, we conclude that $\sigma(d_I,\beta(S^*-r))\le 0$.

 Finally, by \eqref{bound0} and the parabolic estimates and the Sobolev emdedding theorem,  $I$ is H\"older continuous on $\bar\Omega\times [1, \infty)$. Hence, by Lemma \ref{lemma_inf0} and the fact that $\hat J(x_0)<\infty$, we obtain  from \eqref{TSS2} that
 $$
 \|I(\cdot,t)\|_{L^\infty(\Omega)}\le c_1I(x_0,t)\to 0\quad \text{as} \quad t\to\infty,
 $$
 which in turn implies that $$\int_{\Omega}S^*dx=\lim_{t\to\infty}\int_{\Omega}Sdx=\lim_{t\to\infty}\int_{\Omega}(S+I)dx=N.$$ 
 This completes the proof of {\rm (i)}.

\noindent {\bf Case 2.} $\hat J(x)=\infty$ for all $x\in\bar{\Omega}$. Fix $x_1\in\bar\Omega$. Then $\int_1^{t}I(x_1,\tau)d\tau\to \infty$ as $t\to\infty$. This together with \eqref{TSS2}-\eqref{TSS1} implies that
$$
\|S(\cdot,t)-r\|_{L^{\infty}(\Omega)}\le \|S_0-r\|_{L^{\infty}(\Omega)}e^{-\frac{\beta_m}{c_1}\int_1^{t}I(x_1,\tau)d\tau}\to 0\quad \text{as}\quad t\to\infty.
$$
As a result,
\begin{equation}\label{TSS4}
    \lim_{t\to\infty}\int_{\Omega}Idx=N-\int_{\Omega}rdx.
\end{equation}
This shows that, in the current case,  we must have that $N\ge \int_{\Omega}rdx$. By \eqref{bound0}, the parabolic estimates and the Sobolev embedding theorem, the orbit $\{I(\cdot,t)\}_{t\ge 1}$ is precompact in $C(\bar\Omega)$. Noticing $S(\cdot, t)\to r$ in $C(\bar\Omega)$ as $t\to\infty$, the $\omega$ limit set  $\omega(S_0, I_0)=\cap_{t\ge 1} \overline{\cup_{s\ge t}\{(S(\cdot, t), I(\cdot, s))\}}$ is well-defined, where  the completion is in $[C(\bar\Omega)]^2$. {  Fix   $(S^*,I^*)\in \omega(S_0,I_0)$. Since $S(\cdot,t)\to r$ in $C(\bar{\Omega})$ as $t\to\infty$, then  $S^*=r^*$. Next, we show that $I^*(x)=(N-\int_{\Omega}r)/|\Omega|$ for all $x\in\Omega$. To this end, since $\omega(I_0,S_0)$ is invariant under the semiflow of the solution operator induced by \eqref{model-mass0} and the orbit $\{I(\cdot,t)\}_{t\ge 1}$ is precompact in $C(\bar{\Omega})$, we can employ standard parabolic regularity arguments to the equation of $I(\cdot,t)$, and coupled with the fact that $S(\cdot,t)\to r$ in $C(\bar{\Omega})$ as $t\to\infty$, to conclude that  there is a bounded entire solution $(\tilde{S}(x,t),\tilde{I}(x,t))$ of \eqref{model-mass0} fulfilling $\tilde{I}(\cdot,0)=I^*$ and $\tilde{S}(\cdot,t)=r$ for all $t\in\mathbb{R}$. 
Hence, $\tilde{I}(x,t)$ satisfies 
$$
\begin{cases}   \partial_t\tilde{I}=d_I\Delta\tilde{I}, & x\in\Omega, \ t\in\mathbb{R},\cr 
    \partial_{\nu}\tilde{I}=0, & x\in\partial\Omega,\ t\in\mathbb{R},\cr 
    \tilde{I}(x,0)=I^*(x), & x\in\bar{\Omega}.
\end{cases}
$$
So, $\tilde{I}(\cdot,t)=\int_{\Omega}I^*(x)dx/|\Omega|$ for all $t\in\mathbb{R}$. However by \eqref{TSS4}, $\int_{\Omega}I^*dx=(N-\int_{\Omega}rdx)/|\Omega|$. Hence,  $I^*$ is the  constant function $(N-\int_\Omega r dx)/|\Omega|$.  This shows that $\omega(S_0,I_0)=\{r,(N-\int_{\Omega}rdx)/|\Omega|\}$    
 and completes the proof of {\rm (ii)}.}

It is easy to see that if $N\le \int_\Omega rdx$, then (i) holds.  
  Note that if {\rm (i)} holds then $N=\int_{\Omega}(\lambda^*S_0+(1-\lambda^*)r)dx$ for some $\lambda^*\in C(\overline{\Omega})$ satisfying $0< \lambda^*<1$ and $\sigma(d_I,\beta\lambda^*(S_0-r))\le 0$. Hence, we must have  $N\le N^*_{S_0,r}$. So alternative {\rm (ii)} must hold whenever $N>N^*_{S_0,r}$. 
\end{proof}

{ 
\begin{proposition} Let $N^*_{S_0,r}$ be defined as in Theorem 
\ref{theorem_massds}. The following statements hold.
   \begin{itemize}\item[\rm (1)]  It  holds that $N^*_{S_0,r}\ge \int_{\Omega}rdx$.
   \item[\rm (2)] It holds that $N^*_{S_0,r}\leq \int_{\Omega}\max\{S_0,r\}dx$. Hence if $N>\int_{\Omega}\max\{S_0,r\}dx$, then alternative (ii) of Theorem \ref{theorem_massds} holds.
   \item[\rm (3)] The strict inequality $N^*_{S_0,r}< \int_{\Omega}\max\{S_0,r\}dx$ holds if $\|(S_0-r)_+\|_{L^\infty(\Omega)}>0$ (Therefore, in general, the condition $N>N^*_{S_0,r}$ is weaker than $N>\int_{\Omega}\max\{S_0,r\}dx$).
   \end{itemize}
\end{proposition}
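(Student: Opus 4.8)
The plan is to handle the three parts in increasing order of difficulty, the first two being immediate from the definition \eqref{N-critic} and the third requiring a compactness argument. For (1), I would exhibit a feasible competitor in the supremum \eqref{N-critic}: take $\lambda^*\equiv 0$. Then $\beta\lambda^*(S_0-r)\equiv 0$ is constant, so Lemma \ref{eig-value-lemma}(i) gives $\sigma(d_I,0)=0\le 0$, and the corresponding value of the functional is $\int_\Omega r\,dx$; hence $N^*_{S_0,r}\ge\int_\Omega r\,dx$. For (2), I would observe that for every $\lambda^*\in C(\bar\Omega;[0,1])$ the integrand $\lambda^* S_0+(1-\lambda^*)r$ is a pointwise convex combination of $S_0$ and $r$, so $\lambda^* S_0+(1-\lambda^*)r\le\max\{S_0,r\}$ on $\bar\Omega$; integrating and taking the supremum over feasible $\lambda^*$ yields $N^*_{S_0,r}\le\int_\Omega\max\{S_0,r\}\,dx$. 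The last assertion of (2) is then automatic: if $N>\int_\Omega\max\{S_0,r\}\,dx\ge N^*_{S_0,r}$ then $N>N^*_{S_0,r}$, so alternative (ii) of Theorem \ref{theorem_massds} holds.

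The substance is in (3). Writing the objective as $\int_\Omega(\lambda^* S_0+(1-\lambda^*)r)\,dx=\int_\Omega r\,dx+\int_\Omega\lambda^*(S_0-r)\,dx$ and recalling $\int_\Omega\max\{S_0,r\}\,dx=\int_\Omega r\,dx+\int_\Omega(S_0-r)_+\,dx$, the claim reduces to showing that the value $\int_\Omega(S_0-r)_+\,dx$ cannot be approached by $\int_\Omega\lambda^*(S_0-r)\,dx$ along feasible $\lambda^*$. The first key step is to extract a linear necessary condition from the constraint: testing the variational formula \eqref{princ-formula} with the constant function $\varphi\equiv|\Omega|^{-1/2}$ gives, for every feasible $\lambda^*$,
\[
\frac{1}{|\Omega|}\int_\Omega \beta\lambda^*(S_0-r)\,dx\le\sigma(d_I,\beta\lambda^*(S_0-r))\le 0 .
\]
I would then argue by contradiction: assuming $N^*_{S_0,r}=\int_\Omega\max\{S_0,r\}\,dx$, choose a maximizing sequence $\lambda_n^*\in C(\bar\Omega;[0,1])$ with $\int_\Omega\lambda_n^*(S_0-r)\,dx\to\int_\Omega(S_0-r)_+\,dx$, and pass to a limit.

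By Banach--Alaoglu together with separability of $L^1(\Omega)$, I extract a weak-$*$ limit $\lambda_n^*\rightharpoonup\lambda^\infty$ in $L^\infty(\Omega)$ with $0\le\lambda^\infty\le 1$. Since $S_0-r\in L^1(\Omega)$, the limit satisfies $\int_\Omega\lambda^\infty(S_0-r)\,dx=\int_\Omega(S_0-r)_+\,dx$; combined with the chain $\int_\Omega\lambda^\infty(S_0-r)\,dx\le\int_\Omega\lambda^\infty(S_0-r)_+\,dx\le\int_\Omega(S_0-r)_+\,dx$, both inequalities must be equalities, forcing $\lambda^\infty=1$ a.e.\ on $\{S_0>r\}$ and $\lambda^\infty=0$ a.e.\ on $\{S_0<r\}$, whence $\lambda^\infty(S_0-r)=(S_0-r)_+$ a.e. Passing to the limit in the displayed necessary condition (the fixed weight $\beta(S_0-r)$ lies in $L^1$) yields $\int_\Omega\beta(S_0-r)_+\,dx\le 0$. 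But $\|(S_0-r)_+\|_{L^\infty(\Omega)}>0$ together with continuity of $S_0-r$ makes $\{S_0>r\}$ a nonempty open set on which $\beta(S_0-r)>0$, so $\int_\Omega\beta(S_0-r)_+\,dx>0$, a contradiction; this gives the strict inequality, and the parenthetical comparison with the condition $N>\int_\Omega\max\{S_0,r\}\,dx$ follows at once.

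The main obstacle is precisely this passage to the weak-$*$ limit: the feasible set is not weak-$*$ closed in $L^\infty(\Omega)$, since a weak-$*$ limit of continuous functions need not be continuous. The remedy I rely on is to use the constraint only through the linear inequality $\int_\Omega\beta\lambda^*(S_0-r)\,dx\le 0$ coming from a single fixed test function, which is preserved under weak-$*$ convergence, rather than attempting to pass the full eigenvalue constraint $\sigma(d_I,\cdot)\le 0$ through to a possibly discontinuous limit.
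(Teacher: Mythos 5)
Your proposal is correct and follows essentially the same route as the paper's proof: parts (1) and (2) via the competitor $\lambda^*\equiv 0$ and the pointwise convex-combination bound, and part (3) by contradiction using Banach--Alaoglu to extract a weak-$*$ limit in $L^\infty(\Omega)$, forcing $\lambda^\infty(S_0-r)=(S_0-r)_+$ a.e., and then contradicting the linear necessary condition $\int_\Omega\beta\lambda^*(S_0-r)\,dx\le 0$, which is precisely what the paper extracts from $\sigma(d_I,\beta\lambda^*_k(S_0-r))\le 0$ via Lemma \ref{eig-value-lemma} (your constant test function $\varphi\equiv|\Omega|^{-1/2}$ in \eqref{princ-formula} makes this step explicit). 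Your closing observation --- that only this weak-$*$-stable linear consequence of the eigenvalue constraint is needed, since the feasible set itself is not weak-$*$ closed --- is a useful clarification of a point the paper leaves implicit, but the argument is the same.
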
}
\begin{proof} 
(1) Taking $\lambda^*\equiv 0$ in \eqref{N-critic}, we have the desired result. 

\noindent (2) For any $\lambda^* \in C(\bar\Omega; [0, 1])$, we have $\int_\Omega (\lambda^*S_0+(1-\lambda^*)r)dx\le \max\{S_0, r\}$. By the definition of  $N^*_{S_0,r}$, we have  $N^*_{S_0,r}\leq \int_{\Omega}\max\{S_0,r\}dx$.

\noindent (3) Suppose that $\|(S_0-r)_+\|_{L^\infty(\Omega)}>0$ and we prove  $N^*_{S_0,r}<\int_{\Omega}\max\{S_0,r\}dx$ by contradiction.  Suppose to the contrary that there is $\lambda^*_k\in C(\overline{\Omega};[0,1])$ satisfying ${ \sigma(d_I,\beta\lambda^*_k(S_0-r))}\le 0$ for each $k\ge 1$ such that 
 $$
 \int_{\Omega}(\lambda^*_kS_0+(1-\lambda^*_k)r)dx\to \int_{\Omega}\max\{S_0,r\}dx\quad \text{as}\quad k\to\infty.
 $$
 Since $\|\lambda^*_k\|_{L^{\infty}(\Omega)}\le 1$,  possibly after passing to a subsequence and using the Banach-Alaoglu theorem, there is $\lambda^*\in L^{\infty}(\Omega)$ satisfying $0\le \lambda^*\le 1$ almost everywhere on $\Omega$, such that  $\lambda^*_k\to \lambda^*$ weakly-star in $L^\infty(\Omega)$ as $k\to\infty$. So we have 
 $$
 \int_{\Omega}(\lambda^*_kS_0+(1-\lambda^*_k)r)dx\to \int_{\Omega}(\lambda^*S_0+(1-\lambda^*)r)dx\quad \text{as} \quad k\to\infty. 
 $$
It follows that
 $$
 \int_{\Omega}(\lambda^*S_0+(1-\lambda^*)r)dx=\int_{\Omega}\max\{S_0,r\}dx,
 $$
 which yields that $\max\{S_0,r\}=\lambda^*S_0+(1-\lambda^*)r$ almost everywhere on $\Omega$. So, $\beta\lambda^*(S_0-r)=\beta(\max\{S_0,r\}-r)$ almost everywhere on $\Omega$. Therefore, by the assumption  $\|(S_0-r)_+\|_{\infty}>0$,
 \begin{equation}\label{NN1}
 \int_{\Omega}\beta\lambda^*(S_0-r)dx=\int_{\Omega}\beta(\max\{S_0,r\}-r)dx=\int_{\{S_0>r\}}\beta(S_0-r)dx>0.
 \end{equation}
 However since ${ \sigma(d_I,\beta\lambda^*_k(S_0-r))}\le 0$, we have $\int_{\Omega}\beta\lambda^*_k(S_0-r))dx\le 0$ for any $k\ge 1$ by Lemma \ref{eig-value-lemma}. Letting $k\to\infty$, we get $\int_{\Omega}\beta\lambda^*(S_0-r)dx\le0$, which contradicts with \eqref{NN1}.
\end{proof}

We complement Theorem \ref{theorem_massds} with a corollary: 
\begin{corollary}\label{cor1}
    Suppose that {\rm (A1)-(A2)} holds, $N>\int_{\Omega}rdx$, and $d_I>0$. If either $\beta$ is constant or $S_0-r$ has a constant sign, then  alternative {\rm (ii)} of Theorem \ref{theorem_massds} holds for any solution of \eqref{model-mass0}.
\end{corollary}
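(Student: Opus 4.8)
The plan is to lean entirely on the dichotomy already established in Theorem \ref{theorem_massds}: since exactly one of alternatives (i) and (ii) holds for every solution, it suffices to rule out alternative (i) under each of the two hypotheses, after which (ii) holds automatically. Throughout, I would exploit the two pieces of data that alternative (i) supplies, namely the mass identity $\int_{\Omega}S^*dx=N$ and the eigenvalue bound $\sigma(d_I,\beta\lambda^*(S_0-r))\le 0$, while recording that $S^*=\lambda^*S_0+(1-\lambda^*)r$ means $S^*-r=\lambda^*(S_0-r)$, so that $\beta(S^*-r)=\beta\lambda^*(S_0-r)$ and the two eigenvalue expressions coincide.

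First I would translate the mass identity into a sign condition. Integrating $S^*-r=\lambda^*(S_0-r)$ over $\Omega$ gives $\int_{\Omega}(S^*-r)dx=\int_{\Omega}S^*dx-\int_{\Omega}r\,dx=N-\int_{\Omega}r\,dx$, which is strictly positive by the hypothesis $N>\int_\Omega r\,dx$. Hence, whenever alternative (i) holds, one necessarily has $\int_{\Omega}(S^*-r)dx>0$. Next I would extract a competing sign from the eigenvalue inequality: testing the variational formula \eqref{princ-formula} with the normalized constant $\varphi\equiv|\Omega|^{-1/2}$ yields the elementary lower bound $\sigma(d_I,h)\ge \bar h=|\Omega|^{-1}\int_{\Omega}h\,dx$ for any $h\in C(\bar\Omega)$. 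Applying this with $h=\beta(S^*-r)$ and using $\sigma(d_I,\beta(S^*-r))\le 0$ produces $\int_{\Omega}\beta(S^*-r)dx\le 0$.

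The contradiction is then obtained case by case. If $\beta$ is a positive constant $\beta_0$, the second bound reads $\beta_0\int_{\Omega}(S^*-r)dx\le 0$, i.e. $\int_{\Omega}(S^*-r)dx\le 0$, contradicting the first step. If instead $S_0-r$ has a constant sign, then so does $S^*-r=\lambda^*(S_0-r)$ because $0<\lambda^*<1$. When $S_0-r\le 0$ everywhere, $\int_{\Omega}(S^*-r)dx\le 0$ contradicts the first step directly; when $S_0-r\ge 0$ everywhere, the function $S^*-r$ is nonnegative and, by the first step, not identically zero, so positivity of $\beta$ forces $\int_{\Omega}\beta(S^*-r)dx>0$, contradicting the second step. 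In each situation alternative (i) is impossible, so alternative (ii) must hold for every solution.

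I do not expect a serious obstacle here once Theorem \ref{theorem_massds} is available; the argument is essentially bookkeeping of two integral inequalities. The only point demanding care is the case split for ``constant sign,'' and in particular recognizing that the nonnegative case must be closed off through the weighted integral $\int_{\Omega}\beta(S^*-r)dx$ rather than the unweighted one, since the principal-eigenvalue lower bound controls only the former; in the constant-$\beta$ case the two coincide up to the factor $\beta_0$, which is exactly why that hypothesis also suffices.
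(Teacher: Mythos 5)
Your proof is correct: every step checks out, including the key lower bound $\sigma(d_I,h)\ge \bar h$ obtained by testing the variational formula \eqref{princ-formula} with the normalized constant, which is exactly the inference the paper itself sanctions via Lemma \ref{eig-value-lemma}, and the case analysis is complete (the continuity of $S^*-r=\lambda^*(S_0-r)$ together with $\int_\Omega(S^*-r)\,dx>0$ does give a set of positive measure where it is positive, so $\int_\Omega\beta(S^*-r)\,dx\ge\beta_m\int_\Omega(S^*-r)\,dx>0$ as you claim). Your route differs from the paper's in organization rather than in substance: the paper never argues by direct contradiction with alternative (i); instead it estimates the critical number $N^*_{S_0,r}$ from \eqref{N-critic}, showing $N^*_{S_0,r}=\int_\Omega r\,dx$ when $\beta$ is constant or $S_0\le r$, and $N^*_{S_0,r}\le\int_\Omega S_0\,dx$ when $S_0\ge r$, then combines $N>\int_\Omega r\,dx$ with $N>\int_\Omega S_0\,dx$ (forced by (A2), since $I_0\not\equiv 0$) and invokes the criterion ``$N>N^*_{S_0,r}$ implies (ii)'' already established in Theorem \ref{theorem_massds}. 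Since that criterion is itself proved by noting that (i) forces $N\le N^*_{S_0,r}$, your argument is essentially the paper's with the $N^*_{S_0,r}$ packaging unfolded; what your version buys is self-containedness (no appeal to the threshold machinery), while the paper's version yields the reusable quantitative bound on $N^*_{S_0,r}$ that feeds the subsequent remark about the open regime $\max\{\int_\Omega S_0\,dx,\int_\Omega r\,dx\}<N<N^*_{S_0,r}$. One small difference in the subcase $S_0\ge r$: you close it through the eigenvalue condition and the weighted integral $\int_\Omega\beta(S^*-r)\,dx$, whereas the paper closes it more cheaply via the pointwise bound $S^*=\lambda^*S_0+(1-\lambda^*)r\le S_0$ and $N>\int_\Omega S_0\,dx$, with no eigenvalue input at all; both are valid, the paper's being marginally more elementary there.
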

\begin{proof} Let $N^*_{S_0,r}$ be defined as in \eqref{N-critic}.  (1) If $\beta$ is constant, then for any $\lambda^*\in C(\overline{\Omega};[0,1])$, $\sigma(d_I,\beta\lambda^*(S_0-r))\le 0$ implies that $\int_{\Omega}\lambda^*(S_0-r)dx\le 0$. In this case,  
$$
\int_{\Omega}(\lambda^*S_0+(1-\lambda^*)r)dx=\int_{\Omega}\lambda^*(S_0-r)dx+\int_{\Omega}rdx\le \int_{\Omega}rdx,\quad \forall\ \lambda^*\in C(\overline{\Omega};[0,1]).
$$
So, we have $N^*_{S_0,r}=\int_{\Omega}rdx$.

 \smallskip

\noindent (2) If $S_0\le r$, then $N^*_{S_0,r}=\int_{\Omega}rdx$. 

 \smallskip

\noindent (3) If $S_0\ge r$, then $N^*_{S_0,r}\le \int_{\Omega}S_0dx$.

 \smallskip

\noindent In cases (1)-(3), we have $N^*_{S_0,r}\le \max\{\int_{\Omega}rdx,\int_{\Omega}S_0dx\}$. By hypothesis (A2), we always have $N>\int_{\Omega} S_0dx$.
Therefore if either of these scenarios holds,   $N>\int_{\Omega}rdx$ implies  $N>N^*_{S_0,r}$. The conclusion now follows from Theorem \ref{theorem_massds}.
\end{proof}


\begin{remark}  Let $N^*_{S_0,r}$ be defined as in \eqref{N-critic}.  Alternative (ii) of Theorem \ref{theorem_massds} always holds when $N>\int_{\Omega}rdx$ if $N^*_{S_0,r}\le \max\{\int_{\Omega}S_0dx,\int_{\Omega}rdx\}$. Sufficient conditions  ensuring the validity of  the latter inequality are given in Corollary \ref{cor1}. It remains an open problem to know whether alternative (i) of Theorem \ref{theorem_massds} may hold for some initial data satisfying $\max\{\int_{\Omega}S_0dx,\int_{\Omega}rdx\}<N<N^*_{S_0,r}$. { Note that it is possible to construct examples of} positive and continuous functions $S_0$ satisfying $\max\{\int_{\Omega}S_0dx,\int_{\Omega}rdx\}<N^*_{S_0,r}$. Whenever such $S_0$ is fixed, we can always select $I_0$ to be small enough such that $N=\int_{\Omega}(S_0+I_0)dx$ satisfies $\max\{\int_{\Omega}S_0dx,\int_{\Omega}rdx\}<N<N^*_{S_0,r}$.

\end{remark}


\subsection{Limiting the movement of infected people}\label{subsection32}
We consider the impact of limiting the movement of infected people on \eqref{model} with  mass action mechanism, that is, the long time behavior of the following degenerate system:
\begin{equation}\label{model-mass1}
\begin{cases}
\partial_t S=d_S\Delta S-\beta(x) S I+\gamma(x)  I, &x\in\Omega, t>0,\\
\partial_t I=\beta(x) S I-\gamma(x)  I,&x\in\bar\Omega, t>0,\\
\partial_\nu S=0, &x\in\partial\Omega,t>0,\\
S(x, 0)=S_0(x), \ I(x, 0)=I_0(x), &x\in\bar\Omega.
\end{cases}
\end{equation}

In the following result, we show the global existence of the solution of \eqref{model-mass1}.  We remark that the $S$ component of the solution is globally bounded while the $I$ component may blow up at $t=\infty$.

\begin{proposition}\label{prop_ex1}
Suppose that {\rm (A1)-(A2)} holds and $d_S>0$. Then \eqref{model-mass1} has a unique nonnegative global solution $(S, I)$, where 
$$S\in C([0, \infty), C(\overline{\Omega}))\cap C^1((0, \infty), {\rm Dom}_{\infty}(\Delta))\quad \text{and}\quad I\in C^1([0, \infty), C(\overline{\Omega})).$$ Moreover, $\|S(\cdot, t)\|_{L^\infty(\Omega)}\le \max\{\|S_0\|_{L^\infty(\Omega)}, \ r_M\}$ for all $t\ge 0$.
\end{proposition}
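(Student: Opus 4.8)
The plan is to follow the proof of Proposition \ref{prop} with the roles of $S$ and $I$ interchanged, while isolating the essential new feature: since the infected compartment no longer diffuses, there is no smoothing available for $I$ (no Alikakos-type $L^1\to L^\infty$ estimate as in Proposition \ref{prop}), so one cannot hope for a uniform-in-time bound on $I$. Consequently the global existence must rest entirely on an a priori bound for $S$ together with the linear-in-$I$ structure of the second equation.

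First I would establish local existence and uniqueness. Regularizing with a large constant $a>0$, I rewrite \eqref{model-mass1} in mild form
\begin{equation*}
\begin{cases}
S(\cdot,t)=e^{t(d_S\Delta-a)}S_0+\int_0^t e^{(t-s)(d_S\Delta-a)}\big(aS-\beta SI+\gamma I\big)(\cdot,s)\,ds,\\
I(\cdot,t)=e^{-at}I_0+\int_0^t e^{-a(t-s)}\big(a+\beta S-\gamma\big)I(\cdot,s)\,ds,
\end{cases}
\end{equation*}
where $\{e^{td_S\Delta}\}_{t\ge0}$ is the analytic semigroup on $C(\overline{\Omega})$ and the second line is the pointwise-in-$x$ integral form of the ODE $\partial_tI=(\beta S-\gamma)I$. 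The nonlinear terms are locally Lipschitz on bounded subsets of $[C([0,T],C(\overline{\Omega}))]^2$, so a contraction-mapping argument yields a unique solution on a maximal interval $[0,t_{\max})$; positivity of the semigroup together with the sign structure (the source $\gamma I\ge0$, the factor $S$ multiplying $-\beta I$, and $I=I_0e^{\int_0^t(\beta S-\gamma)}\ge0$) keeps $(S,I)$ in the nonnegative cone. Differentiating the second line directly gives $I\in C^1([0,t_{\max}),C(\overline{\Omega}))$ once $S$ is continuous, and then the forcing $-\beta SI+\gamma I$ is locally Hölder continuous in $t$, so \cite[Theorem 4.3.1 and Corollary 4.3.3]{pazy2012semigroups} gives $S\in C([0,t_{\max}),C(\overline{\Omega}))\cap C^1((0,t_{\max}),{\rm Dom}_{\infty}(\Delta))$.

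The crux is the a priori bound on $S$, which I would obtain by comparison. Writing the first equation as $\partial_tS-d_S\Delta S+\beta I\,S=\gamma I$, set $M_1:=\max\{\|S_0\|_{L^\infty(\Omega)},\,r_M\}$. Since $M_1\ge r_M\ge r(x)=\gamma(x)/\beta(x)$, we have $\gamma-\beta M_1\le0$ on $\overline{\Omega}$, so $w:=M_1-S$ satisfies $\partial_tw-d_S\Delta w+\beta I\,w=I(\beta M_1-\gamma)\ge0$ with $w(\cdot,0)=M_1-S_0\ge0$ and homogeneous Neumann data. Because $I$ is bounded on each slab $\Omega\times[0,T]$, $T<t_{\max}$, the zeroth-order coefficient $\beta I$ lies in $L^\infty$, and the parabolic maximum principle forces $w\ge0$, i.e. $\|S(\cdot,t)\|_{L^\infty(\Omega)}\le M_1$ on $[0,t_{\max})$.

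Finally I would rule out finite-time blow-up. On any $[0,T]$ with $T<t_{\max}$, the bound $S\le M_1$ makes the ODE coefficient satisfy $\beta S-\gamma\le\beta_M M_1$, hence $\|I(\cdot,t)\|_{L^\infty(\Omega)}\le\|I_0\|_{L^\infty(\Omega)}e^{\beta_M M_1 t}$ remains finite for finite $t$. Thus if $t_{\max}<\infty$ then both $\|S(\cdot,t)\|_{L^\infty(\Omega)}$ and $\|I(\cdot,t)\|_{L^\infty(\Omega)}$ stay bounded as $t\uparrow t_{\max}$, contradicting the blow-up criterion; therefore $t_{\max}=\infty$ and the solution is global, with the asserted bound on $S$. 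The main obstacle is precisely that no uniform bound on $I$ is available, so the linear structure of the $I$-equation — which permits only exponential growth on finite intervals even though $I$ may blow up as $t\to\infty$ — is what rescues global existence once $S$ is controlled.
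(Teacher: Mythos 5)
Your proposal is correct and takes essentially the same route as the paper's proof: a comparison-principle bound $\|S(\cdot,t)\|_{L^\infty(\Omega)}\le M_1:=\max\{\|S_0\|_{L^\infty(\Omega)},\,r_M\}$ (your explicit supersolution computation with $w=M_1-S$ merely spells out the comparison step the paper invokes directly), the resulting exponential-in-time bound $\|I(\cdot,t)\|_{L^\infty(\Omega)}\le\|I_0\|_{L^\infty(\Omega)}e^{\beta_M M_1 t}$ from the pointwise ODE for $I$, and local existence via a Banach fixed point on the shifted mild formulation with regularity from \cite[Theorem 4.3.1 and Corollary 4.3.3]{pazy2012semigroups}, extended globally by the a priori bounds. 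Your added remark that only finite-time boundedness of $I$ is needed (since $I$ may blow up as $t\to\infty$) matches the paper's intent exactly.
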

\begin{proof}
Suppose that a nonnegative solution exists. Let $M_1=\max\{\|S_0\|_{L^\infty(\Omega)}, \ r_M\}$. By the first equation of \eqref{model-mass1} and the comparison principle, $0\le S(x, t)\le M_1$ for all $x\in\bar\Omega$ and $t>0$, which implies that 
$$
0\le I(x, t)= I_0(x) e^{\int_0^t (\beta(x) S(x, s)-\gamma(x))ds}\le I_0(x)e^{\beta_MM_1t},  \ \ \forall x\in\bar\Omega, t\ge 0.
$$
This gives $\|I(\cdot, t)\|_{L^\infty(\Omega)}\le \|I_0\|_{L^\infty(\Omega)} e^{\beta_{M}M_1t}$ for all $t\ge 0$. The local existence of the solution of \eqref{model-mass1} can be proved similar to Proposition \ref{prop}. The a prior bound of the solution ensures the global existence of it. 
\end{proof}

To study the asymptotic behavior of the solutions of \eqref{model-mass1}, we  use the following Lyapunov function
$$
V(S, I)=\int_\Omega\left(\frac{1}{2} S^2+rI \right)dx. 
$$
If $(S, I)$ is the solution of \eqref{model-mass1}, it is easy to check that 
\begin{equation}\label{ly1}
\frac{d}{dt}V(S, I)=-d_S\int_\Omega |\triangledown S|^2dx-\int_\Omega \beta (S-r)^2Idx.
\end{equation}

\begin{remark}\label{remark_ly}
We point out that it is very easy to draw a false conclusion using the above Lyapunov function: firstly by the term $\int_\Omega \beta (S-r)^2Idx$ in \eqref{ly1}, one may conclude that either $S\to r$ or $I\to 0$ as $t\to\infty$; then by the term $\int_\Omega |\triangledown S|^2dx$,  $\triangledown S\to 0$ and so $I\to 0$  if $r$ is not constant. We will show that this intuition is indeed false in Theorem \ref{theorem_S}. Actually, it is possible that $S$ converges to some constant $\bar S$ with $r_m\le \bar S\le r_M$ and $I$ converges to some  measure supported at $\{x\in\bar\Omega: r(x)=\bar S\}$.
\end{remark}

To conclude that $\int_\Omega |\triangledown S|^2dx\to 0$ or $\int_\Omega \beta (S-r)^2Idx\to 0$ as $t\to\infty$, we will need the following lemma.

\begin{lemma}\label{New-Lem1}
    Suppose that (A1)-(A2) holds and $d_S>0$. Let $(S,I)$ be the solution of \eqref{model-mass1}. Then the following conclusions hold:
    \begin{itemize}
        \item[\rm (i)] The mapping $[1,\infty)\ni t\mapsto S(\cdot,t)-\overline{S(\cdot,t)}\in {L^1(\Omega)}$ is H\"older continuous. Furthermore if ${  n=1 }$, then the mapping $[1,\infty)\ni t\mapsto S(\cdot,t)-\overline{S(\cdot,t)}\in { {W^{1,2}(\Omega)}}$ is also H\"older continuous.
        \item[\rm(ii)] If ${  n=1 }$, the mapping $[1,\infty)\ni t\mapsto \int_{\Omega}\beta(S-r)^2Idx$ is H\"older continuous.
    \end{itemize}
\end{lemma}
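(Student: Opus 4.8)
The plan is to work with the spatial-mean-zero part $w(\cdot,t):=S(\cdot,t)-\overline{S(\cdot,t)}$, which lies in the invariant subspace $\mathcal Z$, and to run the fractional-power estimates of Lemma \ref{fractional-power-space-estimates}. Writing $\gamma=\beta r$, the $S$-equation in \eqref{model-mass1} becomes $\partial_t S=d_S\Delta S+g$ with $g:=-\beta(S-r)I$, and integrating over $\Omega$ gives $\tfrac{d}{dt}\overline S=\overline g$; hence $w$ is a mild solution of $\partial_t w=d_S\Delta_{|\mathcal Z}w+(g-\overline g)$ with $w(\cdot,t),\,g(\cdot,t)-\overline g(t)\in\mathcal Z$, the source being continuous in $L^1(\Omega)$ by the regularity in Proposition \ref{prop_ex1}. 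The decisive observation is that, although $I$ need not be bounded in $L^\infty$, the conservation law $\int_\Omega(S+I)\,dx=N$ and the $L^\infty$-bound on $S$ from Proposition \ref{prop_ex1} force $\sup_{t\ge0}\|g(\cdot,t)-\overline g(t)\|_{L^1(\Omega)}\le C_g<\infty$ (indeed $\le 2\beta_M M_1 N$); this is precisely the norm charged on the right-hand sides of \eqref{KBB0}--\eqref{KBB1}.

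First I would prove a uniform fractional-power bound. For $s\ge1$, representing $w$ over the unit window by
$$
w(\cdot,s)=e^{d_S\Delta_{|\mathcal Z}}w(\cdot,s-1)+\int_{s-1}^{s}e^{(s-\tau)d_S\Delta_{|\mathcal Z}}\big(g(\cdot,\tau)-\overline g(\tau)\big)\,d\tau
$$
and measuring in $\mathcal Z^{\alpha}$ through \eqref{KBB1} (with $t$ rescaled to $d_S t$), the first term is bounded because $\|w(\cdot,s-1)\|_{L^1}\le 2M_1|\Omega|$, and the integral term is controlled by $C_\alpha C_g\int_0^1(d_S\rho)^{-\alpha}e^{-\sigma_1 d_S\rho}\,d\rho<\infty$ for every $\alpha\in(0,1)$. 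This gives $\sup_{s\ge1}\|w(\cdot,s)\|_{\mathcal Z^{\alpha}}<\infty$. The $L^1$ Hölder continuity in (i) then follows from the decomposition
$$
w(\cdot,t)-w(\cdot,s)=\big(e^{(t-s)d_S\Delta_{|\mathcal Z}}-\mathrm{id}\big)w(\cdot,s)+\int_s^t e^{(t-\tau)d_S\Delta_{|\mathcal Z}}\big(g-\overline g\big)\,d\tau,
$$
whose first summand is $O((t-s)^{\alpha})$ by \eqref{KBB2} and the uniform $\mathcal Z^{\alpha}$-bound, while the second is $O(t-s)$ by \eqref{KBB0} and $C_g$. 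This step uses no dimensional restriction, so it covers the first claim of (i) for all $n$.

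For the $W^{1,2}$ statement I would interpolate: the moment inequality $\|u\|_{\mathcal Z^{\beta}}\le C\|u\|_{L^1}^{1-\beta/\alpha}\|u\|_{\mathcal Z^{\alpha}}^{\beta/\alpha}$ for fractional powers (see \cite{DanHenry}), combined with the uniform $\mathcal Z^{\alpha}$-bound and the $L^1$ Hölder continuity, gives $\mathcal Z^{\beta}$ Hölder continuity of $w$ for every $\beta<\alpha$. Choosing $\alpha\in(3/4,1)$ and $\beta\in(3/4,\alpha)$, the one-dimensional embeddings $\mathcal Z^{\beta}\hookrightarrow W^{1,2}(\Omega)\hookrightarrow L^\infty(\Omega)$ (valid since $\beta>3/4$) yield Hölder continuity of $t\mapsto w(\cdot,t)$ into $W^{1,2}(\Omega)$ and into $L^\infty(\Omega)$; since $\overline{S(\cdot,t)}-\overline{S(\cdot,s)}=\int_s^t\overline g\,d\tau=O(t-s)$, the map $t\mapsto S(\cdot,t)$ is itself Hölder continuous in $L^\infty(\Omega)$. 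For (ii), writing $\Phi(t):=\int_\Omega\beta(S-r)^2I\,dx$ and splitting
$$
\Phi(t)-\Phi(s)=\int_\Omega\beta\big[(S(t)-r)^2-(S(s)-r)^2\big]I(t)\,dx+\int_\Omega\beta(S(s)-r)^2\big[I(t)-I(s)\big]\,dx,
$$
the first integral is estimated by $\|S(\cdot,t)-S(\cdot,s)\|_{L^\infty}\int_\Omega I(t)\,dx$ and is Hölder by the $L^\infty$ continuity just obtained together with $\int_\Omega I\le N$, while for the second I would integrate the $I$-equation, $I(t)-I(s)=\int_s^t\beta(S-r)I\,d\tau$, and use $\int_\Omega I\le N$ to get a Lipschitz bound $CN(t-s)$.

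I expect the main difficulty to be the uniform bound $\sup_{s\ge1}\|w(\cdot,s)\|_{\mathcal Z^{\alpha}}<\infty$ for $\alpha$ near $1$: the temptation is to estimate $g$ in $L^\infty$, where no uniform control exists because $I$ may blow up as $t\to\infty$, and the entire argument hinges on staying in the $L^1$-based scale, where \eqref{KBB1} charges only $\|g-\overline g\|_{L^1}$. Pushing $\alpha$ past $3/4$ is exactly what unlocks the one-dimensional embeddings into $W^{1,2}(\Omega)$ and $L^\infty(\Omega)$, and is therefore the reason the finer conclusions are restricted to $n=1$.
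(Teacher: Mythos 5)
Your proof is correct and follows essentially the same route as the paper: both work with the mean-zero part in $\mathcal{Z}$, rely on the decisive $L^1$ bound $\sup_{t\ge 0}\|\beta(r-S)I-\overline{\beta(r-S)I}\|_{L^1(\Omega)}\le 2\beta_M M N$ (rather than any unavailable $L^\infty$ control of $I$) combined with the fractional-power estimates of Lemma \ref{fractional-power-space-estimates}, invoke the embedding $\mathcal{Z}^{\alpha}\hookrightarrow W^{1,2}(\Omega)$ for $\alpha\in(3/4,1)$ when $n=1$, and prove (ii) by the identical splitting of $\Phi(t)-\Phi(s)$ using the pointwise equation $\partial_t I=\beta(S-r)I$ and $\int_\Omega I\,dx\le N$. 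The only deviation is bookkeeping: the paper obtains H\"older continuity in $\mathcal{Z}^{\tilde{\alpha}}$ in a single direct estimate \eqref{KBB4} from the variation-of-constants formula on $[0,t]$, whereas you first derive a uniform $\mathcal{Z}^{\alpha}$ bound over a unit window and then upgrade the $L^1$ H\"older modulus to $\mathcal{Z}^{\beta}$, $\beta<\alpha$, via the moment inequality for fractional powers --- an equally valid, slightly more modular packaging of the same estimates.
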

 \begin{proof} 
    Setting $Z=S(\cdot,t)-\overline{S(\cdot,t)}$ and $F(\cdot,t)=\beta(r-S(\cdot,t))I(\cdot,t)-\overline{\beta(r-S(\cdot,t))I(\cdot,t)}$, it holds that $Z(\cdot,t), F(\cdot,t)\in \mathcal{Z}$ for all $t\ge 0$ and 
    $$
    \begin{cases}
        \partial_tZ=d_S\Delta Z+F(\cdot,t), & t> 0,\ x\in\Omega,\cr 
        \partial_{\nu}Z=0,\ & t>0,\ x\in\partial\Omega.
    \end{cases}
    $$
    Hence by the variation of constant formula, we have
    $$   Z(\cdot,t)=e^{td_S\Delta_{|\mathcal{Z}}}Z(\cdot,0)+\int_{0}^{t}e^{(t-\tau)\Delta_{|\mathcal{Z}}}F(\cdot,\tau)d\tau, \quad \forall\ t>0.
    $$
    By Proposition \ref{prop_ex1} with $M:=\max\{\|S_0\|_{L^{\infty}(\Omega)},r_M\}$, it holds that 
    \begin{equation}\label{KBB3}
        \|F(\cdot,t)\|_{L^1(\Omega)}\le 2\beta_{M}M\|I(\cdot,t)\|_{L^1(\Omega)}\le M_1:=2NM\beta_M,\quad \forall\ t\ge 0.
    \end{equation}
    
\noindent {\rm (i)}       Fix $0\le\tilde{\alpha}< \tilde{\alpha}+\alpha<1$. For any $t\ge 1$ and $h>0$, by \eqref{KBB0}-\eqref{KBB2} and \eqref{KBB3},
    \begin{align}\label{KBB4}
       & \|Z(t+h)-Z(t)\|_{\mathcal{Z}^{\tilde{\alpha}}}\cr 
       \le & \|(e^{hd_S\Delta_{|\mathcal{Z}}}-{\rm id})e^{td_S\Delta_{|\mathcal{Z}}}Z_0\|_{\mathcal{Z}^{\tilde{\alpha}}} +\int_{0}^{t}\|(e^{hd_S\Delta_{|\mathcal{Z}}}-{\rm id})e^{(t-\tau)d_S\Delta_{|\mathcal{Z}}}F(\cdot,\tau)\|_{\mathcal{Z}^{\tilde{\alpha}}}d\tau\cr 
       &+\int_{0}^h\|e^{(h-\tau)d_S\Delta_{|\mathcal{Z}}}F(\cdot,t+\tau)\|_{\mathcal{Z}^{\tilde{\alpha}}}d\tau\cr 
        \le& C_{\alpha}d_S^{\alpha}h^{\alpha}\Big(\|e^{td_S\Delta_{|\mathcal{Z}}}Z_0\|_{\mathcal{Z}^{\tilde{\alpha}+\alpha}}+\int_{0}^t\|e^{(t-\tau)d_S\Delta_{|\mathcal{Z}}}F(\cdot,\tau)\|_{\mathcal{Z}^{\tilde{\alpha}+\alpha}}d\tau\Big)\cr &+C_{\tilde{\alpha}}\int_{0}^h(d_S(h-\tau))^{-
        \tilde{\alpha}}e^{-d_S\sigma(h-\tau)}\|F(\cdot,t+\tau)\|_{L^1(\Omega)}d\tau\cr 
        \le& C_{\alpha}C_{\tilde{\alpha}+\alpha}d_{S}^{-\tilde{\alpha}}h^{\alpha}\Big(t^{-(\tilde{\alpha}+\alpha)}e^{-\sigma td_S}\|Z_0\|_{L^1(\Omega)}+\int_{0}^t(t-\tau)^{-(\tilde{\alpha}+\alpha)}e^{-d_S\sigma(t-\tau)}\|F(\cdot,\tau)\|_{L^1(\Omega)}d\tau\Big)\cr 
        &+C_{\tilde{\alpha}}d_{S}^{-\tilde{\alpha}}M_1\int_{0}^h\tau^{-\tilde{\alpha}}e^{-d_S\sigma\tau}d\tau\cr 
        \le & C_{\alpha}C_{\alpha+\tilde{\alpha}}d_{S}^{-\tilde{\alpha}}h^{\alpha}\Big(\|Z_0\|_{L^1(\Omega)}+M_1\int_{0}^{\infty}\tau^{-(\tilde{\alpha}+\alpha)}e^{-d_S\sigma \tau}d\tau\Big)+C_{\tilde{\alpha}}d_{S}^{-\tilde{\alpha}}M_1\int_{0}^{h}\tau^{-\tilde{\alpha}}d\tau\cr
        \le &{M}_{\alpha,\tilde{\alpha}}(h^{\alpha}+h^{1-\tilde{\alpha}}).
    \end{align}
    In particular, if $\tilde{\alpha}=0$, we get that 
    $$ 
    \|Z(t+h)-Z(t)\|_{L^1(\Omega)}\le {M}_{\alpha,\tilde{\alpha}}(h^{\alpha}+h),\quad \forall\ t\ge 1, \ h>0,
    $$
   which yields the first assertion of (i). On the other, if ${  n=1 }$, choosing $\alpha\in(\frac{3}{4},1)$, it follows from \cite[Theorem 1.6.1]{DanHenry} that $\mathcal{Z}^{\alpha}$ is continuously embedded in $W^{1,2}(\Omega)$. Hence the last assertion of {\rm (i)} also follows from \eqref{KBB4}.

    {\rm (ii)} Suppose ${  n=1 }$. Let $G(t):=\int_{\Omega}\beta(S-r)^2Idx$ for $t\ge 0$. Since ${  n=1 }$, $W^{1,2}(\Omega)$ is compactly embbedded in $ C(\overline{\Omega})$.  Since the mapping $[1,\infty)\ni t\mapsto Z(\cdot,t)\in{W^{1,2}(\Omega)}$ is H\"older continuous by {\rm (i)},  the mapping $[1,\infty)\ni t\mapsto Z(\cdot,t)\in{C(\overline{\Omega})}$ is also H\"older continuous. This together with the fact that 
    $$
    \sup_{t>0}\Big|\frac{d}{dt}\int_{\Omega}S(x,t)dx \Big|=\sup_{t>0}\Big|\int_{\Omega}\beta(S(x,t)-r(x))I(x,t)dx \Big|\le MN\beta_M
    $$
    implies that the mapping $[1,\infty)\ni t\mapsto S(\cdot,t)\in C(\overline{\Omega}) $ is also H\"older continuous. Thus there exist $0<\tau<1$ and $c>0$ such that 
    $$
    \|S(\cdot,t+h)-S(\cdot,t)\|_{L^{\infty}(\Omega)}\le c|h|^{\tau},\quad t\ge 1.
    $$
    So for any $t\ge 1$ and $h>0$, we have
    \begin{align*}
        |G(t+h)-G(t)|\le& \beta_M\|(S(\cdot,t+h)-r)^2-(S(\cdot,t)-r)^2\|_{\infty}\int_{\Omega}I(x,t+h)dx\cr &+\beta_M\|(S(\cdot,t)-r)^2\|_{L^{\infty}(\Omega)}\int_{\Omega}|I(x,t+h)-I(x,t)|dx\cr
        \le& 2cMNh^{\tau}\beta_M+M^2\beta_M\int_{\Omega}\int_{0}^h\beta|S(x,t+s)-r(x)|I(x,t+s)dsdx\cr
        \le & 2cMNh^{\tau}\beta_M+M^3\beta_M^2\int_{\Omega}\int_{0}^hI(x,t+s)dsdx\cr
        =&2cMNh^{\tau}\beta_M+M^3\beta_M^2\int_{0}^h\int_{\Omega}I(x,t+s)dxds\cr
        \le & 2cMNh^{\tau}\beta_M+M^3\beta_M^2Nh\le (c+M^2\beta_M)M_1(h^{\tau}+h),
    \end{align*}
    which yields the desired result.
\end{proof}

\begin{lemma}\label{lemma_c}
Suppose that ${  n=1 }$, {\rm (A1)-(A2)} holds and $d_S>0$. Let $(S, I)$ be the solution of \eqref{model-mass1}. Then, $\|\triangledown S(\cdot, t)\|_{L^2(\Omega)}\to 0$ and $\int_\Omega \beta (S(x, t)-r)^2I(x, t) dx\to 0$ as $t\to\infty$.
\end{lemma}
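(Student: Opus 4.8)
The plan is to read off the two claimed limits directly from the Lyapunov identity \eqref{ly1}, combined with the Hölder continuity supplied by Lemma \ref{New-Lem1} and the elementary Lemma \ref{lemma_inf0}. First I would observe that the functional $V(S,I)$ is nonnegative and bounded above on the trajectory: by Proposition \ref{prop_ex1} we have $\|S(\cdot,t)\|_{L^\infty(\Omega)}\le M:=\max\{\|S_0\|_{L^\infty(\Omega)},r_M\}$ for all $t\ge 0$, while $\int_\Omega I\,dx\le N$ by (A2), so $0\le V(S,I)\le \tfrac12 M^2|\Omega|+r_M N$. Since the right-hand side of \eqref{ly1} is nonpositive, $t\mapsto V(S(\cdot,t),I(\cdot,t))$ is nonincreasing and hence converges to a finite limit as $t\to\infty$. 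Integrating \eqref{ly1} in time therefore yields
\begin{equation*}
\int_0^\infty\Big(d_S\int_\Omega|\nabla S|^2\,dx+\int_\Omega\beta(S-r)^2 I\,dx\Big)\,dt=V(S_0,I_0)-\lim_{t\to\infty}V(S(\cdot,t),I(\cdot,t))<\infty.
\end{equation*}
As both integrands are nonnegative and $d_S>0$, this gives $\int_1^\infty\|\nabla S(\cdot,t)\|_{L^2(\Omega)}^2\,dt<\infty$ and $\int_1^\infty\int_\Omega\beta(S-r)^2 I\,dx\,dt<\infty$.

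Next I would set $\phi_1(t):=\|\nabla S(\cdot,t)\|_{L^2(\Omega)}^2$ and $\phi_2(t):=\int_\Omega\beta(S(\cdot,t)-r)^2 I(\cdot,t)\,dx$, both nonnegative with finite integral on $[1,\infty)$, and verify that each is Hölder continuous on $[1,\infty)$ so that Lemma \ref{lemma_inf0} applies (after translating the time origin to $t=1$, i.e. applying the lemma to $s\mapsto\phi_i(s+1)$). For $\phi_2$ this is exactly Lemma \ref{New-Lem1}(ii), which is where $n=1$ is used. For $\phi_1$, I would write $Z(\cdot,t)=S(\cdot,t)-\overline{S(\cdot,t)}$, so that $\nabla S=\nabla Z$ and $\|\nabla S(\cdot,t)\|_{L^2(\Omega)}\le\|Z(\cdot,t)\|_{W^{1,2}(\Omega)}$; Lemma \ref{New-Lem1}(i) asserts that for $n=1$ the map $t\mapsto Z(\cdot,t)\in W^{1,2}(\Omega)$ is Hölder continuous, whence $t\mapsto\nabla S(\cdot,t)\in L^2(\Omega)$ is Hölder continuous. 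To pass from $\|\nabla S\|_{L^2}$ to its square I would use the bound $\big|\,\|u\|^2-\|v\|^2\big|\le(\|u\|+\|v\|)\|u-v\|$ together with the uniform estimate $\sup_{t\ge1}\|\nabla S(\cdot,t)\|_{L^2(\Omega)}<\infty$. The latter is available from the same mild-solution machinery as in Lemma \ref{New-Lem1}: the representation $Z(\cdot,t)=e^{td_S\Delta_{|\mathcal Z}}Z_0+\int_0^t e^{(t-\tau)d_S\Delta_{|\mathcal Z}}F(\cdot,\tau)\,d\tau$ combined with \eqref{KBB1} and the uniform $L^1$-bound \eqref{KBB3} on $F$ gives $\sup_{t\ge1}\|Z(\cdot,t)\|_{\mathcal Z^\alpha}<\infty$ for $\alpha\in(\tfrac34,1)$, and $\mathcal Z^\alpha\hookrightarrow W^{1,2}(\Omega)$ since $n=1$. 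Consequently $\phi_1$ is Hölder continuous on $[1,\infty)$.

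Finally, applying Lemma \ref{lemma_inf0} to $\phi_1$ and to $\phi_2$ yields $\phi_1(t)\to0$ and $\phi_2(t)\to0$ as $t\to\infty$, that is, $\|\nabla S(\cdot,t)\|_{L^2(\Omega)}\to0$ and $\int_\Omega\beta(S-r)^2 I\,dx\to0$, which is the assertion. The only substantive work in this scheme is the Hölder continuity of $\phi_1$ and $\phi_2$, and I expect that to be the main obstacle; it is precisely where the hypothesis $n=1$ enters, through the embeddings $\mathcal Z^\alpha\hookrightarrow W^{1,2}(\Omega)\hookrightarrow C(\overline\Omega)$ exploited in Lemma \ref{New-Lem1}. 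Since those continuity statements are already established there, the remainder reduces to the elementary bookkeeping above (convergence of $V$, nonnegativity and finite time-integral of the two quantities, and the squaring estimate for $\phi_1$).
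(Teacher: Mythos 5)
Your proof is correct and takes essentially the same route as the paper: integrate the Lyapunov identity \eqref{ly1} to obtain the finiteness of the two time-integrals (the paper's \eqref{bd}), then conclude via Lemma \ref{lemma_inf0} combined with the H\"older continuity from Lemma \ref{New-Lem1}. The only difference is that you make explicit the uniform bound $\sup_{t\ge 1}\|Z(\cdot,t)\|_{\mathcal{Z}^{\alpha}}<\infty$ (hence $\sup_{t\ge 1}\|\nabla S(\cdot,t)\|_{L^2(\Omega)}<\infty$ for $n=1$) needed to pass from H\"older continuity of $t\mapsto \nabla S(\cdot,t)\in L^2(\Omega)$ to that of its squared norm, a bookkeeping step the paper leaves implicit.
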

\begin{proof}
Integrating \eqref{ly1} over $(0, \infty)$ and by $\int_\Omega I dx\le N$ and Proposition \ref{prop_ex1}, we have 
\begin{equation}\label{bd}
\int_0^\infty \|\triangledown S(\cdot, t)\|_{L^2(\Omega)}^2 dt<\infty \ \text{and} \ \int_0^\infty\int_\Omega \beta (S(x, t)-r)^2I(x, t) dxdt<\infty.
\end{equation}
Then the claim follows from Lemmas \ref{lemma_inf0} and \ref{New-Lem1}.
\end{proof}

We are ready to state the main result concerning the global dynamics of \eqref{model-mass1}. Motivated by the biological meaning and  expression of $\mathcal{R}^1_0$, as in \cite{Allen,WuZou}, we call $H^+$ and $H^-$ as the  high-risk and low-risk sites, respectively, where
$$
H^+=\left\{x\in\bar\Omega: \ \frac{N}{|\Omega|}\beta(x)-\gamma(x)>0 \right\}\ \ 
\text{and}\ \ 
H^-=\left\{x\in\bar\Omega: \ \frac{N}{|\Omega|}\beta(x)-\gamma(x)<0 \right\}.
$$

Define $\tilde{r}_m=\inf_{x\in\{I_0>0\}}r(x)$
and $\mathcal{M}:=\{x\in\overline{\{I_0>0\}}\ :\ r(x)=\tilde{r}_m \}$. Biologically, $\mathcal{M}$ consists with all the points of the highest risk {relative to $I_0$}. We will show that the infected people will concentrate on $\mathcal{M}$ when limiting their movement.

\begin{theorem}\label{theorem_S}
Suppose that  {\rm (A1)-(A2)} holds and $d_S>0$. Let $(S, I)$ be the solution of \eqref{model-mass1}.  Then, we have
\begin{equation}\label{theorem_S-eq}
    \lim_{t\to\infty}\|S(\cdot,t)-\overline{S(\cdot,t)}\|_{L^{p}(\Omega)}=0, \quad \forall\   p\in[1,\infty).
\end{equation}
If in addition ${  n=1 }$, then the limit in \eqref{theorem_S-eq} also holds for $p=\infty$, 
\begin{equation}\label{KI}
\lim_{t\to\infty}\|I(\cdot,t)\|_{L^{\infty}(K\cup \{I_0=0\})}= 0
\end{equation}
for any compact set $K\subset H^{-}$ if $H^-$ is not empty, and the following conclusions hold:
\begin{itemize}
\item[\rm (i)] If $H^+\cap \{I_0>0\}=\emptyset$, then  $\|S(\cdot,t)-N/{|\Omega|}\|_{L^\infty(\Omega)}\to 0$ and $\int_{\Omega}I(x,t)dx\to 0$ as $t\to\infty$;

\item[\rm (ii)]
If $H^+\cap \{I_0>0\}\ne\emptyset$, then there is a sequence $\{t_k\}_{k\ge 1}$ converging to infinity such that $\|S(\cdot,t_k)-\tilde{r}_m\|_{L^\infty(\Omega)}\to 0$, $\int_{\Omega}I(x,t_k)dx\to N-|\Omega|\tilde{r}_m$ and $I(\cdot,t_k)\to 0 $ as $k\to\infty$ almost everywhere on $\{ x\in\Omega :\ r(x)\ne \tilde{r}_m\}$. In particular,  if  $\mathcal{M}=\{x_1,\cdots,x_L\}\subset\{I_0>0\}$, then $ I(\cdot,t_k)\to (N-|\Omega|\tilde{r}_m)\sum_{i=1}^Lc_i\delta_{x_i}$ weakly as $k\to\infty$, where $0\le c_i\le1$, $\sum_{i=1}^Lc_i=1$, and  $\delta_{x_i}$ is the Dirac measure centered at $x_i$.
\end{itemize}
\end{theorem}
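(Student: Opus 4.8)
The plan is to extract everything from the Lyapunov identity \eqref{ly1} together with the Hölder/compactness estimates of Lemma \ref{New-Lem1}. Integrating \eqref{ly1} over $(0,\infty)$ and using $V\ge 0$ gives $\int_0^\infty\|\nabla S\|_{L^2(\Omega)}^2\,dt<\infty$ and $\int_0^\infty\int_\Omega\beta(S-r)^2I\,dx\,dt<\infty$ for every $n$. Writing $Z:=S-\overline{S}$ and using the Poincaré--Wirtinger inequality, $\int_0^\infty\|Z(\cdot,t)\|_{L^1(\Omega)}^2\,dt<\infty$. By Lemma \ref{New-Lem1}(i) the map $t\mapsto Z(\cdot,t)\in L^1(\Omega)$ is Hölder continuous, and by Proposition \ref{prop_ex1} it is uniformly bounded, so $\phi(t):=\|Z(\cdot,t)\|_{L^1(\Omega)}^2$ is Hölder continuous for large $t$; applying Lemma \ref{lemma_inf0} to $\phi(\cdot+1)$ forces $\|Z(\cdot,t)\|_{L^1(\Omega)}\to 0$, and interpolating against the uniform $L^\infty$ bound of $S$ yields \eqref{theorem_S-eq} for all $p\in[1,\infty)$. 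When $n=1$, Lemma \ref{lemma_c} gives $\|\nabla S(\cdot,t)\|_{L^2(\Omega)}\to 0$, hence $\|Z\|_{L^2(\Omega)}\to 0$ by Poincaré, so $\|Z\|_{W^{1,2}(\Omega)}\to 0$ and the embedding $W^{1,2}(\Omega)\hookrightarrow C(\overline\Omega)$ upgrades this to $p=\infty$.

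For the remaining $n=1$ statements I would first record two consequences. Since $I(x,t)=I_0(x)\exp\big(\int_0^t(\beta S-\gamma)\,ds\big)$, we have $I\equiv 0$ on $\{I_0=0\}$; and on a compact $K\subset H^-$ one has $r\ge N/|\Omega|+\epsilon_0$, while $\overline{S(\cdot,t)}=(N-\int_\Omega I)/|\Omega|\le N/|\Omega|$ and $\|S-\overline S\|_{\infty}\to 0$, so $S-r\le-\epsilon_0/2$ on $K$ for large $t$ and $I$ decays exponentially there; this proves \eqref{KI}. Next, expanding $(S-r)^2=(\overline S-r)^2+2(\overline S-r)Z+Z^2$ and using $\|Z\|_\infty\to 0$, $\int_\Omega I\le N$, together with $\int_\Omega\beta(S-r)^2I\to 0$ from Lemma \ref{lemma_c}, gives the crucial relation $\int_\Omega\beta(\overline{S(\cdot,t)}-r)^2I(\cdot,t)\,dx\to 0$.

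The core is a concentration dichotomy for $m(t):=\overline{S(\cdot,t)}\in[0,N/|\Omega|]$. If $m(t_k)\to\mu$ along a sequence, the relation above yields $\int_\Omega\beta(\mu-r)^2I(\cdot,t_k)\to 0$, so for every $\epsilon>0$ the mass of $I(\cdot,t_k)$ on $\{|r-\mu|\ge\epsilon\}$ vanishes. Two one-sided bounds pin down $\mu$. First, $\mu\ge\tilde r_m$: otherwise $\{r=\mu\}\cap\overline{\{I_0>0\}}=\emptyset$ (since $r\ge\tilde r_m>\mu$ on $\overline{\{I_0>0\}}$) forces $\int_\Omega I(\cdot,t_k)\to 0$, contradicting $\int_\Omega I(\cdot,t_k)=N-|\Omega|m(t_k)\to N-|\Omega|\mu>0$. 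Second, $\liminf_t m(t)\le\tilde r_m$: otherwise $m(t)\ge\tilde r_m+\delta$ eventually, and on the open positive-measure set $\{x\in\{I_0>0\}:r(x)<\tilde r_m+\delta/4\}$ one has $S-r\ge\delta/2$, so $I$ grows exponentially and $\int_\Omega I$ exceeds $N$, impossible. For part (i), $H^+\cap\{I_0>0\}=\emptyset$ means $\tilde r_m\ge N/|\Omega|$, so the first bound makes every subsequential limit of $m$ equal to $N/|\Omega|$; hence $m(t)\to N/|\Omega|$, i.e. $\int_\Omega I\to 0$ and $\|S-N/|\Omega|\|_\infty\to 0$. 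For part (ii), $\tilde r_m<N/|\Omega|$ and the two bounds give $\liminf_t m(t)=\tilde r_m$; choosing $t_k$ with $m(t_k)\to\tilde r_m$ delivers $\|S(\cdot,t_k)-\tilde r_m\|_\infty\to 0$ and $\int_\Omega I(\cdot,t_k)\to N-|\Omega|\tilde r_m$, while the concentration statement with $\mu=\tilde r_m$ gives $L^1$-smallness of $I(\cdot,t_k)$ on each $\{r\ge\tilde r_m+\epsilon\}$, hence, after passing to a subsequence, $I(\cdot,t_k)\to 0$ a.e. on $\{r\ne\tilde r_m\}$.

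Finally, when $\mathcal M=\{x_1,\dots,x_L\}\subset\{I_0>0\}$ is finite, I would view $\nu_k:=I(\cdot,t_k)\,dx$ as nonnegative measures of total mass tending to $m_*:=N-|\Omega|\tilde r_m$. Since $\{|r-\tilde r_m|\le\epsilon\}\cap\overline{\{I_0>0\}}$ decreases to $\mathcal M$ as $\epsilon\to 0$, the concentration estimate shows $\nu_k(\Omega\setminus U)\to 0$ for every open $U\supset\mathcal M$; weak-$\ast$ compactness of bounded measures on the compact space $\overline\Omega$ together with lower semicontinuity gives a subsequential weak limit $\nu$ supported on $\mathcal M$, i.e. $\nu=\sum_i\mu_i\delta_{x_i}$, and testing against the constant function $1$ yields $\sum_i\mu_i=m_*$; writing $c_i=\mu_i/m_*$ gives the stated form. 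The main obstacle is exactly the part (ii) analysis flagged in Remark \ref{remark_ly}: the naive reading of \eqref{ly1} (that $\nabla S\to 0$ forces $I\to 0$) is false, and the correct behaviour---$S$ flattening to the constant $\tilde r_m$ while $I$ concentrates as a measure on the highest-risk set $\mathcal M$---must be captured through the two-sided vanishing/blow-up dichotomy for $m(t)$ and through measure-valued, merely subsequential, convergence, since the degeneracy $d_I=0$ destroys compactness of the $I$-component.
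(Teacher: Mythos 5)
Your proof is correct, and while it shares the paper's skeleton for the first half --- the integrated Lyapunov identity \eqref{ly1}, Poincar\'e, the H\"older continuity from Lemma \ref{New-Lem1} fed into Lemma \ref{lemma_inf0}, interpolation for $p\in[1,\infty)$, the $W^{1,2}\hookrightarrow C(\overline\Omega)$ upgrade when $n=1$, the exponential decay on compact subsets of $H^-$, and the final Riesz/weak-$\ast$ extraction --- your treatment of the core of (i)--(ii) is genuinely different. The paper establishes the key upper bound $\limsup_{t\to\infty}\|I(\cdot,t)\|_{L^1(\Omega)}\le (N-|\Omega|\tilde r_m)_+$ (its \eqref{BS2}) by integrating the pointwise inequality \eqref{BS3} over $\{I_0>0\}$ and comparing with a nonautonomous logistic ODE driven by the averaged rate $F(t)$ of \eqref{F-new-eq}, then deduces $\liminf_t\min_x S(x,t)\ge\tilde r_m$ from conservation of mass; it only invokes the dissipation limit $\int_\Omega\beta(S-r)^2I\,dx\to 0$ of Lemma \ref{lemma_c} at the very end, for the a.e.\ and measure-concentration conclusions. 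You instead derive $\int_\Omega\beta(\overline{S}-r)^2I\,dx\to 0$ from Lemma \ref{lemma_c} plus the flattening $\|S-\overline S\|_\infty\to 0$, and extract \emph{both} bounds on $m(t)=\overline{S(\cdot,t)}$ from this single concentration relation: any subsequential limit $\mu$ with $\mu<N/|\Omega|$ must satisfy $\mu\ge\tilde r_m$ (else the $L^1$ mass of $I$ on the support would vanish while conservation forces mass $N-|\Omega|\mu>0$), while $\liminf_t m(t)\le\tilde r_m$ follows from the same exponential-growth contradiction on $\{x\in\{I_0>0\}: r(x)<\tilde r_m+\delta/4\}$ that the paper uses for its \eqref{BS7}. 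This is more economical --- it reuses the Lyapunov dissipation and avoids the $F(t)$ comparison machinery entirely --- whereas the paper's ODE route yields the slightly stronger unconditional $L^1$ bound \eqref{BS2} along the full time axis without needing Lemma \ref{lemma_c} at that stage. Two small points of care in your write-up: your ``first bound $\mu\ge\tilde r_m$'' is only a contradiction when $N-|\Omega|\mu>0$, i.e.\ $\mu<N/|\Omega|$, so it should be stated in that conditional form (as your applications in (i) and (ii) implicitly do, since in (i) it forces every subsequential limit to equal $N/|\Omega|$ and in (ii) one has $\mu\le\tilde r_m<N/|\Omega|$); and the a.e.\ convergence on $\{r\ne\tilde r_m\}$ requires the diagonal subsequence over the sets $\{r\ge\tilde r_m+1/j\}$ together with the observation that $I\equiv 0$ on $\{r<\tilde r_m\}$ because $r\ge\tilde r_m$ on $\{I_0>0\}$ --- both of which you have in substance.
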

\begin{proof}
By the Poincar\'e inequality, there is a positive constant $\lambda_0>0$ such that 
\begin{equation}\label{KBB6}
\|S(\cdot,t)-\overline{S(\cdot,t)}\|_{L^2(\Omega)}\le \lambda_0\|\nabla S(\cdot,t)\|_{L^2(\Omega)},\quad \forall\ t>0.
\end{equation}
 Hence using  H\"older's inequality and recalling \eqref{ly1}, we get that 
\begin{align*}
\int_0^{\infty}\|S(\cdot,t)-\overline{S(\cdot,t)}\|_{L^1(\Omega)}^2dt\le & |\Omega|\int_{0}^{\infty}\|S(\cdot, t)-\overline{S(\cdot,t)}\|_{L^2(\Omega)}^2dt\cr 
\le&  \lambda_0^2|\Omega|\int_{0}^{\infty}\|\nabla S(\cdot, t)\|_{L^2(\Omega)}^2dt<\infty.
\end{align*}
Therefore by Lemmas \ref{lemma_inf0} and \ref{New-Lem1},  we obtain  $\|S(\cdot, t)-\overline{S(\cdot,t)}\|_{L^1(\Omega)}^2\to 0$ as $t\to\infty$. By $\sup_{t\ge 1}\|S(\cdot,t)\|_{\infty}<\infty$ and H\"older's inequality,  \eqref{theorem_S-eq} holds.

From this point, we shall suppose that ${  n=1 }$ and complete the proof of the theorem. In view of Lemma \ref{lemma_c} and inequality \eqref{KBB6}, we have  
$$
\lim_{t\to\infty}\|S(\cdot,t)-\overline{S(\cdot,t)}\|_{W^{1,2}(\Omega)}=0.
$$
Since ${  n=1 }$, $W^{1,2}(\Omega)$ is compactly embedded into $C(\bar{\Omega})$.
Therefore,  
\begin{equation}\label{BS1}
    \lim_{t\to\infty}\big\| S(\cdot,t)-\overline{S(\cdot,t)}\big\|_{L^\infty(\Omega)}=0.
\end{equation}
Fix $\varepsilon>0$. By $N=\int_{\Omega}(S+I)dx$ for all $t\ge 0$ and \eqref{BS1}, there is $t_{\varepsilon}>0$ such that 
\begin{equation}\label{BS1-2}
S(x,t)\le \frac{1}{|\Omega|}\Big(N+\varepsilon-\int_{\Omega}I(x,t)dx\Big),\quad \forall\ (x, t)\in \bar\Omega\times [t_{\varepsilon}, \infty).
\end{equation}
Let $K\subset H^{-}$ be a compact set if $H^-$ is not empty. By the definition of $H^-$, we have $\min_{x\in K}r(x)>N/|\Omega|$. If $0<\varepsilon\ll 1$ is chosen such that $\eta_{\varepsilon}:=(N+\varepsilon)/|\Omega|-\min_{x\in K}r(x)<0$, then
$$ 
\partial_tI(x,t)\le \beta(x)\Big(\frac{N+\varepsilon}{|\Omega|}-\min_{x\in K}r(x)\Big)I(x,t)\le \beta_m\eta_{\varepsilon}I(x,t),  \quad (x, t)\in K\times [t_{\varepsilon}, \infty).
$$
It follows that
$$
\|I(\cdot,t)\|_{L^{\infty}(K)}\le e^{(t-t_\varepsilon)\beta_m\eta_{\varepsilon}}\|I(\cdot,t_{\varepsilon})\|_{L^{\infty}(K)}\to 0,\quad \text{as}\ t\to\infty.
$$
This together with the fact that $I(x,t)=0$ for all $t\ge 0$ and $x\in\{I_0=0\}$ completes the proof of \eqref{KI}.

To prove {\rm (i)}-{\rm (ii)}, we claim that
\begin{equation}\label{BS2}   \limsup_{t\to\infty}\|I(\cdot,t)\|_{L^1(\Omega)}\le (N-|\Omega|\tilde{r}_{m})_+.
\end{equation}
Since $\int_{\{I_0=0\}}I(x,t)dx=0$ for all $t\ge 0$, it suffices to show $\limsup_{t\to\infty}\|I(\cdot,t)\|_{L^1(\{I_0>0\})}\le (N-|\Omega|\tilde{r}_{m})_+$.
To see this, observe from \eqref{BS1-2} that 
\begin{equation}\label{BS3}
    \partial_tI\le \frac{\beta}{|\Omega|}\Big( (N-|\Omega|\tilde{r}_{m})_++\varepsilon-\|I(\cdot,t)\|_{L^1(\{I_0>0\})}\Big)I,\quad \forall\ t\ge t_{\varepsilon},\ x\in \{I_0>0\},
\end{equation}
where $\tilde{r}_m=\inf_{x\in\{I_0>0\}}r(x)$. Let 
\begin{equation}\label{F-new-eq}
F(t):=\frac{\int_{\{I_0>0\}}\beta I(x,t)dx}{\int_{\{I_0>0\}}I(x,t)dx}, \ \ \forall \ t\ge 1.
\end{equation} 
Then $F(t): [1, \infty)\to \mathbb{R}_+$  is Locally Lipschitz continuous with $\beta_m\le F(t)\le \beta_M$, $t\ge 1$. Integrating \eqref{BS3} over $\{I_0>0\}$, for any $t>t_{\varepsilon}$, we get 
\begin{align*}\label{BS5}
    \frac{d}{dt}\|I\|_{L^1(\{I_0>0\})}\le& \frac{1}{|\Omega|} \Big((N-|\Omega|\tilde{r}_m)_++\varepsilon-\|I\|_{L^1(\{I_0>0\})}\Big)\int_{\{I_0>0\}}\beta Idx\cr 
    =& \frac{F(t)}{|\Omega|}\Big((N-|\Omega|\tilde{r}_{m})_++\varepsilon-\|I\|_{L^1(\{I_0>0\})}\Big)\|I\|_{L^1(\{I_0>0\})},
\end{align*}
where $F$ is defined by \eqref{F-new-eq}. Let $v(t)$ be the solution of 
\begin{equation*}
\left\{
\begin{array}{lll}
 v'(t)&=&  \ds\frac{F(t)}{|\Omega|}\Big((N-|\Omega|\tilde{r}_{m})_++\varepsilon-v(t)\Big)v(t),\ \ t>t_\varepsilon,\\
 v(t_\varepsilon)&=& \|I(\cdot, t_\varepsilon)\|_{L^1(\{I_0>0\})}.
\end{array}
\right.
\end{equation*}
By the comparison principle, we know that
$$
\|I(\cdot,t)\|_{L^1(\{I_0>0\})}\le v(t),\quad \forall t\ge t_{\varepsilon}.
$$
Since  $v(t)\to (N-|\Omega|\tilde{r}_{m})_++\varepsilon$ as $t\to\infty$ (because $F\ge \beta_m>0$ and $v(t_\varepsilon)>0$)  and $\varepsilon$ is { arbitrarily} chosen, \eqref{BS2} holds.

{\rm (i)} Suppose that $H^+\cap\{I_0>0\}=\emptyset$. Then we have $(N-|\Omega|\tilde{r}_{m})_+=0$. By \eqref{BS2},  $\|I(\cdot,t)\|_{L^1(\Omega)}\to 0$ as $t\to\infty$. This together with the fact that $\int_{\Omega}Sdx=N-\int_{\Omega}Idx$ for all $t>0$, yields $\int_{\Omega}S(x,t)dx\to N$ as $t\to\infty$. It then follows from \eqref{BS1}  that $\|S(\cdot,t)-{N}/{|\Omega|}\|_{L^\infty(\Omega)}\to 0$ as $t\to\infty$.

{\rm (ii)} Suppose that $H^+\cap \{I_0>0\}\ne \emptyset$.  Then we have $N/|\Omega|>\tilde{r}_m$. Since $\int_{\Omega}Sdx=N-\int_{\Omega}Idx$, we conclude from \eqref{BS2} that 
$$
\liminf_{t\to\infty}\int_{\Omega}S(x,t)dx\ge|\Omega|\tilde{r}_{\min},
$$which in view of \eqref{BS1} implies that 
\begin{equation}\label{BS6}
    \liminf_{t\to\infty}\min_{x\in\overline{\Omega}}S(x,t)\ge \tilde{r}_m.
\end{equation}
Now, we claim that 
\begin{equation}\label{BS7}
    \liminf_{t\to\infty}\min_{x\in\overline{\Omega}}S(x,t)=\tilde{r}_m.
\end{equation}
We proceed by contradiction. Suppose to the contrary that \eqref{BS7} is false. Thanks to \eqref{BS6}, there exist $0<\tilde{\varepsilon}\ll 1$ and $\tilde{t}_0>0$ such that 
$$
S(x,t)\ge \tilde{r}_{m}+\tilde{\varepsilon},\quad \forall\ t\ge \tilde{t}_0.
$$
Since $r$ is continuous on $\bar{\Omega}$ and $\tilde{r}_m=\inf_{x\in\{I_0>0\}}r(x)$, there is an open set $\mathcal{O}\subset\{I_0>0\}$ such that $r(x)<\tilde{r}_{m}+\tilde{\varepsilon}/{2}$ for all $x\in\mathcal{O}$. Hence, we have
$$
\partial_{t}I(x,t)=\beta (S-r)I\ge \frac{\tilde{\varepsilon}}{2}\beta_mI(x,t),\quad t>\tilde{t}_0,\ x\in\mathcal{O}.
$$
An integration of this inequality yields 
$$
N\ge \int_{\mathcal{O}}I(x,t)dx\ge e^{\frac{\tilde{\varepsilon}}{2}\beta_m(t-\tilde t_0)}\int_{\mathcal{O}}I(x,\tilde{t}_0)dx,\quad t>\tilde{t}_0.
$$
This is clearly impossible since $ \int_{\mathcal{O}}I(x,\tilde{t}_0)dx>0$. Therefore, \eqref{BS7} must hold.

By \eqref{BS7}, there is a sequence $\{t_k\}_{k\ge 1}$ converging to infinity such that $\min_{x\in\overline{\Omega}}S(x,t_k)\to \tilde{r}_m$ as $k\to\infty$. {  Hence, since 
\begin{align*}
\|\tilde{r}_m-S(\cdot,t_k)\|_{L^\infty(\Omega)}\le& |\tilde{r}_m-\overline{S(\cdot,t_k)}|+ \|S(\cdot,t_k)-\overline{S(\cdot,t_k)}\|_{L^\infty(\Omega)} \cr
\le & |\tilde{r}_m-\min_{x\in\overline{\Omega}}S(x,t_k)|+|\min_{x\in\overline{\Omega}}S(x,t_k)-\overline{S(\cdot,t_k)}|+\|S(\cdot,t_k)-\overline{S(\cdot,t_k)}\|_{L^\infty(\Omega)} \cr 
\le &  |\tilde{r}_m-\min_{x\in\overline{\Omega}}S(x,t_k)|+2\|S(\cdot,t_k)-\overline{S(\cdot,t_k)}\|_{L^{\infty}(\Omega)},\quad \forall\ k\ge 1,
\end{align*}
we conclude from \eqref{BS1} that 
} 
$\|S(\cdot,t_k)-\tilde{r}_m\|_{L^\infty(\Omega)}\to 0$ as $k\to\infty$. This in turn implies that  $\int_{\Omega}I(x,t_k)dx\to N-|\Omega|\tilde{r}_m$ as $k\to\infty$. However, by Lemma \ref{lemma_c}, we know that $\int_{\{I_0>0\}}(S(x,t_k)-r)^2I(x,t_k)dx=\int_{\Omega}(S(x,t_k)-r)^2I(x,t_k)dx\to 0$ as $k\to\infty$. Therefore, possibly after passing to a subsequence, $I(\cdot,t_k)\to 0 $ as $k\to\infty$ almost everywhere on $\{ x\in\Omega :\ r(x)\ne \tilde{r}_m\}$.
Finally since $\{I(\cdot, t_k)\}$ is bounded in $L^1(\Omega)$ and by Riesz representation theorem, passing to a subsequence if necessary, $I(\cdot, t_k)\to (N-|\Omega|\tilde{r}_m)\mu$ weakly as $k\to\infty$ for some probability Radon measure $\mu$.   Since
$$
 \int_{\{I_0>0\}} \beta (S(x, t_k)-r)^2I(x, t_k) dx\to (N-|\Omega|\tilde{r}_m)\int_{\{I_0>0\}} \beta (r_m-r)^2d\mu=0\ \  as \ {k\to\infty},
 $$
 $\mu$ is supported in $\overline{\{I_0>0\}}\cap \mathcal{M}$. In particular if  $\mathcal{M}=\{x_1,\cdots,x_L\}\subset\{I_0>0\}$, then $\mu=\sum_{i=1}^Lc_i\delta_{x_i}$ for some $0\le c_i\le1$ with $\sum_{i=1}^Lc_i=1$.
\end{proof}

\begin{remark}
We conjecture that Theorem \ref{theorem_S} holds for any { $n\ge 1$} and $\|S(\cdot,t)-\tilde{r}_m\|_{L^\infty(\Omega)}\to 0$ as $t\to\infty$ in {\rm (ii)}.
\end{remark}

\section{Model with standard incidence mechanism}
\subsection{Limiting the movement of susceptible people}
First, we consider the impact of limiting the movement of susceptible people on \eqref{model} with standard incidence mechanism by setting $d_S=0$, i.e.,
\begin{equation}\label{model-incidence0}
\begin{cases}
\displaystyle\partial_t S=-\beta(x) \frac{S I}{S+I}+\gamma(x)  I, &x\in\bar\Omega, t>0,\\
\displaystyle\partial_t I=d_I\Delta I+\beta(x) \frac{S I}{S+I}-\gamma(x)  I,&x\in\Omega, t>0,\\
\partial_\nu I=0, &x\in\partial\Omega,t>0,\\
S(x, 0)=S_0(x), \ I(x, 0)=I_0(x), &x\in\bar\Omega.
\end{cases}
\end{equation}

\begin{proposition}\label{prop-incidence}
Suppose that {\rm (A1)-(A2)} holds and $d_I>0$. Then \eqref{model-incidence0} has a unique nonnegative global solution $(S, I)$, where 
$$S\in C^1([0, \infty), C(\overline{\Omega}))\quad \text{and}\quad I\in C([0, \infty), C(\overline{\Omega}))\cap C^1((0, \infty), {\rm Dom}_{\infty}(\Delta)).$$ 
Moreover, there exists $M>0$ depending on (the $L^\infty$ norm of) initial data such that 
\begin{equation}\label{bound-4}
  \|I(\cdot, t)\|_{L^\infty(\Omega)}\le M, \ \forall  t\ge 0.
\end{equation}
\end{proposition}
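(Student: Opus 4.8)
The plan is to follow the scheme of Proposition \ref{prop} almost verbatim, the only genuinely new ingredient being the treatment of the standard incidence nonlinearity $g(S,I):=SI/(S+I)$. Fixing a large constant $a>0$ and rewriting \eqref{model-incidence0} in its mild (integral) form
\begin{equation*}
\begin{cases}
S(\cdot,t)=e^{-at}S_0+\int_0^t e^{-a(t-s)}\big(aS-\beta g(S,I)+\gamma I\big)(\cdot,s)\,ds,\\
I(\cdot,t)=e^{t(d_I\Delta-a)}I_0+\int_0^t e^{(t-s)(d_I\Delta-a)}\big((a-\gamma)I+\beta g(S,I)\big)(\cdot,s)\,ds,
\end{cases}
\end{equation*}
I would run the Banach fixed point argument in a ball of $[C([0,T],C(\overline{\Omega}))]^2$ for small $T>0$, exactly as in Proposition \ref{prop}, and then bootstrap regularity: the first (an ODE in $t$ for each $x$) equation gives $S\in C^1([0,T],C(\overline{\Omega}))$, while \cite[Theorem 4.3.1 and Corollary 4.3.3]{pazy2012semigroups} give $I\in C([0,T],C(\overline{\Omega}))\cap C^1((0,T],{\rm Dom}_{\infty}(\Delta))$.

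The point requiring care, and the only place where \eqref{model-incidence0} differs from the mass-action model, is that $g(S,I)=SI/(S+I)$ is not smooth at the origin, so its Lipschitz continuity is not obvious. I would resolve this by the elementary computation $\partial_S g=I^2/(S+I)^2$ and $\partial_I g=S^2/(S+I)^2$ on the open first quadrant, whence $0\le\partial_S g,\partial_I g\le 1$; consequently $g$, extended by $g(0,0)=0$, is globally Lipschitz on the closed cone $\{S\ge 0,\ I\ge 0\}$ with $|g(S_1,I_1)-g(S_2,I_2)|\le|S_1-S_2|+|I_1-I_2|$. Composing with the (Lipschitz) positive-part map, I extend $g$ to a globally Lipschitz $\tilde g$ on all of $\R^2$, so that the contraction estimate holds on an arbitrary ball; nonnegativity of the resulting solution is then recovered from the quasipositivity of the reaction terms (since $g(S,0)=g(0,I)=0$), after which $\tilde g$ agrees with $g$ along the solution and \eqref{model-incidence0} is solved.

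For the a priori bound \eqref{bound-4} I would argue as in Proposition \ref{prop}, but the estimate is in fact cleaner here. Integrating the two equations and adding gives $\frac{d}{dt}\int_\Omega(S+I)\,dx=0$, so $\int_\Omega I\,dx\le N$ for all $t\ge0$. Writing the $I$-equation as $\partial_t I=d_I\Delta I+a(x,t)I$ with $a(x,t)=\beta(x)\frac{S}{S+I}-\gamma(x)$, the crucial observation is that $0\le\frac{S}{S+I}\le 1$ forces $\|a(\cdot,t)\|_{L^\infty(\Omega)}\le\beta_M+\gamma_M$ uniformly in $t$, with no need to bound $S$ first (this is exactly the simplification over the mass-action case, where one first had to control $\|S\|_{L^\infty}$). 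The uniform $L^1$ bound together with the bounded coefficient then upgrades to the uniform $L^\infty$ bound via \cite[Theorem 3.1]{alikakos1979application} (or via Lemma \ref{lemma_harnack} combined with $\inf_\Omega I(\cdot,t)\le N/|\Omega|$), yielding \eqref{bound-4} with $M$ depending only on $\|I_0\|_{L^\infty(\Omega)}$.

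Finally, global existence follows by ruling out finite-time blow-up: once $\|I(\cdot,t)\|_{L^\infty(\Omega)}\le M$, the $S$-equation gives $|\partial_t S|\le(\beta_M+\gamma_M)M$ (using $g(S,I)\le I$), so $0\le S(x,t)\le S_0(x)+(\beta_M+\gamma_M)Mt$ stays bounded on every compact time interval; hence both components remain bounded as $t\uparrow t_{\max}$, forcing $t_{\max}=\infty$. I expect the Lipschitz subtlety of $g$ at the origin to be the only real obstacle; once the gradient bound above is in hand, the remainder is a routine transcription of the proof of Proposition \ref{prop}.
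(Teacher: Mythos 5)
Your proposal is correct and follows essentially the same route as the paper's proof: a Banach fixed point argument on the mild formulation with Pazy's regularity results, the uniform bound on $I$ via the conservation $\int_\Omega I\,dx\le N$ together with the linear growth $\beta SI/(S+I)-\gamma I\le CI$ and \cite[Theorem 3.1]{alikakos1979application}, and global extension from the at-most-linear-in-$t$ growth of $S$. The only difference is that you spell out the Lipschitz continuity of $SI/(S+I)$ on the closed first quadrant (via the gradient bounds $0\le \partial_S g,\partial_I g\le 1$) and the quasipositivity argument for nonnegativity, details the paper simply asserts.
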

\begin{proof}
If we define $SI/(S+I)=0$ when $(S, I)=(0, 0)$, then $SI/(S+I)$ is Lipschitz in the first quadrant. So the existence and uniqueness of nonnegative local solution $(S, I)$ can be proved using  the Banach fixed point theorem, where  $S\in C^1([0, T), C(\overline{\Omega}))$ and $I\in C([0, T), C(\overline{\Omega}))\cap C^1((0, T), {\rm Dom}_\infty(\Delta))$ for some $T>0$.  

Since the right hand side of the second equation of \eqref{model-incidence0} has linear growth rate (i.e. $\beta SI/(S+I)-
\gamma I\le C I$ for some constant $C>0$) and $\int_\Omega I dx\le N$ for all $t>0$, by \cite[Theorem 3.1]{alikakos1979application},   there exists $M>0$ such that $\|I(\cdot, t)\|_{L^\infty(\Omega)}\le M$ for all $t>0$.  By the first equation of \eqref{model-incidence0}, we  have $\partial_t S\le \gamma I$, which implies 
$$
\|S(\cdot, t)\|_{L^\infty(\Omega)}\le \|S_0\|_{L^\infty(\Omega)}+M\gamma_Mt, \ \ t\ge 0.
$$
Hence, we can extend the solution globally.
\end{proof}

We define $H^+$, $H^0$, and $H^-$ as the  high-risk, moderate-risk, and low-risk sites, respectively, where $H^+=\left\{x\in\bar\Omega: \ \beta(x)-\gamma(x)>0 \right\}$, $H^0=\left\{x\in\bar\Omega: \ \beta(x)-\gamma(x)=0 \right\}$ and $H^-=\left\{x\in\bar\Omega: \ \beta(x)-\gamma(x)<0 \right\}$. The following result has appeared in the literature.

\begin{theorem} \cite[Theorem 2.5, Lemma 5.6]{LouSalako2021}\label{theorem_SL}
Suppose that {\rm (A1)-(A2)} holds and $d_I>0$. Let $(S, I)$ be the solution of \eqref{model-incidence0}. Then the following conclusions hold:
\begin{enumerate}
\item[\rm (i)] If $H^-$ is nonempty, then $\lim_{t\to \infty}\|S(\cdot, t)-S^*\|_{L^\infty(\Omega)} =0$ and $\lim_{t\to \infty}\|I(\cdot, t)\|_{L^\infty(\Omega)} =0$, where $S^*\in C(\bar\Omega)$ satisfies $S^*>0$  on $H^-\cup H^0$.

\item[\rm (ii)]  If $\beta(x)>\gamma(x)$ for all $x\in\bar\Omega$, then  $\lim_{t\to\infty}\|S(t,\cdot)-\gamma I^*/(\beta-\gamma)\|_{L^\infty(\Omega)}= 0$ and $\lim_{t\to\infty} \|I(t,\cdot)-I^*\|_{L^\infty(\Omega)}= 0$, where $I^*$ is a constant given by 
\begin{equation}\label{I-star}
I^*:=\frac{N}{\int_{\Omega}\frac{\beta}{\beta-\gamma}dx}.
\end{equation}
\end{enumerate}
\end{theorem}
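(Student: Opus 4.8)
The plan is to split the analysis according to the two regimes and, rather than fight the degeneracy $d_S=0$, to exploit the fact that the $S$--equation is a pointwise ODE. In \textbf{(i)} the disease dies out because a low-risk region acts as a permanent sink for susceptibles; this is driven by the $S$--dynamics and \emph{not} by any spectral smallness, so a comparison with the linearization is insufficient. In \textbf{(ii)} the hypothesis $\beta>\gamma$ removes that sink, the endemic state is attracting, and I would prove convergence via a Lyapunov function.

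For \textbf{(i)}, the key observation is that at every low- or moderate-risk site the susceptible density can only increase: since $\partial_tS=I(\gamma-\beta\frac{S}{S+I})$ and $\frac{S}{S+I}<1$, one has $\partial_tS\ge(\gamma-\beta)I\ge 0$ on $H^-\cup H^0$ whenever $I>0$. First I would fix a compact set $E\subset H^-$ of positive measure with $\gamma-\beta\ge c>0$ on $E$ (possible since $\beta,\gamma$ are continuous and $H^-$ is open and nonempty). Because $t\mapsto S(x,t)$ is nondecreasing on $E$ while the conservation law $\int_\Omega(S+I)\,dx=N$ forbids $\int_E S$ from blowing up, $S(x,\cdot)$ converges to a finite limit for a.e.\ $x\in E$; integrating the $S$--equation then gives $\int_E\int_0^\infty I(x,t)\,dt\,dx\le c^{-1}\int_E(S^*-S_0)\,dx<\infty$. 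The Harnack inequality of Lemma \ref{lemma_harnack} (applicable since $\beta\frac{S}{S+I}-\gamma$ is bounded) yields $\|I(\cdot,t)\|_{L^\infty(\Omega)}\le C|E|^{-1}\int_E I(x,t)\,dx$ for $t\ge 1$, so $\int_1^\infty\|I(\cdot,t)\|_{L^\infty(\Omega)}\,dt<\infty$; since parabolic regularity makes $t\mapsto\|I(\cdot,t)\|_{L^\infty(\Omega)}$ Hölder continuous, Lemma \ref{lemma_inf0} forces $\|I(\cdot,t)\|_{L^\infty(\Omega)}\to 0$. Finally, $|\partial_tS|\le(\gamma_M+\beta_M)I$ together with $\int_0^\infty\|I(\cdot,t)\|_{L^\infty(\Omega)}\,dt<\infty$ shows $S(\cdot,t)$ converges uniformly to some $S^*\in C(\overline\Omega)$, and the strong maximum principle ($I>0$ for $t>0$) with $\partial_tS\ge(\gamma-\beta)I>0$ on $H^-\cup H^0$ gives $S^*>0$ there.

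For \textbf{(ii)}, with $I^*$ as in \eqref{I-star} and $S^*:=\gamma I^*/(\beta-\gamma)$ the endemic profile, I would use
$$V(t)=\int_\Omega\Big[\big(S-S^*-S^*\ln\tfrac{S}{S^*}\big)+\big(I-I^*-I^*\ln\tfrac{I}{I^*}\big)\Big]\,dx.$$
Using the identity $\int_\Omega(1-I^*/I)\Delta I\,dx=-I^*\int_\Omega|\nabla I|^2/I^2\,dx$ and regrouping the reaction terms through the substitution $(S/S^*,I/I^*)$, a direct computation yields
$$\frac{d}{dt}V=-d_II^*\int_\Omega\frac{|\nabla I|^2}{I^2}\,dx-\frac{1}{I^*}\int_\Omega\frac{(\beta-\gamma)\,(SI^*-S^*I)^2}{S(S+I)}\,dx\le 0,$$
both terms being nonpositive precisely because $\beta>\gamma$. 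Before this is meaningful I must secure uniform-in-time bounds $0<\rho\le S,I\le M$: the upper bound on $I$ is Proposition \ref{prop-incidence}; uniform persistence $I\ge\rho$ follows by testing the $I$--equation against the principal eigenfunction of $d_I\Delta+(\beta-\gamma)$ and using $\sigma(d_I,\beta-\gamma)>0$, which holds for every $d_I$ by Lemma \ref{eig-value-lemma} since $\beta-\gamma>0$; and the two-sided bounds on $S$ come from the sign of its drift (negative once $S$ is large, positive once $S$ is small, given $I\in[\rho,M]$). Then $\int_0^\infty(-\dot V)\,dt=V(0)-\lim_{t\to\infty}V(t)<\infty$.

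The main obstacle in \textbf{(ii)} is that the degeneracy $d_S=0$ deprives the $S$--orbit of compactness, so LaSalle's invariance principle is not directly available. I would circumvent this exactly as in the mass-action analysis: establish Hölder continuity in $t$ of the two dissipation integrals (as in Lemmas \ref{New-Lem1}--\ref{lemma_c}) and invoke Lemma \ref{lemma_inf0} to conclude that both vanish as $t\to\infty$. The gradient term forces $I(\cdot,t)-\overline{I(\cdot,t)}\to 0$, so $I$ homogenizes in space, while the second term forces $SI^*-S^*I\to 0$, i.e.\ $S\to S^*I/I^*$; feeding these into $\int_\Omega(S+I)\,dx=N$ pins the common spatial limit of $I$ to $I^*$, whence $S\to S^*$ and the claimed convergence follows. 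The analogous difficulty in \textbf{(i)}, namely driving $I$ to zero with no spectral assumption, is precisely what makes the monotonicity-plus-mass-conservation argument indispensable there.
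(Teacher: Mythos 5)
First, note that the paper does not prove Theorem \ref{theorem_SL} at all: it is imported verbatim from \cite[Theorem 2.5, Lemma 5.6]{LouSalako2021}, so there is no in-paper proof to compare against, and your attempt must be judged on its own terms and against the paper's toolkit. Your part (i) is correct and essentially complete, and it is exactly in the spirit of what the paper does elsewhere (Theorems \ref{theorem_massds} and \ref{theorem_stdDS}(i)): monotonicity of $S$ on a compact $E\subset H^-$ where $\gamma-\beta\ge c>0$, plus mass conservation, gives $\int_E\int_0^\infty I\,dt\,dx<\infty$; the Harnack inequality of Lemma \ref{lemma_harnack} upgrades this to $\int_1^\infty\|I(\cdot,t)\|_{L^\infty(\Omega)}\,dt<\infty$; parabolic regularity plus Lemma \ref{lemma_inf0} then forces $I\to0$, and $|\partial_tS|\le(\beta_M+\gamma_M)I$ makes $S(\cdot,t)$ uniformly Cauchy, with $S^*>0$ on $H^-\cup H^0$ since $I(\cdot,t)>0$ for $t>0$, $\partial_tS\ge(\gamma-\beta)I>0$ on $H^-$ and $\partial_tS=\gamma I^2/(S+I)>0$ on $H^0$. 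In part (ii) I verified your dissipation identity: with $S^*=\gamma I^*/(\beta-\gamma)$ one indeed gets
\begin{align*}
\frac{d}{dt}V=-d_II^*\int_\Omega\frac{|\nabla I|^2}{I^2}\,dx-\frac{1}{I^*}\int_\Omega\frac{(\beta-\gamma)\,(SI^*-S^*I)^2}{S(S+I)}\,dx\le 0,
\end{align*}
and your endgame (both dissipation terms vanish, $I$ homogenizes via Poincar\'e and the compactness of the $I$-orbit, mass conservation pins the constant to $I^*$, and the pointwise $S$-ODE upgrades $S\to S^*$ to $L^\infty$) is sound once two-sided a priori bounds are in hand. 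This logarithmic Volterra-type functional is a genuinely different choice from the quadratic functionals the paper itself uses in the standard-incidence setting (\eqref{ly1st}, and $V=\frac12\int_\Omega(S^2+\kappa I^2)dx$ in Theorem \ref{theorem_stddi}), and it is arguably cleaner here because it measures the full distance to the endemic state rather than yielding only partial information.

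The genuine gap is the uniform persistence step in (ii), which as written is circular. Testing the $I$-equation with the principal eigenfunction $\varphi$ of $d_I\Delta+(\beta-\gamma)$ gives $\frac{d}{dt}\int_\Omega\varphi I\,dx=\sigma(d_I,\beta-\gamma)\int_\Omega\varphi I\,dx-\int_\Omega\beta\frac{I}{S+I}\varphi I\,dx$; the positivity of $\sigma(d_I,\beta-\gamma)$ helps only if the last term can be absorbed, i.e., only if $I/(S+I)$ is small, which requires a pointwise lower bound on $S$ relative to $I$. But your lower bound on $S$ is derived ``given $I\in[\rho,M]$'' --- that is, assuming the very persistence being proved. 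Because $d_S=0$, $S$ does not diffuse: if $S_0$ is tiny on an open set, $S$ recovers there only at the rate the local infection feeds it, so one cannot rule out a priori that $S$ stays small precisely where $I$ is small; and a pointwise comparison toward $S\ge\delta I$ fails because $\partial_t(S-\delta I)$ carries the nonlocal term $-\delta d_I\Delta I$. Untangling this circularity (for instance via a dichotomy between $\int_0^\infty\|I(\cdot,t)\|_{L^\infty(\Omega)}\,dt<\infty$, which is then excluded, and the divergent case in which $S$ is driven toward $\gamma I/(\beta-\gamma)$) is exactly the technical content of \cite[Lemma 5.6]{LouSalako2021}, and your proposal does not supply it. Two smaller repairs: Proposition \ref{prop-incidence} gives only linear-in-$t$ growth of $\|S(\cdot,t)\|_{L^\infty(\Omega)}$, so the uniform upper bound on $S$ must come from the ODE comparison you gesture at ($\partial_tS<0$ once $S>\frac{\gamma}{\beta-\gamma}I$, $I\le M$, and $(\beta-\gamma)_m>0$), which does work; and the logarithms in $V$ require strict positivity, so the functional should be started at $t=1$, using $\partial_tS=\gamma I>0$ where $S=0$ together with the strong maximum principle for $I$.
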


\begin{remark}\label{Remark1}
    The only case not covered by Theorem \ref{theorem_SL} is when $\beta\ge \gamma$ and $H^0$ is not empty, which we will deal with later in this section. In the case $H^-\ne\emptyset$ and $\mathcal{R}_0>1$, it has been shown in   \cite[Theorem 2.5]{LouSalako2021} that $J^*:=\{x\in\bar\Omega: S^*(x)=0\}$ is a subset of $H^+$ such that both $J^*$ and $\Omega\backslash J^*$ have positive measure, where $\mathcal{R}_0$,  defined as 
    \begin{equation}\label{R-0-def}
    \mathcal{R}_0=\sup\left\{\frac{\int_\Omega \beta \phi^2dx}{\int_\Omega (d_I|\triangledown \phi|^2+\gamma \phi^2) dx}:\ \ \phi\in H^1(\Omega)\backslash\{0\}\right\},
    \end{equation}
     is the  basic reproduction number of the  diffusive epidemic model \eqref{model} with standard infection incidence mechanism and  diffusion rates $d_S>0$ and $d_I>0$. 
\end{remark}

\begin{theorem}\label{theorem_stdDS}
Suppose that {\rm (A1)-(A2)} holds and $d_I>0$. Let $(S, I)$ be the solution of \eqref{model-incidence0}. If $\beta(x)\ge\gamma(x)$ for all $x\in\bar\Omega$ and $H^0$ is nontrivial, then the following conclusions hold:
\begin{enumerate}

\item[\rm (i)] If $H^0$ has positive measure, then  there exists $M>0$ depending on (the $L^\infty$ norm of) initial data such that 
\begin{equation}\label{bound-incidence}
  \|S(\cdot, t)\|_{L^\infty(\Omega)}\le M, \ \ \text{for all }  t\ge 0.
\end{equation}
Moreover, there is {  $S^*\in L^{\infty}(\Omega)$ satisfying } $S^*_{|H^0}\in C(H^0)$ and $S^*_{|H^0}>0$ on  $H^0$ such that $\lim_{t\to\infty}\|S(\cdot, t)-S^*\|_{L^{\infty}(H^0)}=0$, { $\lim_{t\to\infty}\|S(\cdot,t)-S^*\|_{L^1(H^+)}=0$, }  and $\lim_{t\to\infty}\|I(\cdot, t)\|_{L^\infty(\Omega)}=0$. {  In addition, if $H^+\ne\emptyset$, then $\text{meas}(\{x\in H^+\, | \, S^*(x)=0\})>0$. }

\item[\rm (ii)]   If $H^0$ has zero measure
and $\int_\Omega 1/(\beta-\gamma)dx=\infty$, then there exists $\{t_k\}$ converging to infinity such that  
 $I(\cdot, t_k)\to 0$ in $C(\bar\Omega)$ {  and $\int_{\Omega}S(\cdot,t_k)dx\to N$} as $k\to\infty$.
\end{enumerate}
\end{theorem}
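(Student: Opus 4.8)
The plan is to handle the two cases by genuinely different mechanisms, but in both to combine the spatial homogenization of $I$ furnished by the Harnack inequality (Lemma \ref{lemma_harnack}) with the one–sided sign structure of the (non‑diffusing) $S$–equation. Rewriting the first equation of \eqref{model-incidence0} as
\[
\partial_t S=\frac{I\big(\gamma I-(\beta-\gamma)S\big)}{S+I},
\]
I observe that on $H^0$ (where $\beta=\gamma$) this is $\partial_t S=\gamma I^2/(S+I)\ge 0$, so $S(\cdot,t)$ is nondecreasing on $H^0$; on $H^+$ the right–hand side is negative once $S>\gamma I/(\beta-\gamma)$, so $S$ is driven downward toward the quasi–equilibrium $\gamma I/(\beta-\gamma)$. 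The depletion of susceptibles on the high‑risk region $H^+$ is what ultimately removes the fuel for the infection and forces $I\to0$, which is consistent with the claim that $S^*$ vanishes on part of $H^+$.

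For part (i) I would first extract dissipation from the monotonicity on $H^0$. Since $\int_{H^0}S\,dx\le\int_\Omega S\,dx\le N$ and $\tfrac{d}{dt}\int_{H^0}S\,dx=\int_{H^0}\gamma I^2/(S+I)\,dx\ge 0$, this integral increases to a finite limit, whence $\int_0^\infty\!\int_{H^0}\gamma I^2/(S+I)\,dx\,dt<\infty$. Assuming the $L^\infty$–bound on $S$ (the main obstacle, discussed below), the denominator $S+I$ is bounded on $H^0$, and after checking Hölder continuity of $t\mapsto\int_{H^0}\gamma I^2/(S+I)\,dx$ via the parabolic regularity in Proposition \ref{prop-incidence}, Lemma \ref{lemma_inf0} forces $\|I(\cdot,t)\|_{L^2(H^0)}\to0$; since $|H^0|>0$, the Harnack bound $\sup_\Omega I\le C\inf_\Omega I$ upgrades this to $\|I(\cdot,t)\|_{L^\infty(\Omega)}\to0$. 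With $I\to0$ in hand, $S^*=\lim_t S$ exists on $H^0$ by monotonicity, is positive and continuous there, and the convergence is uniform on $H^0$ by Dini's theorem; on $H^+$ the estimate $|\partial_t S|\le\beta_M\|I\|_\infty$ together with $\int_0^\infty\|I\|_\infty\,dt<\infty$ gives $L^1(H^+)$–convergence of $S$ to a (generally discontinuous) limit $S^*$. Finally, to prove $\mathrm{meas}\{x\in H^+:S^*(x)=0\}>0$ when $H^+\ne\emptyset$, I argue by contradiction: if $S^*>0$ a.e. on $H^+$, then $\beta S/(S+I)-\gamma\to\beta-\gamma$ a.e., which is nonnegative and positive on the positive–measure set $H^+$, so $\sigma(d_I,\beta-\gamma)>0$ by Lemma \ref{eig-value-lemma}; testing the $I$–equation against the associated positive eigenfunction $\varphi$ gives $\tfrac{d}{dt}\int\varphi I=\sigma\int\varphi I-\int\beta\varphi I^2/(S+I)$, and once $I$ is uniformly small the first term dominates, producing exponential growth of $\int\varphi I$ and contradicting $I\to0$.

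For part (ii), where $|H^0|=0$ and $\int_\Omega 1/(\beta-\gamma)\,dx=\infty$, I would show that $\inf_\Omega I(\cdot,t)$ cannot remain bounded away from $0$. Here $H^+$ coincides with $\Omega$ up to a null set, and the sign structure shows $\partial_t S>0$ wherever $S<\gamma I/(\beta-\gamma)$. If $\inf_\Omega I\ge c>0$ persisted on a long time interval, then by Harnack $I\asymp c$ on $\Omega$, so $S$ would rise toward $\gamma I/(\beta-\gamma)\ge \gamma_m c/(\beta-\gamma)$, forcing $\int_\Omega S\,dx\ge \gamma_m c\int_\Omega 1/(\beta-\gamma)\,dx=\infty$ and contradicting $\int_\Omega S\,dx\le N$. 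Hence there is a sequence $t_k\to\infty$ with $\inf_\Omega I(\cdot,t_k)\to0$, Harnack yields $\|I(\cdot,t_k)\|_{C(\bar\Omega)}\to0$, and mass conservation $\int_\Omega S\,dx=N-\int_\Omega I\,dx$ then gives $\int_\Omega S(\cdot,t_k)\,dx\to N$.

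The hard part will be the $L^\infty$–boundedness of $S$ in part (i), and, tied to it, the passage from subsequential to full decay of $\|I(\cdot,t)\|_\infty$. Because $S$ does not diffuse there is no smoothing, and its growth on $H^0$ is governed by $\int_0^t I^2/(S+I)$, whose finiteness in the spatial average does not immediately control $S$ pointwise. I expect to close this by a bootstrap/maximal–time argument: provisionally bounding $S\le B$ on $H^0$ turns the dissipation above into $\int_0^\infty\|I\|_\infty^2\,dt<\infty$ (using Harnack to compare $I$ at every point with $\|I\|_\infty$), and then the quadratic structure $\partial_t S=\gamma I^2/(S+I)\le \gamma I^2/S_0$ on $\{S_0>0\}$ yields a bound on $S$ independent of $B$, allowing the provisional bound to be propagated for all time. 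Establishing this self‑improving estimate, and with it the integrable decay of $I$, is the delicate step on which the remainder of part (i) rests.
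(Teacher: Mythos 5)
Your overall architecture matches the paper's (dissipation from the monotonicity of $S$ on $H^0$, Harnack homogenization of $I$, an eigenfunction contradiction for the vanishing-set claim, and the mass-versus-nonintegrability contradiction for (ii) — your version of (ii) is in fact correct and even leaner than the paper's, which routes through $\|\nabla I\|_{L^2}\to 0$ and an $\omega$-limit argument, whereas your Harnack dichotomy on $\inf_\Omega I$ suffices). But part (i) has a genuine gap at exactly the point you flag: your bootstrap for the $L^\infty$ bound on $S$ does not close. Assuming $S\le B$ on $H^0$, the dissipation $\int_0^\infty\int_{H^0}\gamma I^2/(S+I)\,dx\,dt\le N$ converts, via Harnack, into $\int_1^\infty\|I(\cdot,t)\|_{L^\infty(\Omega)}^2\,dt\le C^2(B+M)N/(\gamma_m|H^0|)$ — a bound growing \emph{linearly} in $B$ — and feeding this back through $\partial_t S\le\gamma I^2/S$ returns $S\le S(\cdot,1)+c\,(B+M)$ with a constant $c$ that has no reason to be less than $1$; the estimate is not self-improving. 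The paper's proof supplies the missing non-circular idea: fix $x_0\in H^0$ and note that, by Harnack, the ODE forcings $\gamma(x)I^2(x,t)/(\,\cdot\,+I(x,t))$ at different $x$ are comparable, so $KS(x_0,t)$ is an upper solution of the $S$-equation at every $x$ for a fixed large $K$; this gives $S(x,t)\le KS(x_0,t)$ on $\bar\Omega$ and $K^{-1}S(x_0,t)\le S(x,t)$ on $H^0$, and then the mass constraint $N\ge\int_{H^0}S\ge|H^0|S(x_0,t)/K$ pins down $S(x_0,t)$, yielding $S\le K^2N/|H^0|$. This is where $|H^0|>0$ enters, and without it the rest of your part (i) has no foundation.

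There is a second, independent error in part (i): you invoke $\int_0^\infty\|I(\cdot,t)\|_{L^\infty(\Omega)}\,dt<\infty$ to get $L^1(H^+)$ convergence of $S$, but the dissipation only yields \emph{square}-integrability, and on $H^0$ the effective dynamics $\partial_t I\approx-\gamma I^2/S^*$ suggests the borderline decay $\|I\|_\infty\sim 1/t$, for which $\int\|I\|_\infty\,dt=\infty$; so this step fails as written. The paper instead introduces the Lyapunov functional $V=\tfrac12\int_\Omega(\kappa S^2+I^2)\,dx$ with $\kappa=(\beta-\gamma)/\beta$, giving $\int_0^\infty\int_\Omega\gamma(\kappa S-I)^2I/(S+I)\,dx\,dt<\infty$, and then uses the pointwise identity $\tfrac12(\kappa S^2)_t=-\gamma(I-\kappa S)^2I/(S+I)+\gamma(I-\kappa S)I^2/(S+I)$, whose two pieces are time-integrable in $L^1(H^+)$ (the second by square-integrability of $\|I\|_\infty$), so $\kappa S^2$ converges a.e.\ on $H^+$ and dominated convergence gives the $L^1$ limit. (A sign-structure alternative is available — once $\gamma(x)\|I\|_\infty<(\beta(x)-\gamma(x))\delta$ the set $\{S\le\delta\}$ is forward invariant at $x\in H^+$ and $S$ decreases above it, forcing pointwise convergence — but some such replacement is mandatory.) Two smaller leaps: in your eigenfunction contradiction, "once $I$ is uniformly small the first term dominates" is not immediate because $S$ has no uniform positive lower bound, so $\beta I/(S+I)$ need not be small pointwise; you need Harnack plus dominated convergence to show $\int_\Omega I/(S+I)\,dx\to 0$ (the paper uses the Ces\`aro average of this quantity). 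And Dini's theorem on $H^0$ presupposes continuity of the limit $S^*_{|H^0}$, which is not known a priori; the paper instead shows $\int_1^\infty\|S_t\|_{L^\infty(H^0)}\,dt<\infty$, which gives uniform convergence and continuity simultaneously.
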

\begin{proof} 

{\rm (i)} By Lemma \ref{lemma_harnack}, there exists $C>1$ such that  
\begin{equation}\label{minmax}
\max_{x\in\bar\Omega} I(x, t)\le C \min_{x\in\bar\Omega} I(x, t), \ \ x\in\bar\Omega, t\ge 1.
\end{equation}
Fix $x_0\in H^0$. Let $x\in\bar\Omega$. Then $\partial_t S=\frac{(-\beta+\gamma)S+\gamma I}{S+I}I\le \gamma I^2/(S+I)$. We consider the following problem:
\begin{equation}\label{model-comp}
\begin{cases}
\displaystyle \bar S'=\gamma(x) \frac{I^2(x, t)}{\bar S+I(x, t)},&t>1,\\
\bar S(1)=S(x, 1). &
\end{cases}
\end{equation}
Then we have $S(x, t)\le \bar S(t)$ for all $t\ge 1$. We claim that $KS(x_0, t)$ is an upper solution of \eqref{model-comp} if $K>0$ is large enough.  To see it, it suffices to check
$$
K\partial_tS(x_0, t)=K\gamma(x_0) \frac{I^2(x_0, t)}{S(x_0, t)+I(x_0, t)}\ge  \gamma(x) \frac{I^2(x, t)}{KS(x_0, t)+I(x, t)}.
$$
Noticing \eqref{minmax}, we only need to check 
$$
K\gamma(x_0) \frac{I^2(x, t)/C^2}{S(x_0, t)+{\frac{I(x,t)}{C}}}\ge  \gamma(x) \frac{I^2(x, t)}{KS(x_0, t)+I(x, t)},
$$
which is equivalent to $(K^2\gamma(x_0)-C^2\gamma(x))S(x_0, t)+(K\gamma(x_0)-{C}\gamma(x))I(x, t)\ge 0$. So we can choose $K$ large independent of $x\in\bar\Omega$ such that the inequality holds. Hence, we have $KS(x_0, t)\ge \bar S(t)\ge S(x, t)$ for all $t\ge 1$. Moreover, interchanging the role of $x_0$ and $x$, we have 
\begin{equation}\label{xx0}
S(x_0, t)/K\le S(x, t)\le KS(x_0, t), \ \ \ \forall x\in H^0, \ t\ge 1. 
\end{equation}
By \eqref{xx0},  $N\ge \int_\Omega S(x, t)dx\ge \int_{H^0} S(x, t)dx\ge \int_{H^0} S(x_0, t)/Kdx=|H^0| S(x_0, t)/K$ for all $t\ge 1$. Therefore, we have $S(x_0, t)\le KN/|H^0|$ and $S(x, t)\le K^2N/|H^0|$ for all $x\in\bar\Omega$ and $t\ge 1$.

The convergence of $(S, I)$ can be proved similar to  \cite[Lemma 5.6]{LouSalako2021}, and we include it for completeness. By Proposition \ref{prop-incidence}, we have $0\le S(x, t), I(x, t)\le M$ for all $x\in\bar\Omega$ and $t\ge 0$. It follows from the equation of $S$  that 
$$
\partial_t S=\frac{\gamma I^2}{S+I}\ge  \frac{\gamma_m}{2M} \min_{ y\in H^0} I^2(y, t), \ \ \forall\ x\in H^0, {  t>0}.
$$
Integrating the above inequality over $H^0\times (0, \infty)$ and noticing that $H^0$ has positive measure, we see that  $\int_0^\infty \min_{x\in H^0} I^2(x, t) dx<\infty$. By \eqref{minmax},
it holds that 
\begin{equation}\label{ibb}
\int_{0}^{\infty}\|I(\cdot,t)\|^2_{L^\infty(\Omega)}dt<\infty.
\end{equation}
By \eqref{bound-4} and the $L^p$ estimate, $I$ is  H\"older continuous on $\bar\Omega\times [1, \infty)$. Therefore, Lemma \ref{lemma_inf0} and \eqref{ibb} imply that $I(x, t)\to 0$ uniformly on $\bar\Omega$ as $t\to\infty$.  

Since $\partial_t S=\gamma I^2/(S+I)$,  $S(x,t)$ is strictly increasing in $t\in(0,\infty)$ for every $x\in H^{0}$ and  
\begin{align}\label{RS5}
\int_{1}^{\infty}\|S_t(\cdot,t)\|_{L^{\infty}(H^0)}dt\le &\gamma_M\int_{1}^{\infty}\frac{\|I(\cdot,t)\|_{L^\infty(\Omega)}^2}{\min_{x\in H^0}S(x,t)}dt\cr 
\le & \frac{\gamma_M}{\min_{x\in H^0}S(x,1)}\int_{1}^{\infty}\|I(\cdot,t)\|^2_{L^\infty(\Omega)}dt<\infty.
\end{align}
Whence, $S(\cdot,t)\to S^*_{|H^0}:=S(\cdot,0)+\int_0^{\infty}S_t(\cdot,t)dt\in C(H^0)$ uniformly on $H^0$ as $t\to \infty$.

{  Next, we discuss the convergence of $S(x,t)$ as $t\to\infty$ for $x\in H^+$. To this end, let $\kappa:=(\beta-\gamma)/\beta$ and define $$
V(S, I)=\frac{1}{2}\int_\Omega\left(\kappa S^2+I^2 \right)dx. 
$$
It is easy to check that 
\begin{equation}\label{ly1st}
\frac{d}{dt}V(S, I)=-d_I\int_\Omega |\triangledown I|^2dx-\int_\Omega \gamma \frac{(\kappa S-I)^2}{S+I}Idx.
\end{equation}
Integrating \eqref{ly1st} over $(0,t)$ and taking $t\to\infty$, we obtain 
\begin{equation}\label{Revise-eq1}
    \int_0^{\infty}\int_{\Omega}\gamma\frac{(\kappa S-I)^2}{S+I}Idxdt<\infty.
\end{equation}
On the other hand, we have 
\begin{align*}
    \frac{1}{2}(\kappa S^2)_t=\gamma\frac{(I-\kappa S)\kappa S I}{S+I}=\gamma\frac{(I-\kappa S)(\kappa S-I+I)I}{S+I}=-\gamma\frac{(I-\kappa S)^2I}{S+I}+\gamma\frac{(I-\kappa S)}{S+I}I^2.
\end{align*}
Hence, by \eqref{ibb} and \eqref{Revise-eq1}, we have that  
\begin{align*}
    \int_{1}^{\infty}\Big\|\frac{(\kappa S^2)_t}{2}\Big\|_{L^{1}(H^+)}dt\le & \int_1^{\infty}\int_{\Omega}\gamma\frac{(I-\kappa S)^2I}{S+I}dxdt+\int_{1}^{\infty}\int_{\Omega}\gamma\frac{|I-\kappa S|}{S+I}I^2dxdt\cr
    \le & \int_1^{\infty}\int_{\Omega}\gamma\frac{(I-\kappa S)^2I}{S+I}dxdt+\gamma_M(1+\|\kappa\|_{\infty})|\Omega|\int_{1}^{\infty}\|I\|_{L^{\infty}(\Omega)}^2dt\cr 
    <&\infty.
\end{align*}
Therefore, there is a  measurable subset $\mathcal{N}\subset H^+$ with ${\text{meas}(\mathcal{N})}=0$  such that 
$$
\int_{1}^{\infty}\Big|\frac{(\kappa S^2)_t}{2}\Big|dt<\infty, \quad \forall\ x\in H^{+}\setminus\mathcal{N}.
$$
As a result, we have that $ \kappa(x) S^2(x,t)\to \kappa(x)(S_0^2(x)+\int_0^{\infty}(S^2)_tdt)$ as $t\to\infty$ for $x\in H^{+}\setminus\mathcal{N}$. So, $S(x,t)\to S^*_{H^+}(x):=\sqrt{S^2_0(x)+\int_0^{\infty}(S^2)_tdt}$ as $t\to\infty$ for $x\in H^+\setminus \mathcal{N}$. Since $\|S(\cdot,t)\|_{L^{\infty}(\Omega)}\le M$ for all $t\ge 1$, then $S^*_{|H^+}\in L^{\infty}(H^+)$, and by the Lebesgue dominated theorem and the fact that $\text{meas}(\mathcal{N})=0$, $\|S(\cdot,t)-S^*_{|H^+}\|_{L^1(H^+)}\to 0$ as $t\to\infty$. Now, taking $S^*:=S^*_{|H^0}\chi_{H^0}+S^*_{|H^{+}}\chi_{H^{+}}$, then $S^*\in L^{\infty}(\Omega)$, $S^*\in C({H^0})$, $S^*>0$ on $H^0$, and $\|S(\cdot,t)-S^*\|_{L^{\infty}(H^0)}+\|S(\cdot,t)-S^*\|_{L^1(H^+)}\to 0$ as $t\to\infty$.

Finally, we suppose  that $H^+\ne\emptyset$ and proceed by contradiction to show that \begin{equation}\label{Revised-eq2}
\text{meas}(\{x\in H^{+}\, |\, S^*(x)=0\})>0.\end{equation}
 So, suppose to the contrary that \eqref{Revise-eq1} does not hold. Consider the function
$$
F(x,t)=\frac{1}{t}\int_0^t\frac{I(x,s)}{S(x,s)+I(x,s)}ds,\quad \forall\ x\in \Omega, \ t>0.
$$
It is clear that 
$$ 
0\le F(x,t)\le 1,\quad \forall\ x\in\Omega,\ t>0.
$$
Moreover, for each $x\in \Omega$ satisfying $S^*(x)>0$, it holds that 
$$
\lim_{t\to\infty} F(x, t)=\lim_{t\to\infty}\frac{I(x,t)}{S(x,t)+I(x,t)}=0.
$$
Hence, since $S^*>0$ on $H^0$ and $ \text{meas}(\{x\in H^{+}\ |\ S^*(x)=0\})=0$, it follows from the Lebesgue dominated convergence theorem that 
\begin{equation}\label{Revised-eq3}
    \lim_{t\to\infty}\int_{\Omega}F(x,t)dx=0.
\end{equation}
Let $\varphi$ be the positive eigenfunction associated with $\sigma(d_I,\beta-\gamma)$ satisfying $\max_{x\in\bar{\Omega}}\varphi(x)=1$. By the second equation of \eqref{model-incidence0} and  \eqref{minmax}, we have 
\begin{align*}
    \frac{d}{dt}\int_{\Omega}\varphi Idx=&\int_{\Omega}d_I\varphi\Delta I dx+\int_{\Omega}\Big(\frac{\beta S}{S+I}-\gamma\Big)\varphi Idx\cr
    =& d_I\int_{\Omega}I(\cdot,t)\Delta\varphi dx+\int_{\Omega}\Big(\frac{\beta S}{S+I}-\gamma\Big)\varphi Idx\cr
    =& \sigma(d_I,\beta-\gamma)\int_{\Omega}\varphi I dx+\int_{\Omega}\beta\Big(\frac{S}{S+I}-1\Big)\varphi Idx\cr 
    =& \sigma(d_I,\beta-\gamma)\int_{\Omega}\varphi Idx-\int_{\Omega}\beta\frac{I}{S+I}\varphi Idx\cr 
    \ge & \sigma(d_I,\beta-\gamma)\int_{\Omega}\varphi Idx-\beta_M\varphi_{M}\|I\|_{L^\infty(\Omega)}\int_{\Omega}\frac{I}{S+I}dx\cr 
    \ge & \Big(\sigma(d_I,\beta-\gamma)-\frac{\beta_MC\varphi_M}{\varphi_{m}}\int_{\Omega}\frac{I}{S+I}dx\Big)\int_{\Omega}\varphi Idx.
\end{align*}
By the comparison principle for the ODE, we obtain that 
\begin{equation}\label{Revised-eq4}
\int_{\Omega}\varphi(x) I(x,t)dx\ge e^{t\Big(\sigma(d_I,\beta-\gamma)-M^*\int_{\Omega}F(x,t)dx\Big)}\int_{\Omega}\varphi(x) I_0(x)dx,\quad t>0,
\end{equation}
where $M^*:={\beta_MC\varphi_M}/{\varphi_{m}}$. Note that $ \sigma(d_I,\beta-\gamma)>0$, since $\beta\ge \gamma$ and $H^+\ne\emptyset$. Hence, it follows from \eqref{Revised-eq3}-\eqref{Revised-eq4} that 
$\int_{\Omega}\varphi I(\cdot,t)dx\to \infty$ as $t\to\infty$. This  contradicts the fact that $\sup_{t\ge 1}\|I(\cdot,t)\|_{L^{\infty}(\Omega)}<\infty$. Therefore, \eqref{Revised-eq2} holds.
}

{\rm (ii)} 
 Integrating \eqref{ly1st} over $(0, t)$ and taking $t\to\infty$, we find that 
\begin{equation}\label{I222}
   \int_0^\infty\int_\Omega |\triangledown I|^2dxdt<\infty.  
\end{equation}
By \eqref{bound-4}, the parabolic estimates and the Sobolev embedding theorem, $I\in C^{1+\alpha, 1+\alpha/2}(\bar\Omega\times [1, \infty)])$. So by \eqref{I222} and Lemma \ref{lemma_inf0}, $\|\triangledown I(\cdot, t)\|_{L^2(\Omega)}\to 0$ as $t\to\infty$. Moreover, let  $\omega_I:=\cap_{t\ge 1} \overline{\cup_{s\ge t}  I(\cdot, s))}$, where  the completion is in $C(\bar\Omega)$. Then $\omega_I$ is well-defined, compact, and consists with constants. 

Suppose to the contrary that $0\not\in \omega_I$. By the compactness of $\omega_I$, there exists $\varepsilon_0>0$ such that  
\begin{equation}\label{Iinfe}
\liminf_{t\to\infty}\min_{x\in\bar\Omega} I(x, t)>\varepsilon_0.
\end{equation}
By the first equation of \eqref{model-incidence0}, we have
\begin{equation}\label{S-eq}
 \partial_t S=\frac{(\gamma I-(\beta-\gamma)S)I}{S+I}.
 \end{equation}
 This combined with \eqref{Iinfe} implies that $\liminf_{t\to\infty} S(x, t)\ge \varepsilon_0\gamma/(\beta-\gamma)$ pointwise for $x\in\Omega\backslash H^0$ as $t\to\infty$. By Fatou's Lemma, we have 
 $$
 \int_{\Omega\backslash H^0} \left(\frac{\varepsilon_0\gamma}{\beta-\gamma}+\varepsilon_0\right)dx\le\int_\Omega \liminf_{t\to\infty} (S+I)dx\le \liminf_{t\to\infty} \int_\Omega (S+I)dx=N.
 $$
Since $H^0$ has measure zero and $\int_\Omega 1/(\beta-\gamma)dx=\infty$, the first term in the above inequality equals infinity. This is a contradiction, and therefore $0\in \omega_I$.
Hence there exists $\{t_k\}$ converging to infinity such that $I(\cdot, t_k)\to 0$ in $C(\bar\Omega)$ as { $k\to\infty$.   Thus, $\int_{\Om}S(\cdot,t_k)dx=N-\int_{\Om}I(\cdot,t_k)dx\to N$ as $k\to\infty$.} 
\end{proof}


\subsection{Limiting the movement of infected people}
Then, we consider the impact of limiting the movement of infected people on \eqref{model} with standard incidence mechanism by setting $d_I=0$, i.e.,
\begin{equation}\label{model-incidence1}
\begin{cases}
\displaystyle\partial_t S=d_S\Delta S-\beta(x) \frac{S I}{S+I}+\gamma(x)  I, &x\in\Omega, t>0,\\
\displaystyle\partial_t I=\beta(x) \frac{S I}{S+I}-\gamma(x)  I,&x\in\bar\Omega, t>0,\\
\partial_\nu S=0, &x\in\partial\Omega,t>0,\\
S(x, 0)=S_0(x), \ I(x, 0)=I_0(x), &x\in\bar\Omega.
\end{cases}
\end{equation}

We will establish a prior bound for the solution of \eqref{model-incidence1} first.

\begin{proposition}\label{Pr1}
Suppose that {\rm (A1)-(A2)} holds and $d_S>0$. Then \eqref{model-incidence1} has a unique nonnegative global solution $(S, I)$, where  $$S\in C([0, \infty), C(\overline{\Omega}))\cap C^1((0, \infty), {\rm Dom}_{\infty}(\Delta))\quad \text{and}\quad  I\in C^1([0, \infty), C(\overline{\Omega})).$$ Moreover, there exists $M>0$ depending on (the $L^\infty$ norm of) initial data such that 
\begin{equation}\label{bound}
 \|S(\cdot, t)\|_{L^\infty(\Omega)}, \|I(\cdot, t)\|_{L^\infty(\Omega)}\le M, \ \ \text{for all }  t\ge 0.
\end{equation}
\end{proposition}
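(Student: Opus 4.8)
The plan is to first establish local well-posedness together with the stated regularity by a fixed-point argument, and then to produce the time-uniform $L^\infty$ bound \eqref{bound} by a bootstrap that couples an energy estimate for $I$ with parabolic smoothing for $S$. Exactly as in Propositions \ref{prop}, \ref{prop_ex1} and \ref{prop-incidence}, I would write \eqref{model-incidence1} in mild/integral form, fixing a large $a>0$ and using $e^{t(d_S\Delta-a)}$ for the $S$-equation while treating the $I$-equation as the pointwise ODE $I(x,t)=I_0(x)\exp\big(\int_0^t(\beta S/(S+I)-\gamma)\,ds\big)$. Since $SI/(S+I)$ (set to $0$ at the origin) is Lipschitz on the closed first quadrant, the Banach fixed point theorem yields a unique nonnegative local solution $(S,I)\in[C([0,T),C(\overline{\Omega}))]^2$; the first equation then gives $S\in C([0,T),C(\overline{\Omega}))\cap C^1((0,T),{\rm Dom}_{\infty}(\Delta))$ by \cite{pazy2012semigroups}, and the $I$-ODE gives $I\in C^1([0,T),C(\overline{\Omega}))$, with nonnegativity immediate from the explicit formula for $I$ and the comparison principle for $S$.

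Summing the two equations and integrating over $\Omega$ gives $\frac{d}{dt}\int_\Omega(S+I)\,dx=0$, so $\int_\Omega(S+I)\,dx=N$ for all $t$ and hence $\|S(\cdot,t)\|_{L^1(\Omega)},\|I(\cdot,t)\|_{L^1(\Omega)}\le N$. Crucially, this also pins the spatial average of $S$: since $\frac{d}{dt}\int_\Omega S=-\frac{d}{dt}\int_\Omega I$ and $\int_\Omega I\in[0,N]$, one has $\overline{S(\cdot,t)}=(N-\int_\Omega I\,dx)/|\Omega|\le N/|\Omega|$, so that only the mean-zero part $S-\overline{S}\in\mathcal{Z}$ needs to be controlled by smoothing.

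The engine of the a priori bound has two mutually reinforcing steps. Writing $R_S:=\gamma I-\beta SI/(S+I)$, note the two-sided bound $|R_S|\le c_0 I$ with $c_0:=\max\{\beta_M,\gamma_M\}$, coming from the saturation $SI/(S+I)\le I$. (a) Testing the $I$-equation against $I^{q-1}$ and using $\frac{\beta S}{S+I}I^{q}\le\beta SI^{q-1}$ together with Young's inequality gives, for every $q\ge1$,
\[
\frac{d}{dt}\int_\Omega I^q\,dx\le-\frac{q\gamma_m}{2}\int_\Omega I^q\,dx+C_q\int_\Omega S^q\,dx,
\]
so that $\sup_{t\ge0}\|S(\cdot,t)\|_{L^q(\Omega)}<\infty$ forces $\sup_{t\ge0}\|I(\cdot,t)\|_{L^q(\Omega)}<\infty$. (b) Conversely, decomposing $S=\overline{S}+(S-\overline{S})$ and applying variation of constants on $\mathcal{Z}$ to $S-\overline{S}$, the standard $L^q$--$L^r$ smoothing estimates for the Neumann heat semigroup (the prototype being \eqref{KBB1}), together with $\|R_S(\cdot,t)\|_{L^q(\Omega)}\le c_0\|I(\cdot,t)\|_{L^q(\Omega)}$ and the bounded mean from the previous paragraph, upgrade a uniform $L^q$ bound on $S$ to a uniform $L^{q'}$ bound whenever $\frac{n}{2}(\frac1q-\frac1{q'})<1$, i.e.\ with a fixed Sobolev gain per step. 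Anchoring at the $L^1$ mass bound and alternating (a) and (b), after finitely many steps one reaches $\sup_{t\ge0}\|S(\cdot,t)\|_{L^\infty(\Omega)}<\infty$; feeding this back into (a) (or using that $\partial_t I<0$ wherever $I>\frac{(\beta-\gamma)_+}{\gamma}\|S(\cdot,t)\|_{L^\infty(\Omega)}$) yields $\sup_{t\ge0}\|I(\cdot,t)\|_{L^\infty(\Omega)}<\infty$. The resulting a priori bound excludes finite-time blow-up and extends the local solution globally, giving \eqref{bound}.

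I expect the main obstacle to lie entirely in this last step. Because the $I$-equation carries no diffusion, neither component can be bounded in isolation, and a naive maximum-principle estimate for $\sup_\Omega S$ closes only into exponential-in-time growth (the reaction term at the maximum of $S$ is controlled only by an uncontrolled value of $I$). The resolution is precisely the two-sided bootstrap above: it is anchored by mass conservation, which bounds $\int_\Omega I$ and therefore the mean of $S$, and it is driven by the gain of integrability from the Neumann heat semigroup, while the saturation $SI/(S+I)\le I$ is what simultaneously keeps the $S$-source dominated by $I$ and the growth rate $\beta S/(S+I)$ of $I$ bounded, so that the dissipative term in the energy inequality of (a) survives.
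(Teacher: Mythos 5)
Your proposal is correct, and its skeleton (mild-form local existence via Banach fixed point, mass conservation as the anchor, and the final pointwise ODE comparison for $I$) coincides with the paper's; but the core a priori estimate for $S$ runs along a genuinely different route. The paper proves $\sup_t\|S\|_{L^{2^k}}+\sup_t\|I\|_{L^{2^k}}<\infty$ by induction on $k$, testing \emph{both} equations with $S^{2^{k+1}-1}$ and $I^{2^{k+1}-1}$, combining them with a weight $C_3=\gamma_m/(4C_2)$, and absorbing the bad term $\int_\Omega S^{2^{k+1}}dx$ into the dissipation $\int_\Omega|\nabla S^{2^k}|^2dx$ via the interpolation inequality $\|u\|_{L^2}^2\le\epsilon\|\nabla u\|_{L^2}^2+C_\epsilon\|u\|_{L^1}^2$ (see \eqref{ik0}--\eqref{si}); it then passes from all finite $L^p$ bounds to $\sup_t\|S\|_{L^\infty}<\infty$ by the parabolic $W^{2,1}_p$ estimate and the embedding $W^{2,1}_p\hookrightarrow C(\bar\Omega\times[\tau,\tau+1])$ for $p>N+2$. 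Your step (a) is precisely the paper's estimate \eqref{ik0}, but in place of the paper's energy estimate \eqref{ik} for $S$ you run a Duhamel bootstrap on the mean-zero part $S-\overline{S}\in\mathcal{Z}$, using the spectral-gap $L^q$--$L^{q'}$ smoothing of the Neumann heat semigroup (in the spirit of \eqref{KBB1}) with source dominated by $c_0 I$, gaining almost $2/n$ in $1/q$ per step and jumping to $L^\infty$ once $q>n/2$; the mean itself is pinned by $\overline{S(\cdot,t)}\le N/|\Omega|$ from conservation. Both routes are sound and exploit the same two structural facts --- the saturation $SI/(S+I)\le\min\{S,I\}$ and the dissipative $-\gamma I$ term --- and both finish identically with \eqref{stIe} to bound $I$ in $L^\infty$. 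What each buys: your semigroup bootstrap avoids the interpolation inequality and parabolic maximal regularity entirely, at the cost of invoking mean-zero decay estimates; the paper's coupled Moser-type induction stays within elementary integral estimates and needs no spectral gap, but must then call on $L^p$ regularity theory to reach $L^\infty$. Either way \eqref{bound} follows and the local solution extends globally.
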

\begin{proof}
Similar to Proposition \ref{prop-incidence},   \eqref{model-incidence1} has a unique local nonnegative  solution $(S, I)$,  where  $S\in C([0, T), C(\overline{\Omega}))\cap C^1((0, T), {\rm Dom}_{\infty}(\Delta))$ and $I\in C^1([0, T), C(\overline{\Omega}))$ for some $T>0$. It remains to show the boundedness of the solution. 

We claim that for any nonnegative integer $k$ there exists $C>0$ depending on initial data such that 
$\|S(\cdot, t)\|_{L^{2^{k}}(\Omega)}, \|I(\cdot, t)\|_{L^{2^{k}}(\Omega)}\le C$ for all $t\ge 0
$. We prove this claim by induction. It is easy to see that the claim holds for $k=0$. Now we assume that the claim holds for $k$ and will show that it holds for $k+1$. To see it, multiplying both sides of the second equation of \eqref{model-incidence1} by $I^{2^{k+1}-1}$ and integrating over $\Omega$, we obtain
\begin{eqnarray}\label{ik0}
    \frac{1}{2^{k+1}}\frac{d}{dt}\int_\Omega  I^{2^{k+1}}dx&\le& \beta_M \int_\Omega  \frac{S  I^{{2^{k+1}}-1} I}{S+I}dx-\gamma_m\int_\Omega  I^{2^{k+1}}dx \nonumber\\
    &\le& C_0\int_\Omega  S^{2^{k+1}}dx-\frac{\gamma_m}{2}\int_\Omega  I^{2^{k+1}}dx,
\end{eqnarray}  
where we have used Young's inequality in the last step. 

Multiplying both sides of the first equation of \eqref{model-incidence1} by $S^{2^{k+1}-1}$ and integrating over $\Omega$, we obtain
     \begin{eqnarray}\label{ik}
    \frac{1}{2^{k+1}}\frac{d}{dt}\int_\Omega S^{2^{k+1}}dx&=&-\frac{2^{k+1}-1}{2^{k+1}}d_S\int_\Omega |\triangledown S^{2^k}|^2dx- \int_\Omega \frac{\beta S^{2^{k+1}}I}{S+I} pdx+\int_\Omega \gamma S^{2^{k+1}-1}I dx\nonumber\\
   &\le& -C_1 \int_\Omega |\triangledown S^{2^k}|^2dx+C_2\left(\int_\Omega  S^{2^{k+1}} dx+\int_\Omega  I^{2^{k+1}} dx\right),
   \end{eqnarray}
where we have used Young's inequality in last  step. 

Multiplying \eqref{ik} by $\ds C_3:=\frac{\gamma_m}{4C_2}$ and summing up with \eqref{ik0}, we have
\begin{equation*}
\frac{d}{dt}\int_\Omega \left(C_3S^{2^{k+1}}+I^{2^{k+1}}\right)dx  \le -C_4 \int_\Omega |\triangledown S^{2^k}|^2dx+ C_5\int_\Omega  S^{2^{k+1}} dx-C_6\int_\Omega  I^{2^{k+1}} dx.
\end{equation*}
By the following interpolation inequality
  \begin{eqnarray*}
\|u\|_{L^2(\Omega)}^2\le \epsilon \|\triangledown u\|^2_{L^2(\Omega)}+C_\epsilon \|u\|_{L^1(\Omega)}^2, \ \ \forall u\in H^1(\Omega),
\end{eqnarray*}
we obtain 
\begin{equation}\label{si}
\frac{d}{dt}\int_\Omega \left(C_3S^{2^{k+1}}+I^{2^{k+1}}\right)dx  \le  C_7\left(\int_\Omega  S^{2^{k}} dx\right)^2-C_8\int_\Omega \left(C_3S^{2^{k+1}}+I^{2^{k+1}}\right) dx.
\end{equation}
By the assumption that the claim holds for $k$ and \eqref{si} there exists $C>0$ such that 
$\|S(\cdot, t)\|_{L^{2^{k+1}}(\Omega)}, \|I(\cdot,t)\|_{L^{2^{k+1}}(\Omega)}\le C$. This proves the claim. 

Fixing $p>N+2$, by the claim and the parabolic $L^p$ estimate,  there exists $C>0$ such that $\|S\|_{W^{2, 1}_p(\Omega\times (\tau, \tau+1))}<C$ for any $\tau>0$. Since $W^{2, 1}_p(\Omega\times (\tau, \tau+1))$ can be embedded into $C(\bar\Omega\times [\tau, \tau+1])$, we obtain the boundedness of $S$. We rewrite the equation of $I$ as
\begin{equation}\label{stIe}
\partial_t I=\frac{((\beta-\gamma)S-\gamma I)I}{S+I}.
\end{equation}
Hence, $\|I(\cdot, t)\|_{L^\infty(\Omega)}\le \max_{0\le s\le t}\{\|I_0\|_{L^\infty(\Omega)}, \  \|(\beta+\gamma)S(\cdot, s)\|_{L^\infty(\Omega)}/\gamma_m\}$ for any $t\ge 0$. This proves the boundedness of $I$.
\end{proof}

We are ready to study the asymptotic behavior of the solution of \eqref{model-incidence1}. Let 
$$
\kappa=\frac{\gamma}{(\beta-\gamma)}\chi_{H^+\cap\{I_0>0\}}
$$
and define 
$$
V(S, I)=\frac{1}{2} \int_\Omega\left(S^2+\kappa I^2 \right)dx. 
$$
It is easy to check that 
\begin{align}\label{vvv}
\ds\dot V(S, I)=&-d_S\int_\Omega|\triangledown S|^2dx+\int_{H^-\cup H^0\cup \{I_0=0\} } S\left(-{\beta}\frac{SI}{S+I}+\gamma I\right)dx\cr &-\int_{H^+\cap \{I_0>0\}} (\beta-\gamma)_+\frac{(S-\kappa I)^2}{S+I}Idx.
\end{align}
The function $V$ does not satisfy $\dot V\le 0$ (the second term on the right hand side of \eqref{vvv} is positive), but it still enables us to conclude the convergence of the solution.

\begin{theorem}\label{theorem_stddi}
Suppose that {\rm (A1)-(A2)} holds and $d_S>0$. Let $(S, I)$ be the solution of \eqref{model-incidence1}. Then the following statements hold:

\begin{itemize}
    \item [\rm (i)] There is a positive number $\overline{S}$ such that 
\begin{equation}\label{xx-1}
\sup_{t\ge 0}\left(\|S(\cdot, t)\|_{L^\infty(\Omega)}+\|I(\cdot, t)\|_{L^\infty(\Omega)}\right)\le \overline{S}.
\end{equation}
Furthermore, there exist  $t_1>0$ and  $\underline{S}>0$, independent of initial data, such that
\begin{equation}\label{xx-2}
   \underline{S}\le \min_{x\in\overline{\Omega}}S(x,t)\leq \max_{x\in\overline{\Omega}}S(x,t)\le \overline{S}, \quad \forall\ t\ge t_1,
\end{equation} 
and
\begin{align}\label{xx-3}
    (R-1)_+\underline{S}\chi_{\{I_0>0\}}\le & \liminf_{t\to\infty}I(x,t)\cr
    \leq & \limsup_{t\to\infty}I(x,t)\leq (R-1)_+\overline{S}\chi_{\{I_0>0\}}, \quad \forall\ x\in\bar\Omega, 
\end{align}
{  where $R:=\beta/\gamma$}.

\item[\rm (ii)] If $1/(\beta-\gamma)\in L^1(H^+\cap\{I_0>0\})$, then 
$$
\lim_{t\to\infty} (S(x, t), I(x, t)=(S^*, I^*)), \ \ \text{uniformly for}\  x\in\bar\Omega,
$$
 where 
 $$
 I^*=\frac{(\beta-\gamma)_+S^*}{\gamma}\chi_{H^+\cap\{I_0>0\}}
 $$
 and $S^*$ is a positive constant given by 
\begin{equation}\label{xx-10}
S^*=\frac{N}{|\Omega|+\int_{H^+\cap\{I_0>0\}} \frac{(\beta-\gamma)_+}{\gamma} dx}.
\end{equation}
\end{itemize}
\end{theorem}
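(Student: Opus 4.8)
The plan is to treat the two parts separately, with the conservation law $\int_\Omega(S+I)\,dx=N$ and the scalar structure of the $I$-equation as the common tools. Rewriting the second equation of \eqref{model-incidence1} as $\partial_t I=(\beta-\gamma)I-\beta I^2/(S+I)$, note that for frozen $S$ the right-hand side, as a function of $I\ge 0$, is a logistic-type nonlinearity that is strictly increasing in $S$ and vanishes exactly at $I=0$ and at the carrying capacity $(R-1)S$. Hence $I$ is \emph{slaved} to $S$: comparison with the autonomous ODEs obtained by replacing $S$ by its uniform upper bound $\overline S$ and eventual lower bound $\underline S$ gives the $\limsup$/$\liminf$ bounds in \eqref{xx-3}, $I$ being attracted to $(R-1)_+S$ on $H^+\cap\{I_0>0\}$ and to $0$ elsewhere (since $(R-1)_+$ vanishes off $H^+$ and $I\equiv 0$ on $\{I_0=0\}$ by the ODE).

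For part (i) the uniform upper bound $\overline S$ is supplied by Proposition \ref{Pr1}; the crux is that $\underline S$ and $t_1$ must be independent of the initial data. First I would derive the clean inequality $\frac{d}{dt}\int_\Omega S\,dx\ge \gamma_m N-(\gamma_m+\beta_M)\int_\Omega S\,dx$, using $\gamma I-\beta SI/(S+I)\ge \gamma_m I-\beta_M S$ together with $\int_\Omega I=N-\int_\Omega S$; this forces $\int_\Omega S(\cdot,t)\,dx\ge m_0$ for all $t\ge t_0$, with $m_0,t_0$ universal. Next, since $SI/(S+I)\le S$ gives $\partial_t S\ge d_S\Delta S-\beta_M S$, the function $e^{\beta_M t}S$ is a supersolution of the Neumann heat equation, so $S(\cdot,t)\ge e^{-\beta_M}e^{d_S\Delta}S(\cdot,t-1)$; as the Neumann heat kernel on the bounded connected domain is bounded below by a positive constant after unit time, $S(\cdot,t)\ge e^{-\beta_M}c\,\|S(\cdot,t-1)\|_{L^1(\Omega)}\ge \underline S$ for all $t\ge t_0+1=:t_1$. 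With $\underline S\le S\le\overline S$ in force, the slaving comparison yields \eqref{xx-3}.

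For part (ii) I would run the Lyapunov function $V(S,I)=\tfrac12\int_\Omega(S^2+\kappa I^2)$; the hypothesis $1/(\beta-\gamma)\in L^1(H^+\cap\{I_0>0\})$ guarantees $\kappa\in L^1$, so $V$ is finite and bounded. In \eqref{vvv} the first and third terms are nonpositive, and the obstruction is the nonnegative middle term $e(t)$, supported on $H^-\cup H^0\cup\{I_0=0\}$. The key is that $e(t)$ is time-integrable: differentiating the nonincreasing nonnegative quantity $\int_{H^-\cup H^0}I\,dx$ yields $\int_0^\infty\!\int_{H^-\cup H^0}\big[(\gamma-\beta)I+\beta I^2/(S+I)\big]dx\,dt<\infty$, while the integrand of $e(t)$ is pointwise bounded by $\overline S(\gamma-\beta)I+(\gamma_M\overline S/\beta_m)\,\beta I^2/(S+I)$, so $\int_0^\infty e(t)\,dt<\infty$. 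Integrating \eqref{vvv} and using $V\ge 0$ then gives $\int_0^\infty\!\int_\Omega|\nabla S|^2\,dx\,dt<\infty$, $\int_0^\infty\!\int_{H^+\cap\{I_0>0\}}(\beta-\gamma)\frac{(S-\kappa I)^2}{S+I}I\,dx\,dt<\infty$, and convergence of $V(t)$. Parabolic regularity for the $S$-equation (bounded forcing) provides the Hölder-in-time continuity needed for Lemma \ref{lemma_inf0}, so both integrands tend to $0$; with the Poincar\'e inequality and precompactness of $\{S(\cdot,t)\}$ in $C(\bar\Omega)$ this forces $\|S(\cdot,t)-\overline{S(\cdot,t)}\|_{L^\infty(\Omega)}\to 0$ (spatial homogenization of $S$), while $S-\kappa I\to 0$ in a weighted sense on $H^+\cap\{I_0>0\}$, i.e. $I\to(R-1)S$ there.

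Finally I would identify the constant and upgrade to uniform convergence. Inserting the asymptotic shape $I\to (R-1)_+S\,\chi_{H^+\cap\{I_0>0\}}$ into $N=\int_\Omega(S+I)$ pins the average to $S^*$ as in \eqref{xx-10}, giving $\overline{S(\cdot,t)}\to S^*$, hence $S\to S^*$ uniformly and $I\to I^*=(R-1)_+S^*\chi_{H^+\cap\{I_0>0\}}$ pointwise. I expect the main obstacle to be the \emph{uniform}-in-$x$ convergence of $I$ near the interface $\partial H^+$, where $\beta-\gamma\to 0$, so the attraction rate of the $I$-dynamics degenerates and the trapping time is not uniform. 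The remedy I would use is to sandwich $I$ between the solutions of the ODEs obtained by replacing $S$ with $\min_{x}S(\cdot,t)$ and $\max_{x}S(\cdot,t)$ (both $\to S^*$) and to observe that near the interface the whole trapping interval $[(R-1)\min_x S,(R-1)\max_x S]$ and the target $(R-1)S^*$ collapse at the rate $(R-1)=\tfrac{\beta-\gamma}{\gamma}\to 0$; combined with $\mathrm{osc}\,S(\cdot,t)\to 0$ this yields $\|I(\cdot,t)-I^*\|_{L^\infty(\Omega)}\to 0$. This interface step is exactly where the integrability of $1/(\beta-\gamma)$ is essential, both to keep $V$ finite and to make the weighted estimates close.
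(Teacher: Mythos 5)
Your proposal follows essentially the same route as the paper's proof: the same mass lower bound $\frac{d}{dt}\int_\Omega S\,dx\ge \gamma_m N-(\beta_M+\gamma_m)\int_\Omega S\,dx$ combined with the heat-semigroup/Harnack lower bound and ODE comparison for part (i), and for part (ii) the same Lyapunov function $V=\frac12\int_\Omega(S^2+\kappa I^2)\,dx$ with the sign-indefinite middle term of \eqref{vvv} shown integrable by integrating the $I$-equation over $H^-\cup H^0\cup\{I_0=0\}$, followed by Lemma \ref{lemma_inf0}, spatial homogenization of $S$, and identification of the constant $S^*$ through the conservation law and dominated convergence. Your closing sandwich argument for the uniform convergence of $I$ near the interface $\partial H^+$ is in fact more detailed than the paper, which simply reads the limit off \eqref{stIe} once $S(\cdot,t)\to S^*$ uniformly.
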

\begin{proof}

{\rm (i)} Note that \eqref{xx-1} follows from Proposition \ref{Pr1}. Next, observe that
\begin{align*}
\frac{d}{dt}\int_{\Omega}Sdx=&\int_{\Omega}\gamma Idx-\int_{\Omega}\beta\frac{I}{I+S}Sdx\cr 
\ge& \gamma_m\int_{\Omega}Idx-\beta_M\int_{\Omega}Sdx\cr = &\gamma_m
\left(N-\int_{\Omega}Sdx\right)-\beta_M\int_{\Omega}Sdx\cr
=&\gamma_mN-(\beta_M+\gamma_m)\int_{\Omega}Sdx.
\end{align*}
Therefore, by $\int_{\Omega}S_0dx\ge 0$ and  the comparison principle, we have 
that 
\begin{equation}\label{xx-6}
\int_{\Omega}S(x, t)dx\ge \frac{\gamma_mN}{\beta_M+\gamma_m}(1-e^{-t(\beta_M+\gamma_m)}),\quad \forall\ t\ge 0.
\end{equation}
Next by \eqref{model-incidence1}, we see that 
$$
\begin{cases}
    \partial_tS\ge d_S\Delta S-\beta_MS, & x\in\Omega, t>0, \cr 
    \partial_{\nu}S=0, & x\in\partial\Omega,\ t>0.
\end{cases}
$$
For any $t_0>0$, it  follows from the comparison principle for parabolic equations that 
\begin{equation}\label{LKO1}
S(\cdot, t+t_0)\ge e^{-t\beta_M}e^{td_S\Delta}S(\cdot, t_0),\quad \forall\ t>0.
\end{equation}
Thanks to the Harnack's inequality (see Lemma \ref{lemma_harnack}), there is a positive constant $c_0$ such that 
\begin{equation}\label{xx-7}
e^{td_S\Delta}S(\cdot, t_0)(x)\ge c_0e^{td_S\Delta}S(\cdot, t_0)(y),\quad \forall\ td_S\ge 1, \ x,y\in\bar\Omega,\ t_0>0.
\end{equation}
This in turn together with \eqref{xx-6} implies that
\begin{align*}
\min_{x\in\overline{\Omega}}e^{d_St\Delta}S(\cdot,t_0)(x)\ge & \frac{c_0}{|\Omega|}\int_{\Omega}e^{td_S\Delta}S(\cdot,t_0)(y)dy\cr 
= & \frac{c_0}{|\Omega|}\int_{\Omega}S(y,t_0)dy\cr 
\ge &  \frac{\gamma_m Nc_0}{|\Omega|(\beta_M+\gamma_m)}(1-e^{-t_0(\beta_M+\gamma_m)}),\quad \quad t\ge \frac{1}{d_S},\ t_0>0.
\end{align*}
As a result, it follows from \eqref{LKO1} that 
{
$$
S\left(x,\frac{1}{d_S}+t_0\right)\ge \frac{\gamma_{m}Nc_0 e^{-\frac{\beta_M}{d_S}}}{|\Omega|(\beta_M+\gamma_{m})}(1-e^{-t_0(\beta_M+\gamma_m)}),\quad \quad  \ x\in\bar\Omega, t_0>0.
$$
}
Therefore taking $t_1=\frac{1}{d_S}+\frac{\ln(2)}{\beta_M+\gamma_{m}}$,   \eqref{xx-2} holds with $\underline{S}:=\frac{\gamma_mNc_0 e^{-\frac{\beta_M}{d_S}}}{2|\Omega|(\beta_M+N\gamma_{m})}>0$, where $\underline{S}$ and $t_1$ are independent of the initial data.

Next, we show that \eqref{xx-3} holds. To this end, we first note that 
\begin{align*}
    I_t= \beta\Big((1-r)-\frac{I}{I+S}\Big)I\le & \beta\Big((1-r)-\frac{I}{I+\overline{S}}\Big)I\cr 
    =& \gamma\Big((R-1)\overline{S}-I\Big)\frac{I}{I+\overline{S}}, \ \quad t>0,
\end{align*}
which in view of the comparison principle for ordinary differential equations implies that $\limsup_{t\to\infty}I(x,t)\le (R-1)_{+}\overline{S}$. Recalling that $I(x,t)=0$ for all $t>0$ whenever $I_0(x)=0$, we then conclude that $\limsup_{t\to\infty}I(x,t)\le (R-1)_{+}\overline{S}\chi_{\{I_0>0\}}$.

Similarly, observing that 
\begin{align*}
    I_t= \beta\Big((1-\frac{\gamma}{\beta})-\frac{I}{I+S}\Big)I\ge & \beta\Big((1-r)-\frac{I}{I+\underline{S}}\Big)I\cr 
    =& \gamma\Big((R-1)\underline{S}-I\Big)\frac{I}{I+\underline{S}},\quad t>t_1,
\end{align*}
we can proceed as in the previous case to establish that $\liminf_{t\to\infty}I(x,t)\ge (R-1)_{+}\underline{S}\chi_{\{I_0>0\}}$, which completes the proof of {\rm (i)}.

{\rm (ii)} By the equation of $I$, we have 
\begin{equation*}
\int_0^t\int_{H^-\cup H^0\cup\{I_0=0\}} \left(\beta\frac{SI}{S+I}-\gamma I\right) dxds=\int_{H^-\cup H^0\cup \{I_0= 0\}} I(x, t)dx-\int_{H^-\cup H^0\cup\{I_0= 0\}} I_0dx.
\end{equation*}
Note that 
$$
\beta \frac{SI}{S+I}-\gamma I=\frac{((\beta-\gamma)S-{ \gamma}I)I}{S+I}\le 0, \ \ \forall x\in H^-\cup H^0\cup\{I_0=0\}.
$$
We have 
$$
0\le \int_0^\infty\int_{H^-\cup H^0\cup \{I_0=0\}} \left(-\beta\frac{SI}{S+I}+\gamma I\right) dxds<\infty,
$$
which together with \eqref{xx-1} yields that 
$$
0\le \int_0^\infty\int_{H^-\cup H^0\cup\{I_0=0\}} S\left(-{\beta}\frac{SI}{S+I}+\gamma I\right)dxdt<\infty.
$$
Hence integrating \eqref{vvv} over $(0, \infty)$, we obtain that 
\begin{equation}\label{cv-1}
    \int_0^\infty \int_\Omega|\triangledown S|^2dxdt<\infty
\end{equation}
and
\begin{equation}\label{cv-2}
    \int_0^\infty \int_{H^+\cap\{I_0>0\}} (\beta-\gamma)_+I\frac{(S-\kappa I)^2}{S+I}dxdt<\infty.
\end{equation}

By \eqref{xx-1}, we have 
$$
\sup_{t\ge 0}\|\frac{\beta SI}{S+I}-\gamma I\|_{L^\infty(\Omega)}<\infty.
$$ 
So
by the regularity theory for parabolic equations and Sobolev embedding theorem, the mappings $\nabla S(x,t) $ and $S$ are H\"older continuous on $\overline{\Omega}\times [1,\infty)$, and $\{S(\cdot,t)\}_{t\ge 1}$ is precompact in $C^{1+\alpha}(\Omega)$, $0<\alpha<1$. Then by \eqref{cv-1} and Lemma \ref{lemma_inf0}, $\int_{\Omega}|\nabla S|^2dx\to 0$ as $t\to\infty$. Hence the set $w_S:=\cap_{t\ge 1}\overline{\cup_{s\ge t}\{S(\cdot,s)\}}$ consists of positive constant functions. Furthermore since $ \sup_{t\ge 0}\|\partial_t I(\cdot,t)\|_{L^\infty(\Omega)}<\infty$ and $S$ is H\"older continuous on $\overline{\Omega}\times [1,\infty)$,   the mapping
$$
t\mapsto \int_{H^+\cap\{I_0>0\}} (\beta-\gamma)_+\frac{(S-\kappa I)^2}{S+I}Idx
$$ 
is H\"older continuous on $[1,\infty)$. Therefore by Lemma \ref{lemma_inf0} and \eqref{cv-2},  we have
\begin{equation}\label{xx-8}
  \lim_{t\to\infty}\int_{H^+\cap\{I_0>0\}} (\beta-\gamma)_+\frac{(S-\kappa I)^2}{S+I}Idx=0.
\end{equation}
Now, let $S^*\in w_S$. Then there is a sequence $t_k\to \infty$ such that $S(\cdot,t_k)\to S^*$ uniformly on $\bar\Omega$ as {$k\to\infty$}. Since \eqref{xx-8} holds,   after passing to a subsequence if necessary,  we may suppose that 
\begin{equation}\label{xx-9}
    \lim_{k\to\infty}I(x,t_k)\frac{(S^*-\kappa(x)I(x,t_k))^2}{S^*+I(x,t_k)}=0 \quad \text{ a.e. on }\ H^+\cap\{I_0>0\}.
\end{equation}
However, when $x\in H^+\cap \{I_0>0\}$, we have from \eqref{xx-3} that $\liminf_{t\to\infty}I(x,t)\ge \underline{S}(\beta(x)-\gamma(x))>0$. Therefore, we conclude from \eqref{xx-9} that 
$$
\lim_{{ k}\to\infty}I(x,t_k)=\frac{S^*}{\kappa(x)}=\frac{(\beta(x)-\gamma(x))S^*}{\gamma(x)} \quad \text{ a.e. on }\ H^+\cap \{I_0>0\}.
$$
By \eqref{xx-3} again, $\lim_{t\to\infty}I(x,t)=0$ almost everywhere for  $x\in\Omega\setminus(H^+\cap \{I_0>0\})$. So by the dominated convergence theorem, we have
\begin{eqnarray*}
N&=&\lim_{k\to\infty}\int_{\Omega}(S(\cdot,t_k)+I(\cdot,t_k))dx\\
&=&|\Omega|S^*+S^*\int_{H^+\cap \{I_0>0\}}\frac{1}{\kappa}dx=\Big(|\Omega|+\int_{H^+\cap\{I_0>0\}}\frac{\beta-\gamma}{\gamma}dx\Big)S^*.
\end{eqnarray*}
This yields that 
$$
S^*=\frac{N}{|\Omega|+\int_{H^+\cap\{I_0>0\}}\frac{\beta-\gamma}{\gamma}dx}.
$$
Since $S^*$ is independent of the chosen subsequence, we have
$$
w_S=\Big\{\frac{N}{|\Omega|+\int_{H^+\cap\{I_0>0\}}\frac{\beta-\gamma}{\gamma}dx} \Big\},
$$ 
and therefore \eqref{xx-10} holds.  Since $S(\cdot,t)\to S^*$  uniformly on $\Omega$ as $t\to\infty$, we obtain from \eqref{stIe} that 
$I(\cdot,t)\to \frac{S^*(\beta-\gamma)^+}{\gamma}\chi_{H^+\cap\{I_0>0\}}$ uniformly on $\bar\Omega$ as $t\to\infty$.
\end{proof}

\section{Simulations}
In this section, we run numerical simulations to illustrate the results. Let $\Omega=[0, 1]$, $S_0=2+\cos(\pi x)$ and $I_0=1.5+\cos(\pi x)$. Then the total population is $N=\int_\Omega (S_0+I_0)dx=3.5$. 

\subsection{Mass action mechanism}
We first simulate the models with  mass action mechanism. 

\noindent  \textbf{Simulation 1: control the movement of susceptible people}. Let $d_S=0$, $d_I=1$, and $\gamma=4-\pi \sin(\pi x)$. Firstly, choose $\beta=0.5$, and so $N<\int_\Omega \gamma/\beta dx=4$, i.e. the total population is small. By Theorem \ref{theorem_massds}, we have $I(\cdot, t)\to 0$ as $t\to\infty$ and the infected population will be eliminated, which is confirmed by Figure \ref{fig1a}. Then, choose $\beta=2$, and so $N>\int_\Omega \gamma/\beta dx=1$, i.e. the total population is large. Now Corollary \ref{cor1} predicts that $S(\cdot, t)\to \gamma/\beta$ and $I\to I^*={(N-\int_{\Omega}r dx)}/{|\Omega|}=2.5$, which is confirmed by Figure \ref{fig1b}. 
{  Finally, choose $\gamma=0.5(1+x)$ such that $\int_\Omega rdx=\int_\Omega \gamma/\beta dx\approx 2.82$ and 
$S_0-r$ changes sign on $\Omega$ (By Theorem \ref{theorem_massds} and Corollary  \ref{cor1}, we have already known that $N=\int_\Omega rdx$ is a threshold value for the two alternatives in Theorem  \ref{theorem_massds} if $\beta$ is a constant or $S_0-r$ does not change sign on $\Omega$). Replace the initial condition by $I_0=a+cos(\pi x)$ and then $N=\int_\Omega (S_0+I_0)dx=a+2$. Figure \ref{fig1bN} shows $\int_\Omega I(x, 40)dx$ as a function of $N$, which indicates that a bifurcation appears at $N\approx 2.82$, i.e.  $N\approx\int_\Omega r dx$ is a threshold value for alternative (i) vs (ii) in Theorem \ref{theorem_massds}. It is still an open problem to rigorously show whether $N=\int_\Omega rdx$ is the threshold value for the two alternatives in Theorem \ref{theorem_massds} without the additional assumptions in Corollary  \ref{cor1}.} From the simulations, we can see that the disease can be eliminated only when the total population is small by controlling the movement of susceptible people.

\begin{figure}
     \centering
     \begin{subfigure}[b]{0.3\textwidth}
         \centering
          \includegraphics[scale=.3]{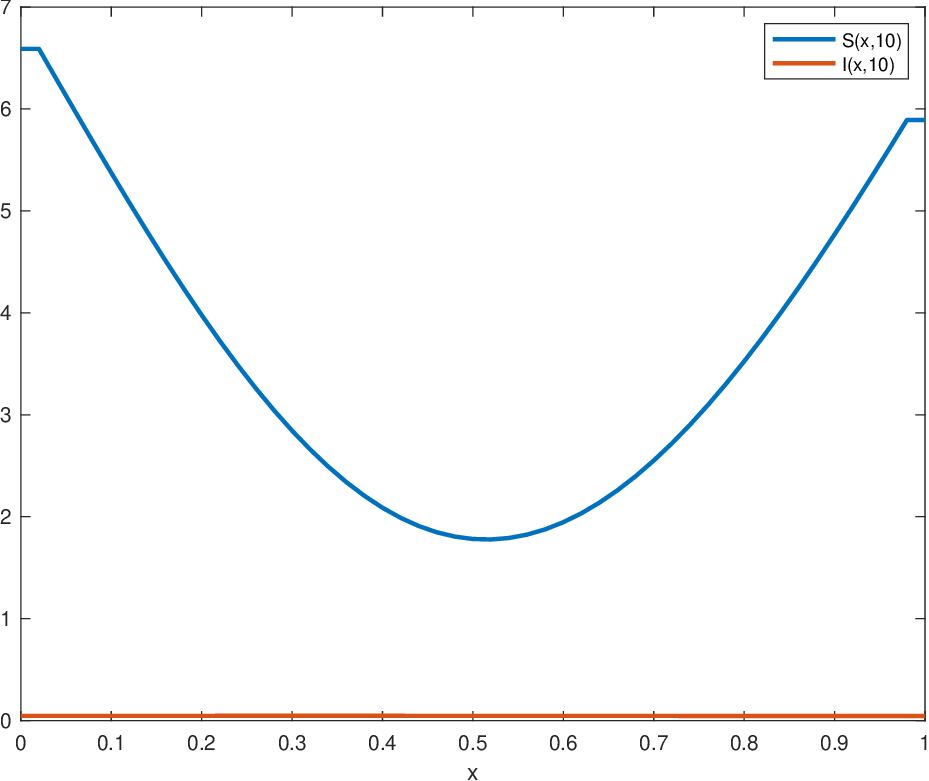}
         \caption{}
         \label{fig1a}
     \end{subfigure}
     \begin{subfigure}[b]{0.3\textwidth}
        \centering
     \includegraphics[scale=.3]{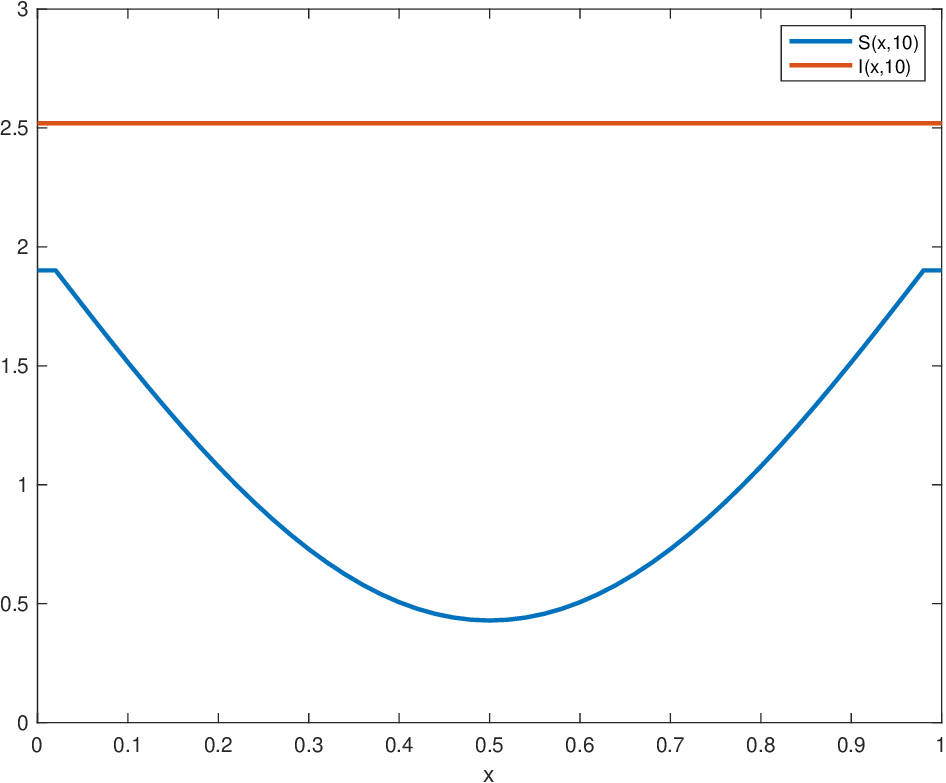}
             \caption{}
         \label{fig1b}
     \end{subfigure}
       \begin{subfigure}[b]{0.3\textwidth}
        \centering
     \includegraphics[scale=.3]{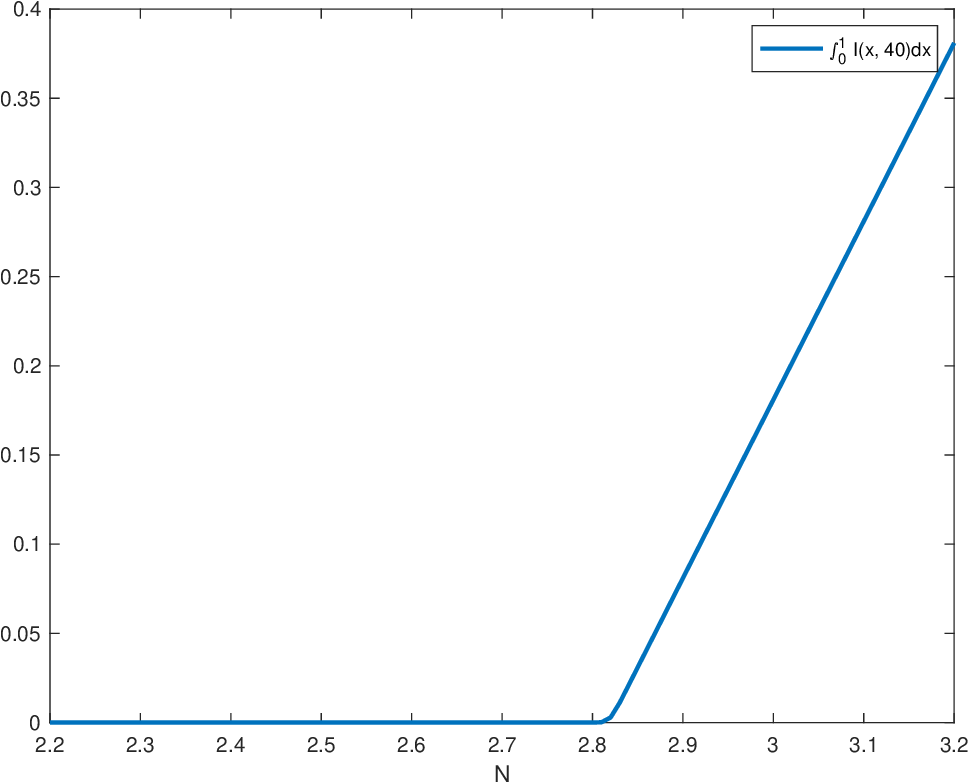}
             \caption{}
         \label{fig1bN}
     \end{subfigure}
     \caption{Simulations of the model with mass action mechanism and $d_S=0$. Parameters: $d_I=1$, $\gamma=4-\pi \sin(\pi x)$. Left figure: $\beta=0.5$ and $N<\int_\Omega \gamma/\beta dx$; middle figure: $\beta=2$ and $N>\int_\Omega \gamma/\beta dx$; {  right figure: $\beta=0.5(1+x)$ and $I_0$ is replaced by $a+cos(\pi x)$ with $a\in [0.2, 1.2]$.} }
     \label{fig1}
\end{figure}

\noindent  \textbf{Simulation 2: control the movement of infected people}. Let $d_S=1$ and $d_I=0$.  Firstly, choose $\beta=0.2$ and $\gamma=4-\pi \sin(\pi x)$ such that $H^+=\emptyset$. By Theorem \ref{theorem_S}, $S(\cdot, t)$ converges to $N/|\Omega|=3.5$ and $I(\cdot, t)$ converges to 0, which is confirmed by Figure \ref{fig2a0}. 
Then, choose $\beta=1$ and $\gamma=4-\pi \sin(\pi x)$ such that $H^+\neq \emptyset$ and  the minimum of $\gamma/\beta$ is attached at $x=0.5$. By Theorem \ref{theorem_S}, the infected people will concentrate at $x=0.5$, which is confirmed by Figure \ref{fig2a}. Finally we choose $\beta=2$ and $\gamma=14-4\pi\sin(4\pi x)$ such that  $H^+\neq \emptyset$. The minimum of $\gamma/\beta$ is attached at $x=1/8, 5/8$, and the infected people  concentrate at these two points as shown in Figure \ref{fig2b}. From the simulations, we can see that the infected people may not be eliminated by controlling the movement of infected people if  $H^+\neq \emptyset$. Instead, the infected people will concentrate at certain points that are of the highest risk.

\begin{figure}
     \centering

        \begin{subfigure}[b]{0.3\textwidth}
         \centering
          \includegraphics[scale=.3]{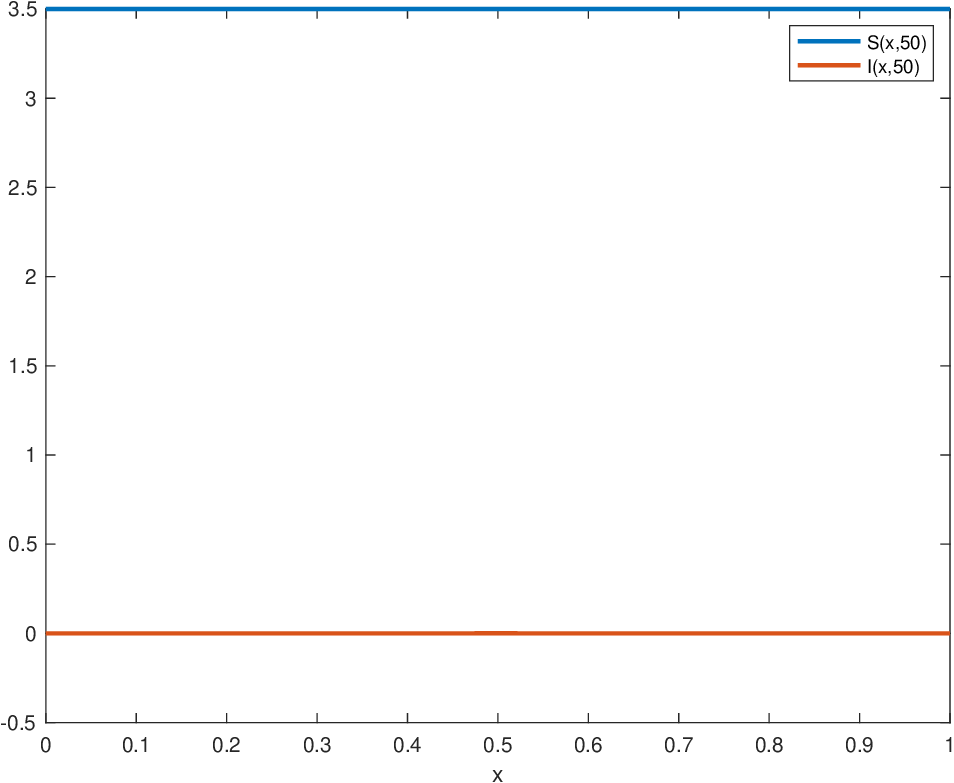}
         \caption{}
         \label{fig2a0}
     \end{subfigure}
     \begin{subfigure}[b]{0.3\textwidth}
         \centering
          \includegraphics[scale=.3]{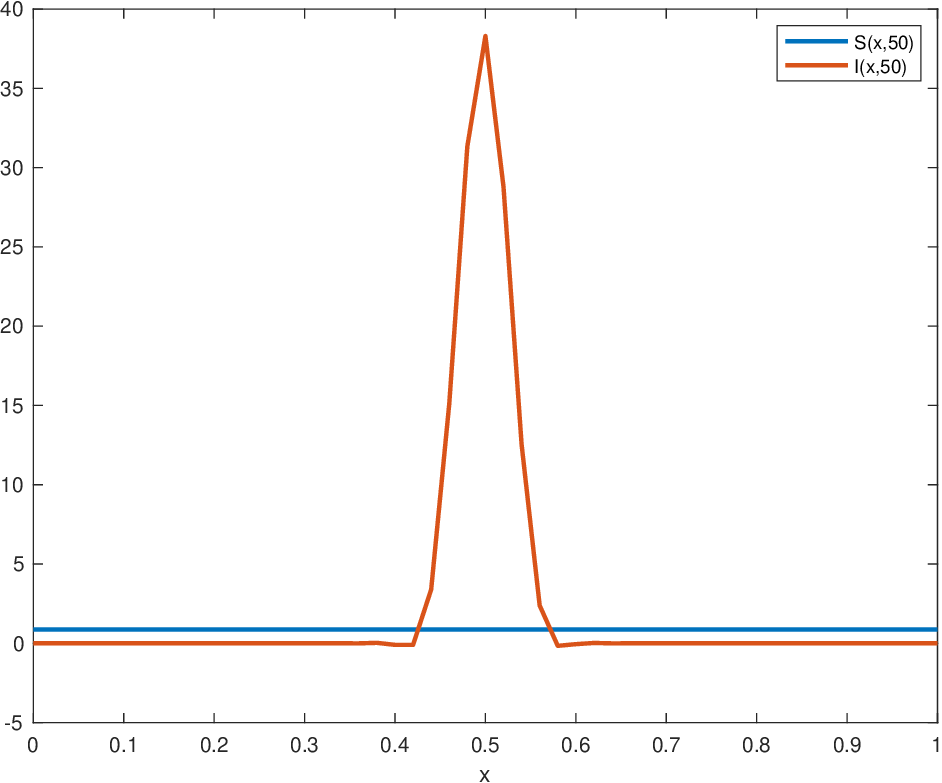}
         \caption{}
         \label{fig2a}
     \end{subfigure}  
     \begin{subfigure}[b]{0.3\textwidth}
        \centering
     \includegraphics[scale=.3]{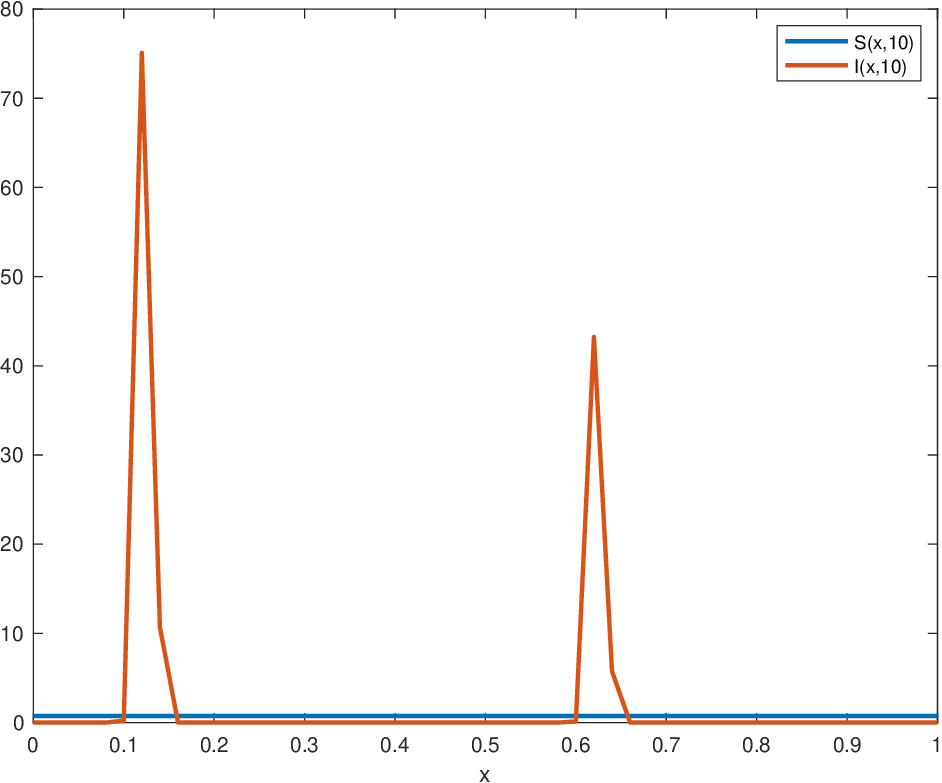}
             \caption{}
         \label{fig2b}
     \end{subfigure}
  
     \caption{Simulations of the model with mass action mechanism and $d_S=1, d_I=0$. Left figure: $\beta=0.2$, $\gamma=4-\pi \sin(\pi x)$, and $H^+=\emptyset$; middle figure: $\beta=1$, $\gamma=4-\pi \sin(\pi x)$, $H^+\neq\emptyset$, and  the minimum of $\gamma/\beta$ is attached at $x=0.5$; right figure: $\beta=2$, $\gamma=14-4\pi\sin(4\pi x)$, $H^+\neq\emptyset$, and the minimum of $\gamma/\beta$ is attached at $x=1/8, 5/8$.}
     \label{fig2}
\end{figure}

\subsection{Standard incidence mechanism}
We then simulate the models with standard incidence mechanism.

\noindent  \textbf{Simulation 3: control the movement of susceptible people}.  Let $d_S=0$ and $d_I=1$. Firstly, choose  $\beta=1+\sin(\pi x)$ and $\gamma=1.5$ such that $ \int_{\Omega}(\beta-\gamma)dx={2}/{\pi}-0.5>0$, the high-risk sites are $H^+=(1/6, 5/6)$, the low-risk sites are $H^-=[0, 1/6)\cup (5/6, 1]$ and moderate-risk sites are $H^0=\{1/6, 5/6\}$. Since $H^-\neq\emptyset$, Theorem \ref{theorem_SL}-{\rm (i)} predicts that  infected population will be eliminated  and susceptible people occupy $H^-\cup H^0$. {Moreover since $\int_{\Omega}(\beta-\gamma)>0$, then $\mathcal{R}_0>1$ and Remark \ref{Remark1} suggests that the local size of the susceptible population maybe significantly low on some portion of the high-risk area,} which is confirmed by Figure \ref{fig30a}. Then choose $\beta=2.5+\sin(\pi x)$ and $\gamma=1.5+\sin(\pi x)$  such that  $\beta>\gamma$ and $I^*=N/\int_\Omega (\beta/(\beta-\gamma))dx\approx 1.1159$.  Theorem \ref{theorem_SL}-{\rm (ii)} predicts that $I(\cdot, t)\to I^*$ as $t\to\infty$, which is confirmed by Figure \ref{fig30b}.  Finally, choose $\beta=2-\sin(\pi x)$ and $\gamma=1$ such that $\beta\ge\gamma$, $H^0=\{0.5\}$, and $\int_\Omega 1/(\beta-\gamma)dx=\infty$. As shown in Figure \ref{fig31},  infected people are eliminated which agrees with Theorem \ref{theorem_stdDS}. Moreover,  susceptible people seem to concentrate near $H^0$, which we cannot prove.   Our theoretical results and simulations show that the disease may be controlled by limiting the movement of susceptible people if there exist low-risk or moderate-risk sites.


\begin{figure}
     \centering
     \begin{subfigure}[b]{0.45\textwidth}
         \centering
          \includegraphics[scale=.35]{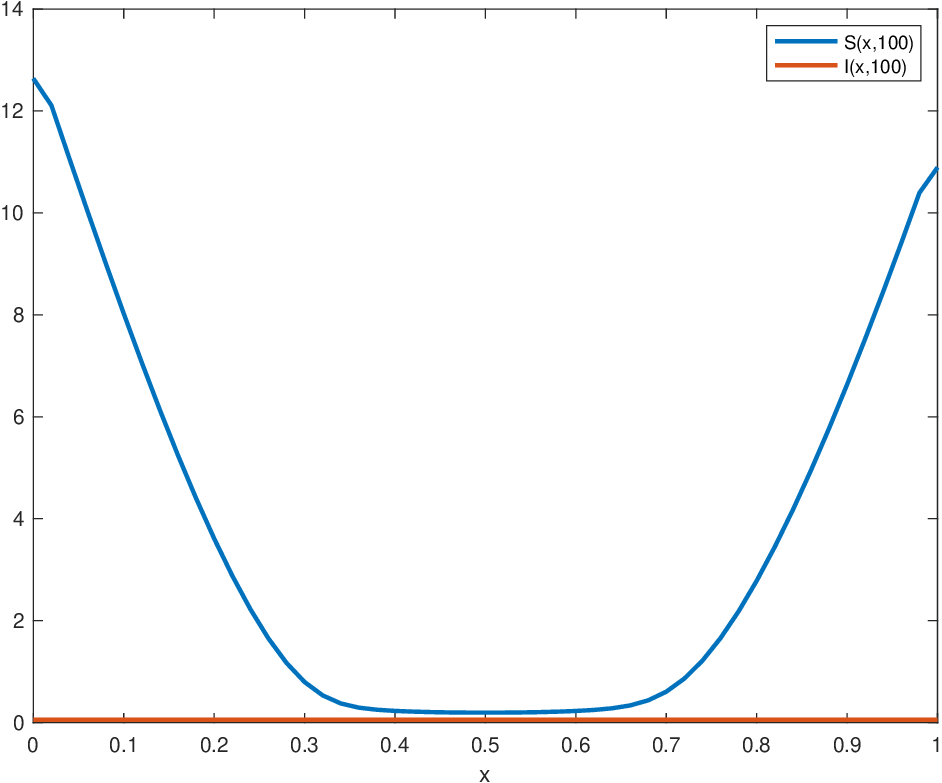}
         \caption{}
         \label{fig30a}
     \end{subfigure}
     \begin{subfigure}[b]{0.45\textwidth}
        \centering
     \includegraphics[scale=.35]{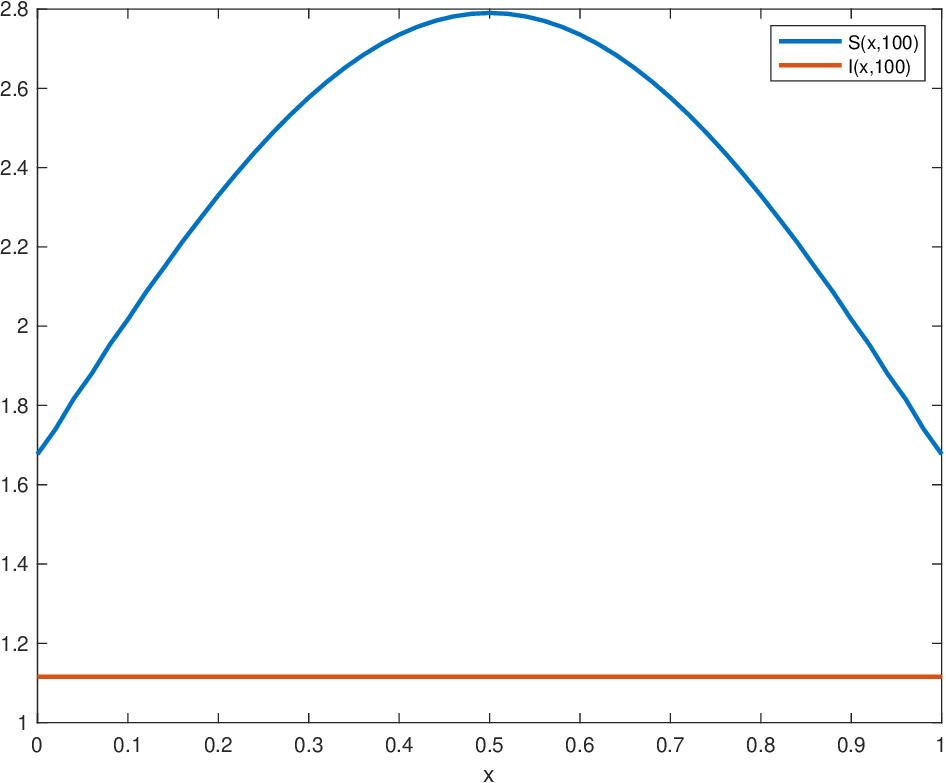}
             \caption{}
         \label{fig30b}
     \end{subfigure}
  
     \caption{Simulations of the model with standard incidence mechanism and $d_S=0, d_I=1$. Left figure: $\beta=1+\sin(\pi x)$ and $\gamma=1.5$ such that   $H^+=(1/6, 5/6)$,  $H^-=[0, 1/6)\cup (5/6, 1]$ and  $H^0=\{1/6, 5/6\}$; right figure: $\beta=2.5+\sin(\pi x)$ and $\gamma=1.5+\sin(\pi x)$  such that  $\beta>\gamma$ and $I^*=N/\int_\Omega (\beta/(\beta-\gamma))dx\approx 1.1159$.}
     \label{fig30}
\end{figure}

\begin{figure}
     \centering
     \begin{subfigure}[b]{0.3\textwidth}
         \centering
          \includegraphics[scale=.3]{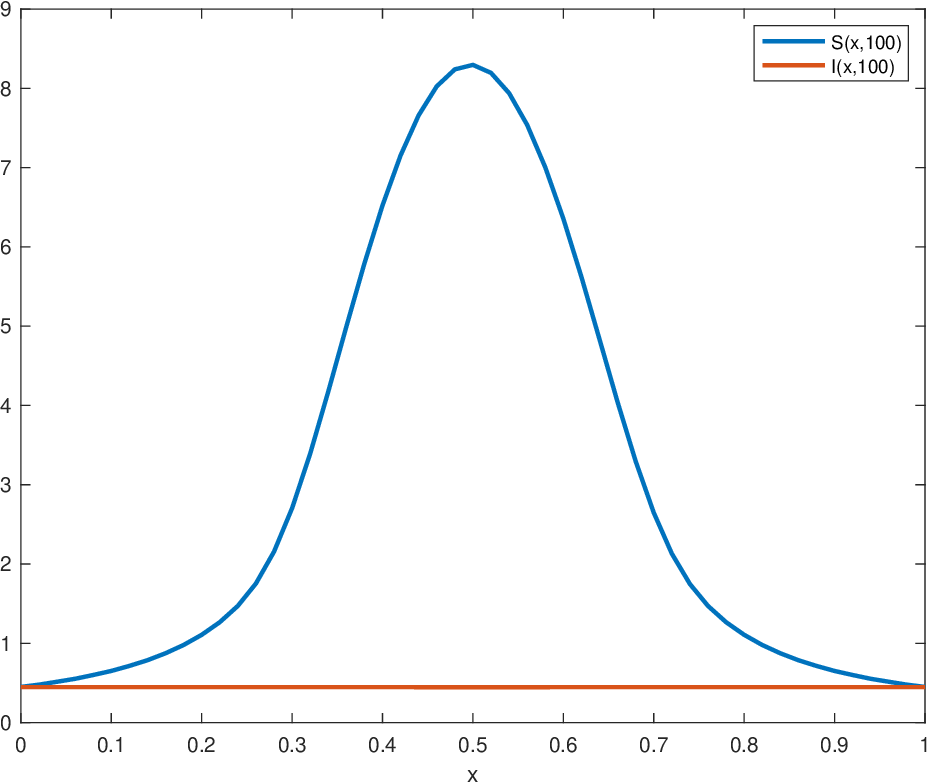}
         \caption{}
         \label{fig31a}
     \end{subfigure}
     \begin{subfigure}[b]{0.3\textwidth}
        \centering
     \includegraphics[scale=.3]{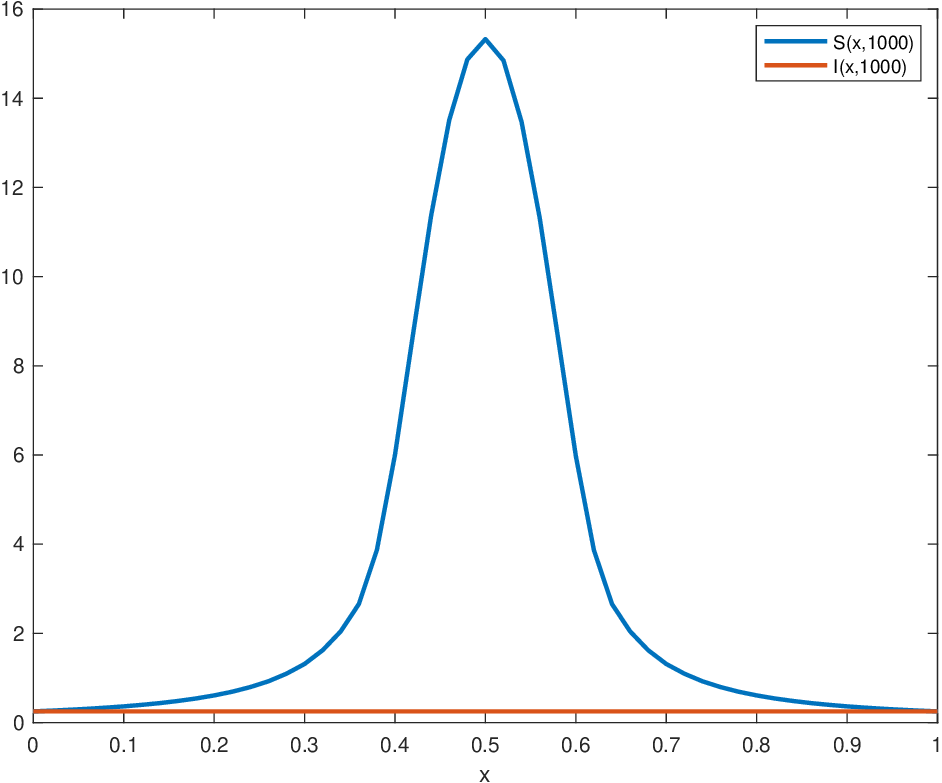}
             \caption{}
         \label{fig31b}
     \end{subfigure}
       \begin{subfigure}[b]{0.3\textwidth}
        \centering
     \includegraphics[scale=.3]{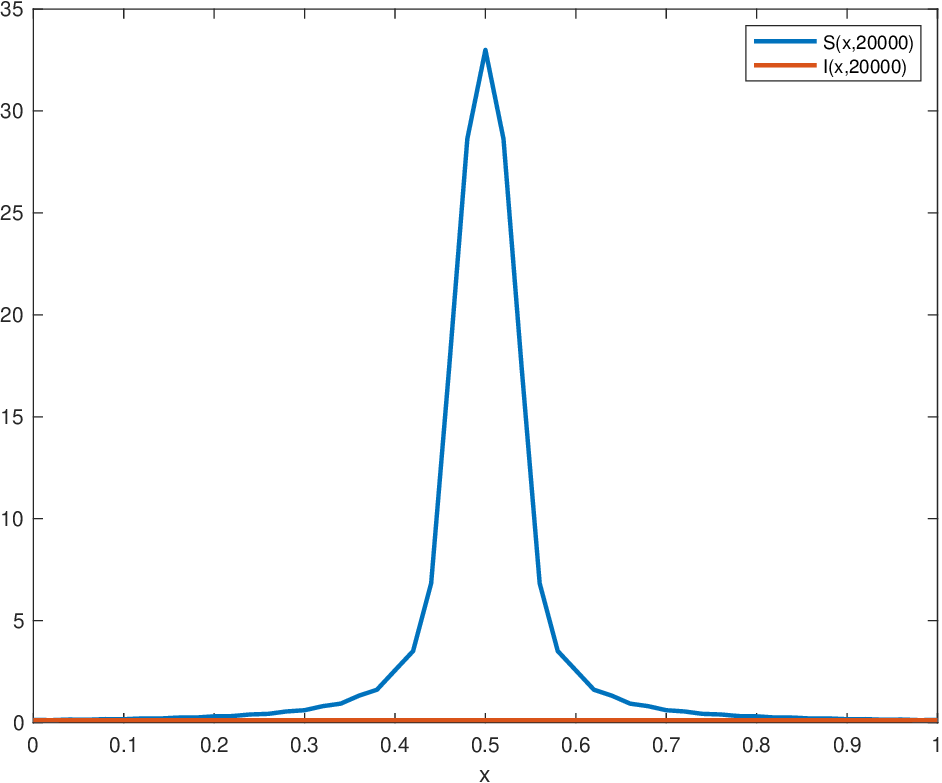}
             \caption{}
         \label{fig31c}
     \end{subfigure}
  
     \caption{Simulations of the model with standard incidence mechanism and $d_S=0, d_I=1$. Parameters: $\beta=2-\sin(\pi x)$ and $\gamma=1$ such that $\int_\Omega 1/(\beta-\gamma)dx=\infty$.}
     \label{fig31}
\end{figure}

  

\noindent  \textbf{Simulation 4: control the movement of infected people}.  Let $d_S=1$ and $d_I=0$. Firstly, choose $\beta=2-|x-0.5|^{0.5}$ and $\gamma=1.5$ such that  the high-risk sites are $H^+=(0.25, 0.75)$ and $\int_\Omega 1/|\beta-\gamma| dx<\infty$. As shown in Figure \ref{fig4a}, $S(\cdot, t)$ converges to a positive constant and $I(x, t)$ is positive when $x\in H^+$.  Then we choose $\beta=2-\sin(\pi x)$ and $\gamma=1.5$  such that  the high-risk sites are $H^+=(0, 1/6)\cup (5/6, 1)$ and $\int_\Omega 1/|\beta-\gamma| dx=\infty$. The infected people will live in $H^+$ as shown in Figure \ref{fig4b}. From the simulations, we can see that the infected people may be eliminated exactly at the low-risk sites by controlling the movement of infected people, which is predicted by Theorem \ref{theorem_stddi}. 

\begin{figure}
     \centering
     \begin{subfigure}[b]{0.45\textwidth}
         \centering         \includegraphics[scale=.35]{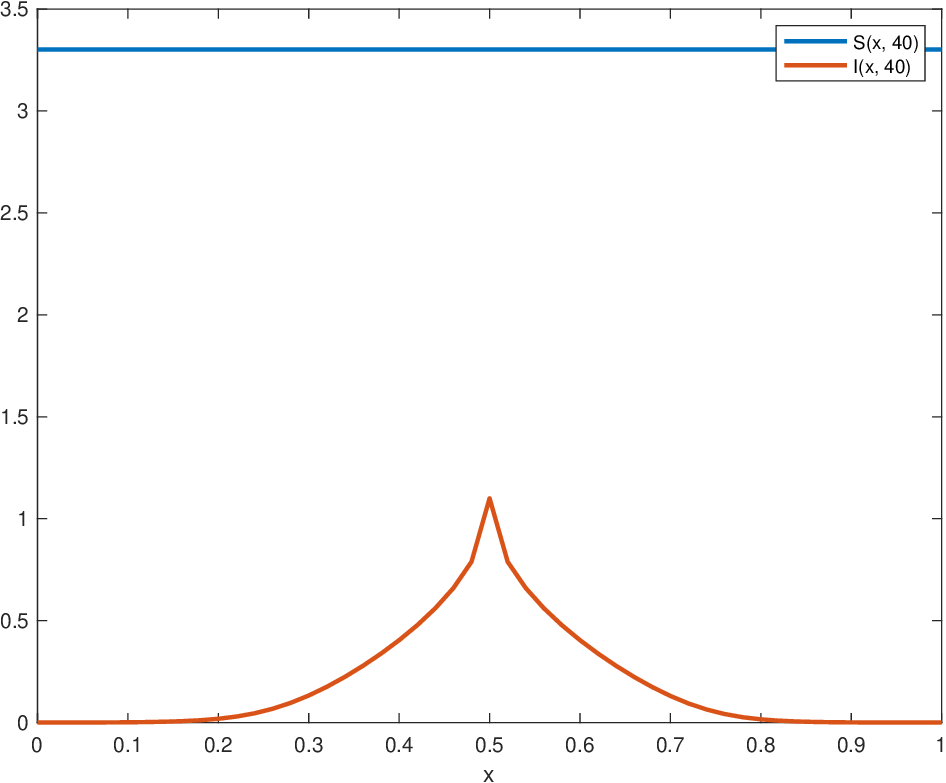}
         \caption{}
         \label{fig4a}
     \end{subfigure}
     \begin{subfigure}[b]{0.45\textwidth}
        \centering
     \includegraphics[scale=.35]{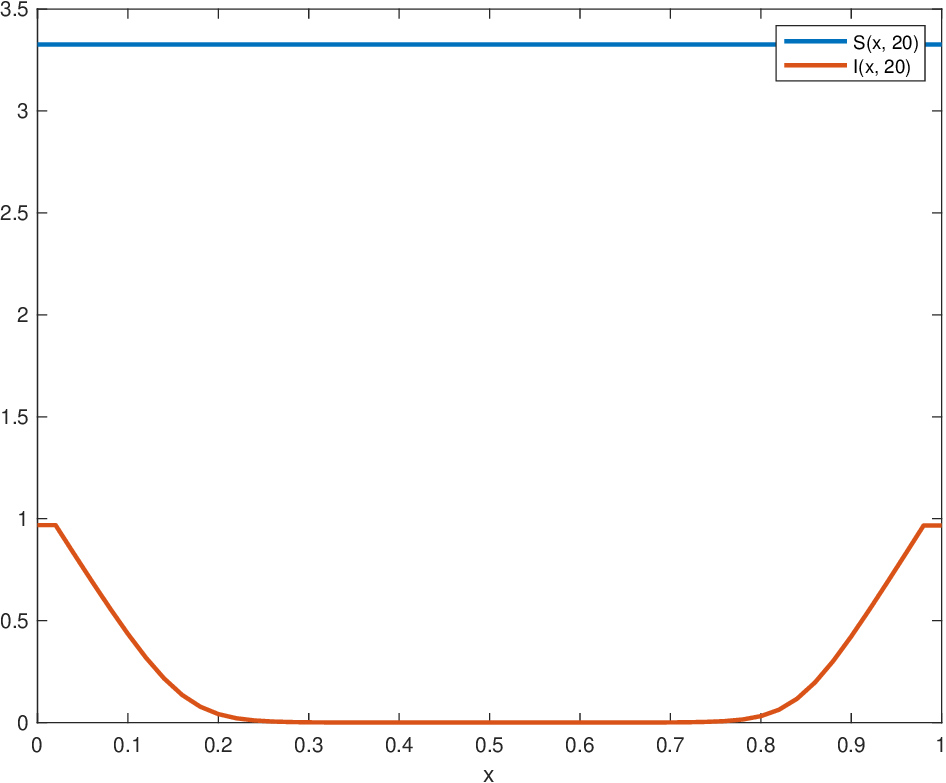}
             \caption{}
         \label{fig4b}
     \end{subfigure}
  
     \caption{Simulations of the model with standard incidence mechanism and $d_S=1, d_I=0$. Left figure: $\beta=2-|x-0.5|^{0.5}$ and $\gamma=1.5$ such that  the high-risk sites are $H^+=(0.25, 0.75)$ and $\int_\Omega 1/|\beta-\gamma| dx<\infty$.; right figure: $\beta=2-\sin(\pi x)$ and $\gamma=1.5$  such that  the high-risk sites are $H^+=(0, 1/6)\cup (5/6, 1)$ and $\int_\Omega 1/|\beta-\gamma| dx=\infty$.}
     \label{fig4}
\end{figure}

\section{Conclusions}
We studied the impact of limiting population movement on disease outbreak by examining the large time behavior of classical solutions of a class of epidemic models with  mass action  or  standard incidence transmission  mechanism. To this end,  we  set the  diffusion rate of the population subgroups (susceptible or infected)  to zero separately and presented  detailed mathematical analysis of the  corresponding degenerate epidemic models.  First, we established the existence and uniqueness of global classical solutions (see Propositions \ref{prop}, \ref{prop_ex1}, \ref{prop-incidence} and \ref{Pr1}). Next, we discussed the global dynamics of the solutions (Theorems \ref{theorem_massds}, \ref{theorem_S}, \ref{theorem_stdDS}, and \ref{theorem_stddi}). Finally, we conducted some numerical simulations to complement and illustrate our theoretical results (see Figures \ref{fig1}-\ref{fig4}).    Our results revealed the intricate effects of restricting population movement on disease dynamics and how  predictions may { depend} on the choices of transmission mechanisms and spatially heterogeneous parameters.

 First,  we considered  the global dynamics of the model with  mass action transmission mechanism. We defined a risk function $r:={\gamma}/{\beta}$ as the ratio of the recovery and  transmission rates.   When the susceptible population movement is restricted, Theorem \ref{theorem_massds} indicates that the average  population size, ${N}/{|\Omega|}$, compared to that of the  risk function, $\int_{\Omega}rdx/|\Omega|$,  largely determines the disease outcome. In particular,  regardless of the magnitude of the movement rate of the infected population, the disease may be eradicated only  if the average population size is smaller than the average  risk function. Moreover if the disease persists,  the infected population would  eventually be uniformly distributed across the whole habitat.  On the other hand, when the movement of the infected population is restricted,  Theorem \ref{theorem_S} suggests that the disease could be eradicated  only if the habitat does not have a high-risk area. Moreover, whether the magnitude of the movement rate of the susceptible population affects the results depends on whether the average size of the population is larger than  the risk function in at least  one location.   Furthermore when the disease persists, the infected population would concentrate on the highest-risk area of the habitat. Our simulations in Figures \ref{fig1}-\ref{fig2} illustrated the above theoretical results.

Next,  we studied  the global dynamics of the model with  standard incidence mechanism. Theorems \ref{theorem_SL} and \ref{theorem_stdDS}  indicate that restricting the susceptible population's movement could completely eradicate the disease  only if  the habitat  accommodates either a low-risk  or  moderate-risk area. These requirements are achieved if the local distribution of the risk function is less than or equal to one. However when the habitat consists of only  high-risk areas, the disease would persist  with the infected population being uniformly distributed across the whole habitat. On the other hand, when the movement of the infected population is  restricted, Theorem \ref{theorem_stddi} indicates that the disease may persist  if the habitat has a nonempty high-risk area. Moreover, the infected population precisely occupy the high-risk sites, and the magnitude of the movement of the susceptible subgroup does not  influence the disease persistence. Our simulations in Figures \ref{fig30}-\ref{fig4} illustrated these theoretical results.

 Our above theoretical results suggest that the transmission mechanisms play an important role when the population movement of one subgroup is restricted. Indeed, when mass action mechanism is used, the persistence of the disease depends on  the average population size compared to either the average of the risk function  (when $d_S=0$) or its local distribution (when $d_I=0$). However, when standard incidence  mechanism is adopted, the disease persistence prediction depends on whether the habitat has  only   high-risk areas (when $d_S=0$) or has a nonempty high-risk area (when $d_I=0$). Interestingly, our results suggest that for either mass action  or standard incidence mechanism  disease control strategies which focus on limiting the movement of the susceptible population should be preferred { over} those focusing on restricting the movement of the infected population. 
 
 Finally,  we point out that  most of the previous works (e.g., \cite{Allen,castellano2022effect,DengWu, Peng2009,WuZou})  considered  the asymptotic profiles of the EE solutions to understand the effects of limiting population movements on the dynamics of infectious diseases, under the assumptions that the evolution of the disease happens on a fast scale compared to control strategies and  populations ultimately stabilize at the EE solutions. In this work, we study the effects of limiting population movement by focusing on the global dynamics of the degenerate systems with either $d_S=0$ or $d_I=0$,  assuming instead that the control strategies happen on much faster scale than the evolution of the disease.  
 {  We remark that some of our results depend on the initial value. Other than that, the biological implications from these two approaches seem to align well (e.g., both approaches predict that the effectiveness of controlling the movement of susceptible people with mass action mechanism depends on the size of the total population).}

{\large\noindent{\bf Acknowledgement}}

\noindent  The authors would like to thank the reviewers for the suggestions that lead to an improvement of the paper.

{\large\noindent{\bf Declarations}}

\noindent{\bf Conflict of interest} The authors declare that they have no conflict of interest.

\bibliographystyle{plain}
\bibliography{epidem}

\begin{thebibliography}{10}

\bibitem{alikakos1979application}
N.~D. Alikakos.
\newblock An application of the invariance principle to reaction-diffusion
  equations.
\newblock {\em Journal of Differential Equations}, 33(2):201--225, 1979.

\bibitem{Allen}
L.~J.~S. Allen, B.~M. Bolker, Y.~Lou, and A.~L. Nevai.
\newblock Asymptotic profiles of the steady states for an {SIS} epidemic
  reaction-diffusion model.
\newblock {\em Discrete Contin. Dyn. Syst.}, 21(1):1--20, 2008.

\bibitem{Amann1983}
H.~Amann.
\newblock Dual semigroups and second order linear elliptic boundary value
  problem.
\newblock {\em Israel J. Math.}, 45:225--254, 1983.

\bibitem{1991book}
R.~M. Anderson and R.~M. May.
\newblock {\em Infectious diseases of humans: dynamics and control}.
\newblock Oxford University Press, 1991.

\bibitem{apostolopoulos2007population}
Y.~Apostolopoulos and Sevil~F. S{\"o}nmez.
\newblock {\em Population mobility and infectious disease}.
\newblock Springer, 2007.

\bibitem{brauer2019mathematical}
F.~Brauer, C.~Castillo-Chavez, Z.~Feng, et~al.
\newblock {\em Mathematical models in epidemiology}, volume~32.
\newblock Springer, 2019.

\bibitem{2008book}
F.~Brauer, P.~van~den Driessche, and J.~Wu, editors.
\newblock {\em Mathematical epidemiology}, volume 1945 of {\em Lecture Notes in
  Mathematics}.
\newblock Springer-Verlag, Berlin, 2008.
\newblock Mathematical Biosciences Subseries.

\bibitem{CC2003}
R.~S. Cantrell and C.~Cosner.
\newblock Spatial ecology via reaction-diffusion equations.
\newblock {\em Series in Mathematical and Computational Biology}, 2003.

\bibitem{castellano2022effect}
K.~Castellano and R.~B. Salako.
\newblock On the effect of lowering population's movement to control the spread
  of an infectious disease.
\newblock {\em Journal of Differential Equations}, 316:1--27, 2022.

\bibitem{ChenShi2020}
S.~Chen and J.~Shi.
\newblock Asymptotic profiles of basic reproduction number for epidemic
  spreading in heterogeneous environment.
\newblock {\em SIAM J. Appl. Math.}, 80(3):1247--1271, 2020.

\bibitem{CuiLamLou}
R.~Cui, K.-Y. Lam, and Y.~Lou.
\newblock Dynamics and asymptotic profiles of steady states of an epidemic
  model in advective environments.
\newblock {\em J. Differential Equations}, 263(4):2343--2373, 2017.

\bibitem{CuiLou}
R.~Cui and Y.~Lou.
\newblock A spatial {SIS} model in advective heterogeneous environments.
\newblock {\em J. Differential Equations}, 261(6):3305--3343, 2016.

\bibitem{DengWu}
K.~Deng and Y.~Wu.
\newblock Dynamics of a susceptible-infected-susceptible epidemic
  reaction-diffusion model.
\newblock {\em Proc. Roy. Soc. Edinburgh Sect. A}, 146(5):929--946, 2016.

\bibitem{Diekmann2000}
O.~Diekmann and J.~A.~P. Heesterbeek.
\newblock {\em Mathematical epidemiology of infectious diseases. Model
  building, analysis and interpretation}.
\newblock Wiley Series in Mathematical and Computational Biology. John Wiley \&
  Sons, Ltd., Chichester, 2000.

\bibitem{DomateSalako2022}
J.~T. Doumat\`e and R.~B. Salako.
\newblock Asymptotic behavior of solutions of an ode-pde hybrid competition
  system.
\newblock {\em Journal of Differential Equations}, 334:216--255, 2022.

\bibitem{EfendievOtaniEberl2022}
M.~A. Efendiev, M.~Otani, and H.~J. Eberl.
\newblock Approaching the ideal free distribution in two-species competition
  models with fitness-dependent dispersal.
\newblock {\em Journal of Differential Equations}, 339:602--636, 2022.

\bibitem{Fitz2008}
W.~E. Fitzgibbon and M.~Langlais.
\newblock Simple models for the transmission of microparasites between host
  populations living on noncoincident spatial domains.
\newblock In {\em Structured population models in biology and epidemiology},
  volume 1936 of {\em Lecture Notes in Math.}, pages 115--164. Springer,
  Berlin, 2008.

\bibitem{fitzgibbon2021diffusive}
W.~E. Fitzgibbon, J.~J. Morgan, G.~F. Webb, and Y.~Wu.
\newblock A diffusive seir model for community transmission of covid-19
  epidemics: application to brazil.
\newblock {\em Mathematics in Applied Sciences and Engineering}, 2(4):290--309,
  2021.

\bibitem{hagenaars2004spatial}
T.~J. Hagenaars, C.~A. Donnelly, and N.~M. Ferguson.
\newblock Spatial heterogeneity and the persistence of infectious diseases.
\newblock {\em Journal of Theoretical Biology}, 229(3):349--359, 2004.

\bibitem{DanHenry}
D.~Henry.
\newblock Geometric theory of semilinear parabolic equations.
\newblock 1981.

\bibitem{huska2006harnack}
J.~H{\'u}ska.
\newblock Harnack inequality and exponential separation for oblique derivative
  problems on lipschitz domains.
\newblock {\em Journal of Differential Equations}, 226(2):541--557, 2006.

\bibitem{Jiang}
D.~Jiang, Z.~Wang, and L.~Zhang.
\newblock A reaction-diffusion-advection {SIS} epidemic model in a
  spatially-temporally heterogeneous environment.
\newblock {\em Discrete Contin. Dyn. Syst. Ser. B}, 23(10):4557--4578, 2018.

\bibitem{kallen1985simple}
A.~K{\"a}ll{\'e}n, P.~Arcuri, and J.~D. Murray.
\newblock A simple model for the spatial spread and control of rabies.
\newblock {\em Journal of theoretical biology}, 116(3):377--393, 1985.

\bibitem{kermack1927contribution}
W.~O. Kermack and A.~G. McKendrick.
\newblock A contribution to the mathematical theory of epidemics.
\newblock {\em Proceedings of the royal society of london. Series A, Containing
  papers of a mathematical and physical character}, 115(772):700--721, 1927.

\bibitem{kevrekidis2021reaction}
P.~G. Kevrekidis, J.~Cuevas-Maraver, Y.~Drossinos, Z.~Rapti, and G.~A.
  Kevrekidis.
\newblock Reaction-diffusion spatial modeling of covid-19: Greece and andalusia
  as case examples.
\newblock {\em Physical Review E}, 104(2):024412, 2021.

\bibitem{KuoPeng}
K.~Kuto, H.~Matsuzawa, and R.~Peng.
\newblock Concentration profile of endemic equilibrium of a
  reaction-diffusion-advection {SIS} epidemic model.
\newblock {\em Calc. Var. Partial Differential Equations}, 56(4):112, 2017.

\bibitem{lewis2006traveling}
M.~Lewis, J.~Renc{\l}awowicz, and P~van den Driessche.
\newblock Traveling waves and spread rates for a west nile virus model.
\newblock {\em Bulletin of mathematical biology}, 68:3--23, 2006.

\bibitem{LiPeng}
H.~Li and R.~Peng.
\newblock Dynamics and asymptotic profiles of endemic equilibrium for {SIS}
  epidemic patch models.
\newblock {\em J. Math. Biol.}, 79(4):1279--1317, 2019.

\bibitem{Li2018}
H.~Li, R.~Peng, and Z.~Wang.
\newblock On a diffusive susceptible-infected-susceptible epidemic model with
  mass action mechanism and birth-death effect: analysis, simulations, and
  comparison with other mechanisms.
\newblock {\em SIAM J. Appl. Math.}, 78(4):2129--2153, 2018.

\bibitem{li2020dynamics}
H.~Li, R.~Peng, and T.~Xiang.
\newblock Dynamics and asymptotic profiles of endemic equilibrium for two
  frequency-dependent {S}{I}{S} epidemic models with cross-diffusion.
\newblock {\em European Journal of Applied Mathematics}, 31(1):26--56, 2020.

\bibitem{lin2017spatial}
Z.~Lin and H.~Zhu.
\newblock Spatial spreading model and dynamics of west nile virus in birds and
  mosquitoes with free boundary.
\newblock {\em Journal of Mathematical Biology}, 75:1381--1409, 2017.

\bibitem{lloyd1996spatial}
A.~L. Lloyd and R.~M. May.
\newblock Spatial heterogeneity in epidemic models.
\newblock {\em Journal of Theoretical Biology}, 179(1):1--11, 1996.

\bibitem{LouSalako2020}
Y.~Lou and R.~B. Salako.
\newblock Dynamics of a parabolic-ode competition system in heterogeneous
  environments.
\newblock {\em Proc. Amer. Math. Soc}, 148(7):025--3038, 2020.

\bibitem{LouSalako2021}
Y.~Lou and R.~B. Salako.
\newblock Control strategy for multi-strains epidemic model.
\newblock {\em Bulletin of Mathematical Biology}, 84(10):1--47, 2022.

\bibitem{LSS2023}
Y.~Lou, R.~B. Salako, and P.~Song.
\newblock Human mobility and disease prevalence.
\newblock {\em Journal of Mathematical Biology}, 87(1):1--32, 2023.

\bibitem{LouTaoWinkler2014}
Y.~Lou, Y.~Tao, and M.~Winkler.
\newblock Approaching the ideal free distribution in two-species competition
  models with fitness-dependent dispersal.
\newblock {\em SIAM Journal on Mathematical Analysis}, 42(2):1228--1262, 2014.

\bibitem{LouWinkler2020}
Y.~Lou and M.~Winkler.
\newblock Advantage and disadvantage of dispersal in two-species competition
  models.
\newblock {\em CSIAM Trans. Appl. Math}, 1(1):86--103, 2020.

\bibitem{lou2010periodic}
Y.~Lou and X.-Q. Zhao.
\newblock The periodic ross--macdonald model with diffusion and advection.
\newblock {\em Applicable Analysis}, 89(7):1067--1089, 2010.

\bibitem{lou2011reaction}
Y.~Lou and X.-Q. Zhao.
\newblock A reaction--diffusion malaria model with incubation period in the
  vector population.
\newblock {\em Journal of mathematical biology}, 62(4):543--568, 2011.

\bibitem{magal2020spatial}
P.~Magal, G.~F. Webb, and Y.~Wu.
\newblock Spatial spread of epidemic diseases in geographical settings:
  Seasonal influenza epidemics in puerto rico.
\newblock {\em Discrete \& Continuous Dynamical Systems-Series B}, 25(6), 2020.

\bibitem{martcheva2015introduction}
M.~Martcheva.
\newblock {\em An introduction to mathematical epidemiology}, volume~61.
\newblock Springer, 2015.

\bibitem{murray1986spatial}
J.~D. Murray, E.~A. Stanley, and D.~L. Brown.
\newblock On the spatial spread of rabies among foxes.
\newblock {\em Proceedings of the Royal society of London. Series B. Biological
  sciences}, 229(1255):111--150, 1986.

\bibitem{OnyidoSalako2023}
M.~A. Onyido and R.~B. Salako.
\newblock Asymptotic dynamics of a three-species hybrid competition system.
\newblock {\em Journal of Dynamics and Differential Equations}, 2023.

\bibitem{pazy2012semigroups}
A.~Pazy.
\newblock {\em Semigroups of linear operators and applications to partial
  differential equations}, volume~44.
\newblock Springer Science \& Business Media, 2012.

\bibitem{Peng2009}
R.~Peng.
\newblock Asymptotic profiles of the positive steady state for an {SIS}
  epidemic reaction-diffusion model. {I}.
\newblock {\em J. Differential Equations}, 247(4):1096--1119, 2009.

\bibitem{PengLiu}
R.~Peng and S.~Liu.
\newblock Global stability of the steady states of an {SIS} epidemic
  reaction-diffusion model.
\newblock {\em Nonlinear Anal.}, 71(1-2):239--247, 2009.

\bibitem{peng2023novel}
R.~Peng, Z.-A. Wang, G.~Zhang, and M.~Zhou.
\newblock Novel spatial profiles of population distribution of two diffusive
  {S}{I}{S} epidemic models with mass action infection mechanism and small
  movement rate for the infected individuals.
\newblock {\em J. Math. Biol.}, 87(81), 2023.

\bibitem{peng2021global}
R.~Peng and Y.~Wu.
\newblock Global $l^\infty$-bounds and long-time behavior of a diffusive
  epidemic system in a heterogeneous environment.
\newblock {\em SIAM Journal on Mathematical Analysis}, 53(3):2776--2810, 2021.

\bibitem{Peng2013}
R.~Peng and F.~Yi.
\newblock Asymptotic profile of the positive steady state for an {SIS} epidemic
  reaction-diffusion model: effects of epidemic risk and population movement.
\newblock {\em Phys. D}, 259:8--25, 2013.

\bibitem{PengZhao}
R.~Peng and X.-Q. Zhao.
\newblock A reaction-diffusion {SIS} epidemic model in a time-periodic
  environment.
\newblock {\em Nonlinearity}, 25(5):1451--1471, 2012.

\bibitem{setti2020potential}
L.~Setti, F.~Passarini, G.~De~Gennaro, et~al.
\newblock Potential role of particulate matter in the spreading of covid-19 in
  northern italy: first observational study based on initial epidemic
  diffusion.
\newblock {\em BMJ open}, 10(9):e039338, 2020.

\bibitem{stoddard2009role}
S.~T. Stoddard, A.~C. Morrison, G.~M. Vazquez-Prokopec, and \emph{et al.}
\newblock The role of human movement in the transmission of vector-borne
  pathogens.
\newblock {\em PLoS Neglected Tropical Diseases}, 3(7):e481, 2009.

\bibitem{takahashi2005mathematical}
L.~T. Takahashi, N.~A. Maidana, W.~C. Ferreira, P.~Pulino, and H.~M. Yang.
\newblock Mathematical models for the aedes aegypti dispersal dynamics:
  travelling waves by wing and wind.
\newblock {\em Bulletin of mathematical Biology}, 67:509--528, 2005.

\bibitem{tao2023analysis}
Y.~Tao and M.~Winkler.
\newblock Analysis of a chemotaxis-{S}{I}{S} epidemic model with unbounded
  infection force.
\newblock {\em Nonlinear Analysis: Real World Applications}, 71:103820, 2023.

\bibitem{tatem2006global}
A.~J. Tatem, D.~J. Rogers, and S.~I. Hay.
\newblock Global transport networks and infectious disease spread.
\newblock {\em Advances in Parasitology}, 62:293--343, 2006.

\bibitem{Tuncer2012}
N.~Tuncer and M.~Martcheva.
\newblock Analytical and numerical approaches to coexistence of strains in a
  two-strain {SIS} model with diffusion.
\newblock {\em J. Biol. Dyn.}, 6(2):406--439, 2012.

\bibitem{viguerie2020diffusion}
A.~Viguerie, A.~Veneziani, G.~Lorenzo, D.~Baroli, N.~Aretz-Nellesen, A.~Patton,
  T.~E. Yankeelov, A.~Reali, T.~JR Hughes, and F.~Auricchio.
\newblock Diffusion--reaction compartmental models formulated in a continuum
  mechanics framework: application to covid-19, mathematical analysis, and
  numerical study.
\newblock {\em Computational Mechanics}, 66:1131--1152, 2020.

\bibitem{WangZhao}
W.~Wang and X.-Q. Zhao.
\newblock Basic reproduction numbers for reaction-diffusion epidemic models.
\newblock {\em SIAM J. Appl. Dyn. Syst.}, 11(4):1652--1673, 2012.

\bibitem{webb1981reaction}
G.~F. Webb.
\newblock A reaction-diffusion model for a deterministic diffusive epidemic.
\newblock {\em Journal of Mathematical Analysis and Applications},
  84(1):150--161, 1981.

\bibitem{wen2018asymptotic}
X.~Wen, J.~Ji, and B.~Li.
\newblock Asymptotic profiles of the endemic equilibrium to a diffusive
  {S}{I}{S} epidemic model with mass action infection mechanism.
\newblock {\em Journal of Mathematical Analysis and Applications},
  458(1):715--729, 2018.

\bibitem{winkler2010aggregation}
M.~Winkler.
\newblock Aggregation vs. global diffusive behavior in the higher-dimensional
  keller--segel model.
\newblock {\em Journal of Differential Equations}, 248(12):2889--2905, 2010.

\bibitem{WuZou}
Y.~Wu and X.~Zou.
\newblock Asymptotic profiles of steady states for a diffusive {SIS} epidemic
  model with mass action infection mechanism.
\newblock {\em J. Differential Equations}, 261(8):4424--4447, 2016.

\bibitem{wu2018dynamics}
Y.~Wu and X.~Zou.
\newblock Dynamics and profiles of a diffusive host--pathogen system with
  distinct dispersal rates.
\newblock {\em Journal of Differential Equations}, 264(8):4989--5024, 2018.

\end{thebibliography}

\end{document}